\newtheorem{theorem}{Theorem}[section]
\newtheorem{cor}[theorem]{Corollary}
\newtheorem{lemma}[theorem]{Lemma}                                                                                                                                                                                                                                                                             
\newtheorem{definition}{Definition}
\newtheorem{remark}{Remark}
\def\ep{\epsilon}
\newcommand{\Bp}{\mathbf{p}}
\newcommand{\Bc}{\mathbf{c}}
\newcommand{\Be}{\mathbf{e}}
\newcommand{\Om}{\Omega}
\newcommand{\Bs}{\mathbf{s}}
\newcommand{\Bx}{\mathbf{x}}
\newcommand{\By}{\mathbf{y}}
\newcommand{\RR}{\mathbb{R}}
\newcommand{\CC}{\mathbb{C}}
\newcommand{\NN}{\mathbb{N}}
\newcommand{\Scal}{\mathcal{S}}
\newcommand{\Kcal}{\mathcal{K}}
\newcommand{\p}{\partial}
\newcommand{\pd}[2]{\frac {\p #1}{\p #2}}
\newcommand{\ds}{\displaystyle}
\newcommand{\eqnref}[1]{(\ref {#1})}
\renewcommand{\qed}{\hfill $\Box$ \medskip}
\newcommand{\beq}{\begin{equation}}
\newcommand{\eeq}{\end{equation}}
\newcommand{\RN}[1]{%
  \textup{\uppercase\expandafter{\romannumeral#1}}%
}
\numberwithin{equation}{section}
\numberwithin{figure}{section}
\begin{document}

\newcommand{\TheTitle}{Electric field concentration in the presence of an inclusion with eccentric core-shell geometry}
\newcommand{\TheAuthors}{J. Kim and M. Lim}

\title{{\TheTitle}\thanks{{This work is supported by the Korean Ministry of Science, ICT and Future Planning through NRF grant No. 2016R1A2B4014530 (to M.L. and J.K.).}}}
\author{
Junbeom Kim\thanks{\footnotesize Department of Mathematical Sciences, Korea Advanced Institute of Science and Technology, Daejeon 34141, Korea ({kjb2474@kaist.ac.kr}, {mklim@kaist.ac.kr}).}\footnotemark[2] \and Mikyoung Lim\footnotemark[2]}

\maketitle

\begin{abstract}
In this paper we analyze the gradient blow-up of the solution to the conductivity problem in two dimensions in the presence of an inclusion with eccentric core-shell geometry. Assuming that the core and shell have circular boundaries that are nearly touching, we derive an asymptotic formula for the solution in terms of the single and double layer potentials with image line charges. We also deduce an integral formula with image line charges for the problem relating to two nearly touching separated conductors. 
\end{abstract}

\noindent {\footnotesize {\bf Mathematics subject classification
(MSC2000): 35J25, 73C40} }

\noindent{\footnotesize {\bf Key words}. Gradient blow-up, Conductivity equation, Bipolar coordinates, Anti-plane elasticity, Core-shell geometry, Image charge}

\tableofcontents

\section{Introduction}
We consider the conductivity problem in the presence of an inclusion with eccentric core-shell geometry which consists of a core and a surrounding shell, where the core and shell have circular boundaries that are nearly touching. Specifically, we let $B_i$ and $B_e$ be two disks such that $B_e$ contains $B_i$, as drawn in Figure \ref{fig:geometry}, where the core $B_i$ and shell $B_e\setminus B_i$ have the constant conductivities $k_i$ and $k_e$ satisfying $0<k_i,k_e\neq 1<\infty$. The aim of the paper is to derive an asymptotic formula for the solution to the conductivity problem
\begin{equation} \label{Eqn}
\left\{ \begin{array}{ll}
\ds \nabla \cdot \sigma \nabla u = 0 \qquad& \textrm{in } \mathbb{R}^2, \\
\ds u(\bold{x}) - H(\bold{x}) = O(|\bold{x}|^{-1}) \qquad& \textrm{as } |\bold{x}| \to \infty,
\end{array} \right.
\end{equation}
as the distance between $\p B_i$ and $\p B_e$ degenerates to zero.
Here, $\sigma$ denotes the conductivity profile $\sigma = k_i \chi(B_i) + k_e \chi(B_e \setminus B_i) + \chi(\mathbb{R}^2 \setminus B_e)$, $H$ is an entire harmonic function, and $\chi(D)$ represents the characteristic function of a set $D$.

The electrostatic interaction problem has been studied for a long time, including \cite{Max}.
For two separated spheres, the series expression by separation of variables in bispherical coordinates and the method of image charges were developed \cite{Dav,Jef,Smy}. The electric field may blow up as the distance tends toward zero \cite{keller}, and it causes difficulty in the numerical computation. There have been many attempts to overcome this difficulty and to compute the asymptotic behavior of the electric field \cite{Bat,keller,McP2,McP,Pol,Pol2}.
For two dimensions, this problem can be also considered in relation to the stress concentration in composite materials. 
In composite structures, which consist of a matrix and inclusions, it is common for inclusions to be closely located. 
Having extreme material parameters is a necessary condition for inclusions to induce the electric field or the stress tensor blow-up. Li and Vogelius showed that the electric field is bounded if the inclusions' conductivities are finite and strictly positive \cite{LV}, and Li and Nirenberg extended this result to elliptic systems \cite{LN}.

In recent years, the gradient blow-up feature of the electric field in the small area between the inclusions' boundaries has attracted much attention due to its application to various imaging modalities, for example \cite{lassiter2008close,Rom}. In particular, the electric field concentration in the presence of two nearly touching separated inclusions has been intensively studied. We denote by $\ep$ the distance between the inclusions. The generic blow-up rate for conductors is $\ep^{-1/2}$ in two dimensions \cite{AKL,bab,BC,BT,keller,Y,Y2}, and $|\ep\ln\ep|^{-1}$ in three dimensions \cite{BLY,BLY2,LY}. 
In two dimensions, Ammari et al. deduced an optimal bound for two disks with arbitrary constant conductivities \cite{AKLLL,AKL}, and Yun obtained the blow-up rate for perfect conductors of general shapes \cite{Y,Y2}.
The asymptotic behavior of the gradient blow-up solution was studied for perfect conductors of circular shapes by Kang et al. \cite{KLY}, and of {arbitrary shapes by Ammari et al. \cite{ACKLY} and Kang et al. \cite{KLeeY}. For disks with arbitrary constant conductivities, Lim and Yu obtained an asymptotic formula for the electric field \cite{LYu2D}. The blow-up feature for perfect conductors of a bow-tie structure was investigated by Kang and Yun in \cite{kang2017optimal}.
	In three dimensions, the blow-up rate for perfect conductors of general shapes was obtained by Bao et al. \cite{BLY,BLY2}. For perfect conductors of spherical shapes, Ammari et al. showed that the potential difference in the conductors is necessary for the gradient blow-up \cite{ammari2007estimates}, an optimal bound for the electric field concentration was obtained by Lim and Yun \cite{LY}, and the asymptotic feature of the blow-up solution was investigated by Kang et al. \cite{KLY2} and by Lim and Yu \cite{LYu3D}. Lekner calculated the field enhancement rate by using the bispherical coordinates \cite{lekner11}. Recently, Ammari and Yu developed a numerical method to compute the blow-up solution \cite{AmmariYu}.
	The blow-up rate for insulating inclusions is the same as perfectly conducting ones in two dimensions, but is different in three dimensions. Yun showed that the rate in three dimensions is $1/\ep^{\frac{2-\sqrt{2}}{2}}$ \cite{Y3}.
	For the elasticity problem, Bao et al. obtained an upper bound in two dimensions \cite{bao2015gradient}, and this result was generalized to higher dimensions \cite{bao2017gradient}. Recently, the gradient blow-up term of the solution in two dimensions was verified for {convex hard inclusions} by Kang and Yu \cite{kang2017quantitative} and for circular holes by Lim and Yu \cite{lim2017stress}.

	An inclusion with core-shell geometry can also induce the gradient blow-up for the solution to the conductivity problem as the distance between the boundaries of the core and shell degenerates to zero. 
	In two dimensions, Ammari et al. obtained the blow-up rate $\ep^{-1/2}$ for core-shell geometry with circular boundaries \cite{AKLLL}.
	Recently, Li et al. obtained an optimal bound for inclusions of general shapes in two dimensions and higher \cite{li2017optimal}, and Bao et al. extended the result to the linear elasticity problem \cite{bao2017optimal}.
	It is worth mentioning that a plasmonic structure of core-shell geometry induces the shielding effect on a region away from the structure \cite{YL17}.

	Our goal in this paper is to characterize the electric field concentration for core-shell geometry with circular boundaries. We first show that only the linear term in $H(\Bx)$ contributes to the gradient blow-up. We then show that the gradient blow-up happens only in the shell area $B_e\setminus B_i$. We finally express $u$ by the single and double layer potentials with image line charges as follows: for $\Bx\in B_e\setminus B_i$,
	\begin{align*}
	&	u(\Bx)-H(\Bx)\\
	&=  \frac{C_x r_*^2}{2} \left[  \int_{((-\infty,0), \bold{p_1}]} \ln|\Bx-\Bs|\varphi_1(\Bs) \,d\sigma(\Bs) -\int_{[\bold{p}_2,\bold{c_i}]} \ln|\Bx-\Bs| \varphi_2(\Bs)\, d\sigma(\Bs)\right] \\ 
	& + \frac{C_y r_*^2}{2} \left[ \int_{((-\infty,0), \bold{p_1}]} \frac{ \langle \Bx-\Bs, \Be_2\rangle}{|\Bx-\Bs|^2} \psi_1(\Bs)\, d\sigma(\Bs)  -\int_{[\bold{p}_2,\bold{c_i}]} \frac{ \langle \Bx-\Bs, \Be_2 \rangle}{|\Bx-\Bs|^2} \psi_2(\Bs)\, d\sigma(\Bs)\right]  + r(\Bx),
	\end{align*}
	where $C_x,\, C_y,\, r_*^2$ are some constants depending on the background potential function $H$ and the geometry of the inclusion, and $|\nabla r|$ is uniformly bounded independently of $\ep,\, \Bx$. The points $\Bp_1,\Bp_2$ and $\Bc_i$ are defined according to $B_i$ and $B_e$; see Theorem \ref{asymptotic} for further details. 
	In the derivation, we use the bipolar coordinate system due to its convenience to solve the conductivity problem in the presence of two circular interfaces. 
	We also deduce an integral expression with image line charges for the problem relating to two separated disks.

	The paper is organized as follows. In section \ref{sec:bipolar}, we explain the core-shell geometry and the associated bipolar coordinate system. 
	We decompose the solution $u$ into a singular term and a regular term in section \ref{sec:series_bipolar} and derive an asymptotic formula for the solution in terms of bipolar coordinates in section \ref{sec:bipolarapprox}. 
	Section \ref{sec:cartesian} is about the image line charge formula, and we conclude with some discussion.

	\section{The bipolar coordinate system} \label{sec:bipolar}

	\subsection{Geometry of the core-shell structure}
	We let $B_i$ and $B_e$ be the disks centered at $\Bc_i$ and $\Bc_e$ with the radii $r_i$ and $r_e$, respectively. As stated before, we assume that $B_e$ contains $B_i$ and that the distance between the boundary circles, say $\ep=\mbox{dist}(\p B_i, \p B_e)$, is small. After appropriate shifting and rotation, the disk centers are $\Bc_i=(c_i,0)$ and $\Bc_e=(c_e,0)$ with
	\beq \label{disks:center}
	\ds c_i = \frac{r_e ^2 - r_i ^2 - (r_e - r_i - \epsilon)^2}{2(r_e-r_i-\epsilon)}, \quad
	\ds c_e = c_i +r_e- r_i - \epsilon .
	\eeq
	Note that $r_e>r_i>0$ and $c_e>c_i>0$ as shown in Figure \ref{fig:geometry}.
	The closest points on $\p  B_i$ and $\p B_e$ between $\p B_i$ and $\p B_e$ are, respectively, $\Bx_i=(x_i,0)$ and $\Bx_e=(x_e,0)$, with
	\begin{align*}
	x_i =c_i-r_i=\frac{2\ep r_e-\ep^2}{2(r_e-r_i-\ep)},\quad
	x_e=c_e-r_e=\frac{2\ep r_i+\ep^2}{2(r_e-r_i-\ep)}.
	\end{align*}
	Note that $x_i,x_e=O(\ep)$. 
	\begin{figure}[!h]
		\begin{center}
			\includegraphics[width=6.5cm]{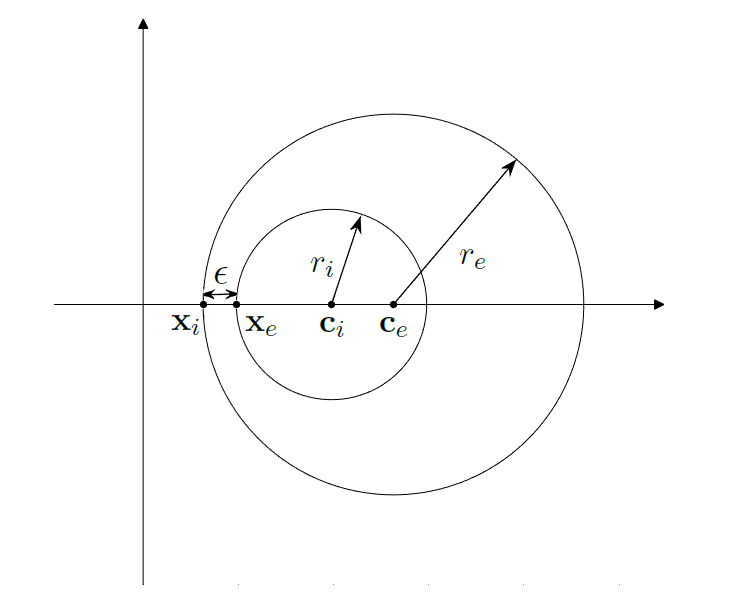}\hskip 1cm
			\includegraphics[width=6.5cm]{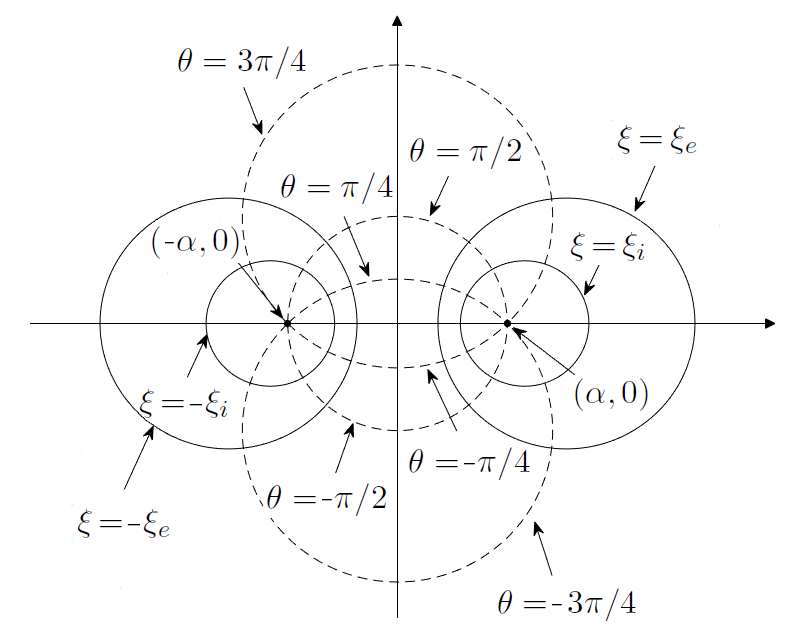}
			\caption{An inclusion with eccentric core-shell geometry (left) and the coordinate level curves of associated bipolar coordinates (right).}\label{fig:geometry}
		\end{center}
	\end{figure}

	For a disk $D$ centered at $\bold{c}$ with radius $r$, the reflection of $\Bx=(x,y)\in\RR^2$ across $\p D$ is
	$$
	R_{\p D} (\bold{x}) = \frac{r^2 (\bold{x}-\bold{c})}{|\bold{x}-\bold{c}|^2}+\bold{c}.
	$$
	We also define $R_{\p D}(z)\in\CC$ for $z=x+\rm{i}y$ such that $R_{\p D}(\Bx)=(\Re\{R_{\p D} (z)\},\Im\{R_{\p D} (z)\})$. Here the symbols $\Re\{w\}$ and $\Im\{w\}$ indicate the real and imaginary parts of a complex number $w$, respectively.
	We then define the reflection of a function $f$ across $\p D$ as 
	$$R_{\p D}[f] (\bold{x}) = f\left( R_{\p D}(\bold{x})\right)\qquad\mbox{for }\Bx\in\RR^2,$$ and the combined reflections as
	$$R_{\p D_1}R_{\p D_2}\cdots R_{\p D_n}[f](\Bx)=f\left((R_{\p D_n}\circ R_{\p D_{n-1}}\cdots \circ R_{\p D_1})(\Bx)\right).$$

	Let $R_i$ and $R_e$ denote the reflections across the circles $\p B_i$ and $\p B_e$, respectively. Then, one can easily show that the combined reflections $R_i \circ R_e$ and $R_e \circ R_i$  have the same fixed points $\Bp_1 = (-\alpha,0)$ and $\Bp_2=(\alpha,0)$ with
	\beq \label{defalp}
	\alpha = \frac{\sqrt{\epsilon(2r_i+\epsilon)(2r_e-\epsilon)(2r_e-2r_i-\epsilon)}}{2(r_e-r_i-\epsilon)}.
	\eeq
	One can also easily show that
	\beq
	\alpha= r_* \sqrt{ \epsilon} + O(\epsilon \sqrt{\epsilon}),\quad \mbox{where }
	r_* = \sqrt{\frac{2r_i r_e}{r_e-r_i}}.\label{def:rstar}
	\eeq

	The boundaries of $B_i$ and $B_e$ are then the coordinate level curves $\{\xi= \xi_i\}$ and $\{\xi = \xi_e\}$, respectively, in the bipolar coordinate system $(\xi,\theta)$ that will be defined below,
	with
	\beq\label{xi_ie}
	\xi_i = \ln |c_i + \alpha| - \ln (r_i),\quad \xi_e = \ln |c_e + \alpha| - \ln (r_e).
	\eeq
	%
	For later use we set
	\begin{equation}\label{def:xik}
	\begin{cases}
	\ds\xi_{i,k}=2k(\xi_i-\xi_e)+\xi_i,\\
	\ds\xi_{e,k} = 2k(\xi_i-\xi_e)+\xi_e
	\end{cases}
	\end{equation} 
	for each $k\in\NN$. 
	One can show $$\xi_i>\xi_e>0,$$ so that $\xi_{i,k}\geq\xi_i$ and $\xi_{e,k+1}\geq \xi_e$.

	\subsection{The bipolar coordinate system associated with the eccentric core-shell structure}
	We set the bipolar coordinate system $(\xi, \theta) \in \mathbb{R} \times (-\pi, \pi]$ with respect to the two poles located at $\Bp_1=(-\alpha,0)$ and $\Bp_2=(\alpha,0)$: for any point $\Bx=(x,y)$ in the Cartesian coordinate system, the coordinates $(\xi,\theta)$ are defined such that
	\beq\label{def:bipolar}e^{\xi+\rm{i}\theta}=\frac{\alpha+z}{\alpha-z}\quad\mbox{with } z=x+\rm{i}y.\eeq
	In other words
	\begin{align}\label{bipolar:z}
	z&=\alpha \frac{e^{\xi+{\rm i}\theta}-1}{e^{\xi+{\rm i}\theta}+1}= \alpha \frac{\sinh \xi}{\cosh \xi + \cos \theta}+{\rm i} \alpha \frac{\sin \theta}{\cosh \xi + \cos \theta}.
	\end{align}
	We write either $\Bx=\Bx(\xi,\theta)$ or $z=z(\xi,\theta)$ to specify bipolar coordinates if necessary. For a function $u=u(\Bx)$, we may write $u(\xi,\theta)$ to indicate $u(\Bx(\xi,\theta))$ for notational simplicity. Figure \ref{fig:geometry} illustrates the coordinate level curves of the bipolar coordinate system. Note that the two level curves $\{\theta=\frac{\pi}{2}\}$ and $\{\theta=-\frac{\pi}{2}\}$ form one circle that is centered at the origin with radius $\alpha$. Therefore, we have that
	\beq\label{thetalevel}
	\begin{cases}
		\ds |z(\xi,\theta)|\leq \alpha\qquad\mbox{if }|\theta|\leq \frac{\pi}{2},\\[2mm]
		\ds |z(\xi,\theta)|>\alpha\qquad\mbox{if }|\theta|>\frac{\pi}{2}.
	\end{cases}
	\eeq

	One can easily see that the two scale factors coincide, {\it i.e.},
	$$\left|\pd{\Bx}{\xi}\right|=\left|\pd{\Bx}{\theta}\right|=\frac{1}{h(\xi,\theta)},$$
	where $$h(\xi, \theta) = \frac{\cosh \xi + \cos \theta }{\alpha}.$$
	In particular, the components of $\p \Bx/\p \xi$
	\beq\label{eqn:xdxi}
	\frac{\p x}{\p \xi} = \alpha \frac{1+\cos \theta \cosh\xi}{(\cosh\xi + \cos \theta)^2},\quad\frac{\p y}{\p \xi} = -\alpha \frac{\sin \theta \sinh\xi}{(\cosh\xi + \cos \theta)^2}
	\eeq
	satisfy that
	$\big| \frac{\p x}{\p \xi} \big|, \big| \frac{\p y}{\p \xi} \big|\leq  \frac{1}{h(\xi,\theta)}.$
	The gradient for a function $u(x,y)$ can be written as
	\beq\label{eqn:nabla}\nabla u=h(\xi,\theta)\pd{u}{\xi}\hat{\mathbf{e}}_\xi+h(\xi,\theta)\pd{u}{\theta}\hat{\mathbf{e}}_\theta,
	\quad\mbox{with }\hat{\mathbf{e}}_\xi=\frac{\p \Bx/\p\xi}{|\p \Bx/\p\xi|},\ 
	\hat{\mathbf{e}}_\theta=\frac{\p \Bx/\p\theta}{|\p \Bx/\p\theta|}.
	\eeq

	For any $\xi_0\neq 0$, the level curve $\{\xi=\xi_0\}$ is the circle centered at $\Bc=(c,0)$ with radius $r$ such that
	\beq\label{bipolar:circle}c = \alpha \frac{e^{2\xi_0}+1}{e^{2\xi_0}-1},\quad r=\frac{\alpha }{\sinh \xi_0}.\eeq
	From \eqnref{bipolar:z}, the center $\Bc$ has the bipolar coordinates $(2\xi_0,\pi)$.
	On this circle, the normal and tangential derivatives of a function $u$ satisfy that
	\begin{align}\label{nortan}
	\frac{\p u}{\p \nu} \Big|_{\xi = \xi_0} & = -\mbox{sgn} (\xi_0) h(\xi_0,\theta) \frac{\p u}{\p \xi} \Big|_{\xi = \xi_0},\\\notag
	\frac{\p u}{\p T} \Big|_{\xi = \xi_0} &= -\mbox{sgn} (\xi_0) h(\xi_0,\theta) \frac{\p u}{\p \theta} \Big|_{\xi = \xi_0},
	\end{align}
	where $\nu$ is the outward unit normal vector and $T$ is the rotation of $\nu$ by $\frac{\pi}{2}$-radian angle. 
	We may simply write $\frac{\p u}{\p\nu}(\xi_0,\theta)$ for $\frac{\p u}{\p\nu}|_{\xi=\xi_0}(\xi_0,\theta)$.
	The reflection across the circle $\{\xi=\xi_0\}$ is actually a reflection with respect to $\xi$-variable, as follows.
	\begin{lemma}\label{rbip} 
		For a disk $D$ whose boundary is the coordinate level curve $\{\xi=\xi_0\}$ with some $\xi_0\neq 0$, we have 
		$$R_{\p D}(\Bx(\xi, \theta)) = \Bx(2\xi_0 - \xi,\, \theta),\qquad\mbox{for all }(\xi,\theta)\neq(2\xi_0,\pi).$$\end{lemma}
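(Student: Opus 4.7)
The plan is to exploit the conformal structure underlying the bipolar coordinates. By \eqnref{def:bipolar}, the map $\Phi:z\mapsto w=(\alpha+z)/(\alpha-z)$ is a M\"obius transformation sending the poles $\Bp_1,\Bp_2$ to $0,\infty$, and the bipolar coordinates $(\xi,\theta)$ are simply the ``polar coordinates'' of $w$, i.e.\ $w=e^{\xi+\mathrm{i}\theta}$. Under $\Phi$, the level curve $\{\xi=\xi_0\}$ is mapped to the concentric circle $\{|w|=e^{\xi_0}\}$, and inversion in this circle is the elementary map $I_{\xi_0}(w)=e^{2\xi_0}/\bar w$.

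My main step would be to verify the intertwining identity $R_{\p D}=\Phi^{-1}\circ I_{\xi_0}\circ \Phi$. This is a special case of the general fact that any M\"obius transformation conjugates the reflection across a generalized circle to the reflection across its image; however, since the paper is self-contained in this section, I would give a direct check. Write $R_{\p D}(z)=r^2/\overline{(z-\Bc)}+\Bc$, insert the expressions \eqnref{bipolar:circle} for $\Bc$ and $r$, use the identity $\sinh^2\xi_0=(e^{2\xi_0}-1)^2/(4 e^{2\xi_0})$, and substitute $z=z(\xi,\theta)$ from \eqnref{bipolar:z}. After cancelling common factors (setting $W=e^{2\xi_0}$ and $w=e^{\xi+\mathrm{i}\theta}$ so that $z-\Bc$ factors as $-2\alpha(w+W)/[(w+1)(W-1)]$), one arrives at the closed form $\alpha(W-\bar w)/(W+\bar w)$, which is exactly $z(2\xi_0-\xi,\theta)$.

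Once the intertwining is in place, the conclusion is immediate: $I_{\xi_0}$ sends $e^{\xi+\mathrm{i}\theta}$ to $e^{2\xi_0}/e^{\xi-\mathrm{i}\theta}=e^{(2\xi_0-\xi)+\mathrm{i}\theta}$, which corresponds via $\Phi^{-1}$ to bipolar coordinates $(2\xi_0-\xi,\theta)$. The excluded point $(\xi,\theta)=(2\xi_0,\pi)$ corresponds to $w=-e^{2\xi_0}$; its image under $\Phi^{-1}$ is the center $\Bc$ of $D$, which is precisely the point where $R_{\p D}$ is undefined, so the exclusion is natural and not a genuine gap.

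There is no real obstacle here; the argument is a one-page computation. The only stylistic decision is whether to invoke the general M\"obius--reflection intertwining property (cleaner, but requires a line of justification based on the Schwarz reflection principle) or to push through the direct algebraic verification (routine, but fully self-contained). I would favor the latter since it stays at the elementary level already used in Section~\ref{sec:bipolar} and makes the role of the formulas \eqnref{bipolar:circle} and \eqnref{bipolar:z} transparent.
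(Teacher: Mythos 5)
Your proposal is correct and is essentially the same argument as the paper's: the paper's proof consists precisely of the direct computation $\frac{\alpha+R_{\p D}(z)}{\alpha-R_{\p D}(z)}=e^{2\xi_0}\,\frac{\alpha-\bar z}{\alpha+\bar z}=e^{2\xi_0-\xi+\mathrm{i}\theta}$, which is exactly the intertwining identity $\Phi\circ R_{\p D}=I_{\xi_0}\circ\Phi$ you describe, obtained from the same two facts $r^2=(c+\alpha)(c-\alpha)$ and $e^{2\xi_0}=(c+\alpha)/(c-\alpha)$. The only difference is presentational (you conjugate by the M\"obius map explicitly, the paper just computes), and your handling of the excluded point $(2\xi_0,\pi)$ as the center of $D$ matches the paper's.
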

	\begin{proof}
		Assume that $D$ has the center $\Bc=(c,0)$ and the radius $r$. From \eqnref{bipolar:circle}, we deduce that $\Bc=\Bx(2\xi_0,\pi)$ and 
		$$(c+\alpha)(c-\alpha)=\frac{\alpha^2}{\sinh^2\xi_0}=r^2,\quad e^{2\xi_0}=\frac{c+\alpha}{c-\alpha}.$$
		For $z=x+\rm{i}y\neq c+\rm{i}0$, we have
		\begin{align*}
		\frac{\alpha + R_{\p D}(z)}{\alpha - R_{\p D}(z)}& = \frac{\alpha + \frac{r^2(z-c)}{|z-c|^2} + c}{\alpha - \frac{r^2(z-c)}{|z-c|^2} - c } =   - \frac{c+\alpha}{c-\alpha}\, \frac{\frac{(z-c)(c-\alpha)}{|z-c|^2} +1}{\frac{(z-c)(c+\alpha)}{|z-c|^2} +1} \\
		& = -\frac{c+\alpha}{c-\alpha}\, \frac{(c-\alpha) + (\bar{z} -c)}{(c+\alpha) + (\bar{z}-c)}
		=  \frac{c+\alpha}{c-\alpha}\, \frac{\alpha-\bar{z}}{\alpha+\bar{z}} =e^{2\xi_0 - \xi +{\rm i}\theta}.
		\end{align*}
		We prove the lemma by use of \eqnref{def:bipolar}.
		\qed \end{proof}

	\section{Decomposition of the solution into singular and regular terms}\label{sec:series_bipolar}
	The gradient blow-up of the solution to \eqnref{Eqn} may or may not blow up depending on the applied field $H$. In this section, we show that only the linear term of $H$ contributes to the blow-up feature. This property naturally leads to decomposition of the solution into a singular term (that blows up as $\ep$ tends to zero) and a regular term (that remains bounded independently of $\ep$); see Theorem \ref{main} for further details. 
	
	\subsection{Layer potential formulation}\label{sec:layer}
	
	Let $D$ be a bounded domain in $\RR^2$ with a Lipschitz boundary $\p D$. We define the single layer potential $\Scal_{\p D}$ and the Neumann-Poincar\'{e} operator $\Kcal_{\p D}^*$ as
	\begin{align*}
	\ds\Scal_{\p D }[\varphi] (\Bx) &= \frac{1}{2\pi} \int_{\p D} \ln |\Bx-\By| \varphi (\By)\, d\sigma (\By), \qquad \Bx \in \mathbb{R}^2, \\
	\ds\Kcal_{\p D } ^* [\varphi](\Bx)&= \frac{1}{2\pi}\, p.v. \int_{\p D} \frac{\langle \Bx-\By, \nu_\Bx \rangle}{|\Bx-\By|^2} \varphi(\By)\, d\sigma (\By), \qquad \Bx\in \p D,
	\end{align*}
	where $\varphi \in L^2 (\p D)$. Here {\it p.v.} means the Cauchy principal value.
	They satisfy the relation
	$$\frac{\p}{\p \nu} \Scal_{\p D} [\varphi]\Big|^\pm = \left( \pm \frac{1}{2} + \Kcal_{\p D} ^* \right) \varphi\quad \mbox{on }\p D,$$ 
	where $\nu$ is the outward normal vector on $\p D$.
	For a $\mathcal{C}^{1,\alpha}$-domain $D$, the operator $\mathcal{K}^*_{\p D}$ is compact on $L^2(\p D)$ and the eigenvalues of $\Kcal^*_{\p D}$ on $L^2(\p D)$ lie in $(-1/2, 1/2]$; see \cite{kellog}. The operator $\lambda I-\Kcal^*_{\p D}$ is invertible on $L^2_0(\p\Om)$ for $|\lambda|\geq 1/2$ \cite{book}. For more properties of the Neumann-Poincar\'{e}, see \cite{book2,ammari2007polarization}.

	The solution $u$ to the conductivity equation \eqnref{Eqn} satisfies the transmission conditions 
	\begin{align*}
	\pd{u}{\nu}\Big|^+=k_e \pd{u}{\nu}\Big|^-\qquad&\mbox{on }\p B_e,\\
	k_e\pd{u}{\nu}\Big|^+=k_i \pd{u}{\nu}\Big|^-\qquad&\mbox{on }\p B_i,
	\end{align*}
	and it can be expressed as
	\begin{equation}\label{eqn:BIexpression}
	u(\bold{x}) = H(\bold{x}) + \Scal_{\p B_i} [\varphi_i](\Bx) + \Scal_{\p B_e} [\varphi_e](\Bx),
	\end{equation}
	where $(\varphi_i,\varphi_e)$ are mean-zero functions satisfying
	\begin{equation} 
	\label{eqn:integraleqn0}
	\left(\begin{array}  {cc}
	\ds\lambda_i I-\Kcal_{\p B_i}^* & \displaystyle -\frac{\p \Scal_{\p B_e}}{\p \nu_i} \\[2mm]
	\displaystyle  -\frac{\p \Scal_{\p B_i}}{\p \nu_e} & \lambda_e I-\Kcal^*_{\p B_e}\end{array} \right)
	\left( \begin{array}{c} \varphi_i \\ \varphi_e \end{array} \right)= \left(\begin{array} {c} \displaystyle \frac{\p H}{\p \nu} \Bigr|_{\p B_i} \\[3mm] \displaystyle\frac{\p H}{\p \nu} \Bigr|_{\p B_e} \end{array} \right)
	\end{equation}
	with
	\beq\label{lambda_ie}
	\lambda_i = \frac{k_i + k_e}{2(k_i - k_e)},\quad\displaystyle \lambda_e = \frac{k_e+1}{2(k_e-1)}.
	\eeq 
	Here, $\nu_i$ and $\nu_e$ denote the outward unit normal vectors on $\p B_i$ and $\p B_e$, respectively. Note that $|\lambda_i|,|\lambda_e|>\frac{1}{2}$ due to the positive assumption on the conductivities $k_i,k_e$.
	The problem \eqnref{eqn:BIexpression} is solvable for $|\lambda_i|,|\lambda_e|>\frac{1}{2}$ as shown in \cite{ammari2013spectral}.
	Since $B_i$ and $B_e$ are disks, $\Kcal^*_{\p B_i}$ and $\Kcal^*_{\p B_e}$ are zero operators on $L^2_0(\p B_i)$ and $L^2_0(\p B_e)$, respectively. So, the system of boundary integral equations \eqnref{eqn:integraleqn0} becomes
	\begin{equation}
	\label{eqn:intdisks}
	\left(\begin{array}  {cc}
	\ds\lambda_i I & \displaystyle -\frac{\p \Scal_{\p B_e}}{\p \nu_i} \\[2mm]
	\displaystyle - \frac{\p \Scal_{\p B_i}}{\p \nu_e} & \lambda_e I\end{array} \right)
	\left( \begin{array}{c} \varphi_i \\ \varphi_e \end{array} \right)= \left(\begin{array} {c} \displaystyle ~\frac{\p H}{\p \nu} \Bigr|_{\p B_i} \\[3mm] \displaystyle \frac{\p H}{\p \nu} \Bigr|_{\p B_e} \end{array} \right).
	\end{equation}

	We can relate the tangential component and the normal component of the electric field on $\p B_i$ and $\p B_e$. The following lemma will be used later. One can prove the lemma by slightly modifying the proof of \cite[Theorem 2]{AKL}. 
	\begin{lemma}\label{lemma:tangential}Let $u$ be the solution to \eqnref{Eqn} with an entire harmonic function $H$. Let $\widetilde{H}$ be the harmonic conjugate of $H$ and $\tilde{\sigma}= \frac{1}{k_i} \chi(B_i) + \frac{1}{k_e} \chi(B_e \setminus B_i) + \chi(\mathbb{R}^2 \setminus B_e)$. Then, the solution $v$ to the problem
		\begin{equation*} 
		\left\{ \begin{array}{ll}
		\displaystyle \nabla \cdot \tilde{\sigma} \nabla v = 0 \qquad& \textrm{in } \mathbb{R}^2, \\
		\displaystyle v(\bold{x}) - \widetilde{H}(\bold{x}) =O(|\bold{x}|^{-1})\qquad & \textrm{as } |\bold{x}| \to \infty
		\end{array} \right.
		\end{equation*}
		satisfies that
		$$ \frac{\p u}{\p T} = - \frac{\p v}{\p \nu} \Big|^{+} \textrm { on } \p B_e, ~~~
		\frac{\p u}{\p T} = -\frac{1}{ k_e} \frac{\p v}{\p \nu}\Big|^{+} \textrm { on } \p B_i.
		$$
	\end{lemma}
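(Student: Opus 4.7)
The plan is to construct $v$ explicitly as a piecewise harmonic conjugate of $u$, with the conjugate rescaled by the local conductivity in each subregion, and then to verify that this $v$ satisfies the transmission conditions for the reciprocal conductivity $\tilde\sigma$ as well as the prescribed behavior at infinity.

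More concretely, since $u$ is harmonic in each of the three regions $\mathbb{R}^2\setminus\overline{B_e}$, $B_e\setminus\overline{B_i}$, and $B_i$, I would choose harmonic conjugates $w^e$, $w^m$, $w^i$ on those regions so that $u+{\rm i}w^e$, $u+{\rm i}w^m$, $u+{\rm i}w^i$ are holomorphic there, and then define
\beq
v = w^e \text{ on } \mathbb{R}^2\setminus\overline{B_e},\qquad v = k_e w^m \text{ on } B_e\setminus\overline{B_i},\qquad v = k_i w^i \text{ on } B_i,
\eeq
where the additive constants in each $w$ are picked so that $v$ is continuous across $\p B_i$ and $\p B_e$. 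The scaling is chosen precisely so that the flux conditions for $v$ under $\tilde\sigma$ match those for $u$ under $\sigma$. The Cauchy--Riemann relations (with $T$ the $\pi/2$-rotation of $\nu$) give, on either side of an interface, $\p_T u = -\p_\nu w$ and $\p_T w = \p_\nu u$, which immediately yields the two identities of the lemma: on $\p B_e$ one reads off from the exterior side $\p_T u = -\p_\nu v|^+$, and on $\p B_i$ one reads off from the shell side $\p_T u = -\p_\nu w^m = -\frac{1}{k_e}\p_\nu v|^+$.

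The verification that $v$ is genuinely a solution of the reciprocal problem then reduces to two checks. First, continuity of $v$: tangential continuity $\p_T v|^+=\p_T v|^-$ on each interface follows from the Cauchy--Riemann identity $\p_T w = \p_\nu u$ together with the transmission condition $\sigma\p_\nu u|^+ = \sigma\p_\nu u|^-$ satisfied by $u$; a single additive constant per region suffices to upgrade this to pointwise continuity. Second, the flux transmission $\tilde\sigma\p_\nu v|^+ = \tilde\sigma\p_\nu v|^-$: using $\p_\nu v = -\sigma\p_T u$ in each region (by the scaling in the definition above), continuity of $\p_T u$ (which follows from the continuity of $u$) gives the required identity $\frac{1}{k_e}(k_e\p_T u) = \frac{1}{k_i}(k_i\p_T u)$ at $\p B_i$ and similarly at $\p B_e$.

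The only delicate point will be the behavior at infinity. Because $u-H = O(|\Bx|^{-1})$, we have $\nabla(u-H) = O(|\Bx|^{-2})$, and integrating the Cauchy--Riemann equations along any ray starting in $\RR^2\setminus\overline{B_e}$ shows that the harmonic conjugate $w^e$ of $u$ differs from the harmonic conjugate $\widetilde H$ of $H$ by a function that is $O(|\Bx|^{-1})$ up to an additive constant; absorbing that constant into the free choice in $w^e$ yields $v-\widetilde H = O(|\Bx|^{-1})$, which is the required far-field condition. This step is essentially the slight modification referred to in \cite{AKL}, and is the main thing one has to argue carefully since the conjugate is only defined up to a constant in each region; once it is in place, uniqueness of solutions to the conductivity transmission problem identifies the constructed $v$ with the unique solution appearing in the statement, and the two boundary identities claimed in the lemma are built into the construction.
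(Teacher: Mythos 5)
Your proof is correct and is essentially the argument the paper has in mind (the paper itself gives no proof, deferring to \cite[Theorem 2]{AKL}): one takes the conductivity-weighted harmonic conjugate $v=\sigma w$ region by region and uses the Cauchy--Riemann identities $\p_T u=-\p_\nu w$, $\p_T w=\p_\nu u$ to convert the transmission conditions for $u$ under $\sigma$ into those for $v$ under $\tilde\sigma$. The only step you leave implicit is that the conjugates $w^m$ and $w^e$ are single-valued on the non-simply-connected regions $B_e\setminus\overline{B_i}$ and $\mathbb{R}^2\setminus\overline{B_e}$; this holds because the periods $\oint_{\p B_i}\p_\nu u\,ds$ and $\oint_{\p B_e}\p_\nu u\,ds$ vanish by the divergence theorem together with the flux transmission conditions.
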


	\subsection{Series expansions for the density functions in bipolar coordinates}\label{sec:densities}
	
	The single layer potential associated with a disk can be written in terms of the reflection with respect to the disk's boundary.

	\begin{lemma}[\cite{AKL}]\label{SLDisk}Let $D$ be a disk centered at $\Bc$ and $\nu$ be the outward unit normal vector on $\p D$. If $v$ is harmonic in $D$ and continuous on $\overline{D}$, then we have
		$$\Scal_{\p D} \left[\frac{\p v}{\p \nu} \Big|_{\p D}^- \right] (\bold{x}) = 
		\begin{cases}
		\displaystyle -\frac{1}{2} v(\bold{x}) + \frac{v(\Bc)}{2} \quad& \mbox{for }\bold{x} \in D,\\[3mm]
		\displaystyle -\frac{1}{2}  R_{\p D} [v ](\bold{x}) + \frac{v(\Bc)}{2} \quad&\mbox{for } \bold{x} \in \mathbb{R}^2 \setminus \overline{D}.
		\end{cases}
		$$
		If $v$ is harmonic in $\mathbb{R}^2 \setminus \overline{D}$, continuous on $\mathbb{R}^2 \setminus D$ and $v(\bold{x}) \to 0$ as $|\bold{x}| \to \infty$, then we have
		$$\Scal_{\p D} \left[\frac{\p v}{\p \nu} \Big|_{\p D}^+ \right] (\bold{x}) = 
		\left\{ \begin{array}{ll}
		\ds\frac{1}{2}  R_{\p D}[ v ] (\bold{x})\quad&\mbox{for } \bold{x} \in D,\\[3mm]
		\ds\frac{1}{2} v(\bold{x}) \quad& \mbox{for }\bold{x} \in \mathbb{R}^2 \setminus \overline{D}.
		\end{array} \right.
		$$
	\end{lemma}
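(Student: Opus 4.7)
The plan is to handle both parts of the lemma by a single method-of-images construction. For each case I would build an auxiliary function $W$ on $\RR^2\setminus\p D$ that (i) is harmonic off $\p D$, (ii) is continuous across $\p D$, (iii) has vanishing normal derivative on both sides of $\p D$, and (iv) has controlled behavior at infinity. Then uniqueness for the Neumann problem on each component of $\RR^2\setminus\p D$ pins $W$ down to a single explicit constant, and rearrangement yields the claimed formulas.

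For the first statement I set
\[
W := \Scal_{\p D}[\pd{v}{\nu}|^{-}] + \tfrac12 v \ \text{ in } D, \qquad W := \Scal_{\p D}[\pd{v}{\nu}|^{-}] + \tfrac12 R_{\p D}[v] \ \text{ in }\RR^2\setminus\overline D.
\]
Harmonicity on each side of $\p D$ follows from the harmonicity of the single layer off $\p D$, the hypothesis on $v$, and the fact that circular inversion is conformal in $\RR^2$ and hence preserves harmonicity; continuity across $\p D$ uses the continuity of $\Scal_{\p D}$ together with $R_{\p D}(\By)=\By$ for $\By\in\p D$. The crucial step is the vanishing of $\pd{W}{\nu}$ on each side of $\p D$. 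Combining the jump relation $\pd{\Scal_{\p D}}{\nu}|^{\pm} = (\pm\tfrac12+\Kcal_{\p D}^*)\varphi$ with the identity $\pd{R_{\p D}[v]}{\nu}|^{+} = -\pd{v}{\nu}|^{-}$ on $\p D$ (a consequence of $DR_{\p D}(\By)=I-2\nu(\By)\otimes\nu(\By)$ at a boundary point) collapses both traces of $\pd{W}{\nu}$ to $\Kcal_{\p D}^*(\pd{v}{\nu}|^{-})$. Since $D$ is a disk, $\Kcal_{\p D}^*$ annihilates $L^2_0(\p D)$, and $\pd{v}{\nu}|^{-}$ has mean zero on $\p D$ by the divergence theorem applied to $v$ on $D$; hence $\pd{W}{\nu}\equiv 0$ on both sides.

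At this stage, $W$ is harmonic on each side of $\p D$ with zero Neumann trace and is continuous across $\p D$. As $|\Bx|\to\infty$, the single layer of a mean-zero density decays and $R_{\p D}(\Bx)\to\Bc$, so $W(\Bx)\to v(\Bc)/2$; uniqueness for bounded harmonic functions in the exterior forces $W\equiv v(\Bc)/2$ there, and continuity across $\p D$ together with the interior Neumann problem propagates this constant into $D$. The two stated formulas of the first case then follow by solving for $\Scal_{\p D}[\pd{v}{\nu}|^{-}]$. The second case is mirror-symmetric: take $W := \Scal_{\p D}[\pd{v}{\nu}|^{+}] - v/2$ in $\RR^2\setminus\overline D$ and $W := \Scal_{\p D}[\pd{v}{\nu}|^{+}] - R_{\p D}[v]/2$ inside $D$, noting that $R_{\p D}[v]$ extends harmonically across $\Bc$ by removable singularity since $v\to 0$ at infinity. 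The mean-zero property of $\pd{v}{\nu}|^{+}$ now comes from integrating $\Delta v=0$ over an annulus between $\p D$ and a large circle and using the $O(|\Bx|^{-1})$ decay forced by $v\to 0$ in two dimensions. The same jump-relation calculation gives $\pd{W}{\nu} = 0$ on both sides of $\p D$, and the decay $W\to 0$ at infinity forces $W\equiv 0$ throughout.

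I expect the one delicate point to be the identity $\pd{R_{\p D}[v]}{\nu} = -\pd{v}{\nu}$ on $\p D$, which requires computing the differential of the circular inversion at a boundary point and checking that it genuinely acts as a reflection across the tangent line there. Once this geometric identity is in hand, the remainder is routine bookkeeping with the standard jump formulas and the vanishing of $\Kcal_{\p D}^*$ on mean-zero densities over a disk.
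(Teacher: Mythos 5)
Your proof is correct. The paper itself gives no proof of this lemma (it is quoted from \cite{AKL}), but the standard argument there is the same method of images you use: harmonicity of $R_{\p D}[v]$ via conformality of inversion, the boundary identity $\tfrac{\p}{\p\nu}R_{\p D}[v]=-\tfrac{\p v}{\p\nu}$ coming from $DR_{\p D}=I-2\nu\otimes\nu$ on $\p D$, the mean-zero property of the Neumann data, and a uniqueness argument. The only organizational difference is where the uniqueness is invoked: the usual route checks that the claimed right-hand side is harmonic off $\p D$, continuous across $\p D$, has normal-derivative jump equal to the density, and tends to $0$ at infinity, and then concludes it equals $\Scal_{\p D}[\varphi]$ by Liouville; this never needs $\Kcal^*_{\p D}=0$ explicitly. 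Your version instead builds an auxiliary $W$ with vanishing one-sided Neumann traces and appeals to Neumann-problem uniqueness on each component, which does require the fact that $\Kcal^*_{\p D}$ annihilates $L^2_0(\p D)$ for a disk --- a fact the paper records in Section 3.1, so nothing is missing. Both routes rest on the same two computations, and your handling of the exterior case (removability of the singularity of $R_{\p D}[v]$ at $\Bc$ and the mean-zero property of $\tfrac{\p v}{\p\nu}\big|^+$ from the decay of $v$) is sound.
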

	%
	%

	We now express the density functions $\varphi_i$ and $\varphi_e$ in \eqnref{eqn:BIexpression} in series form by use of Lemma \ref{rbip}.
	\begin{lemma}\label{lemma:density}
		The density functions in \eqnref{eqn:BIexpression} can be expressed as
		\begin{equation}
		\displaystyle \label{expan:phii}
		\begin{cases}
		\ds\lambda_i\varphi_i(\theta) = \left(1-\frac{1}{2\lambda_e}\right) \sum_{m=0} ^{\infty} \left(-\frac{1}{4\lambda_i \lambda_e } \right)^m \frac{h(\xi_i,\theta)}{h(\xi_{i,m},\theta)}\frac{\p H}{\p\nu}(\xi_{i,m},\theta),\\[3mm]
		\displaystyle 
		\lambda_e\varphi_e(\theta) = \frac{\p H}{\p \nu}(\xi_e,\theta) +\frac{1}{2\lambda_i} \left(1-\frac{1}{2\lambda_e}\right) \sum_{m=0} ^{\infty} \left(-\frac{1}{4\lambda_i \lambda_e } \right)^m  \frac{h(\xi_e,\theta)}{h(\xi_{e,m+1},\theta)}\frac{\p H}{\p \nu} (\xi_{e,m+1},\theta).
		\end{cases}
		\end{equation}
	\end{lemma}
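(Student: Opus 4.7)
The plan is to solve the $2\times 2$ system \eqnref{eqn:intdisks} by decoupling and iterating a Neumann series. With the shorthand $A_{ei}[\,\cdot\,]=\frac{\p\Scal_{\p B_e}[\,\cdot\,]}{\p\nu_i}\big|_{\p B_i}$ and $A_{ie}[\,\cdot\,]=\frac{\p\Scal_{\p B_i}[\,\cdot\,]}{\p\nu_e}\big|_{\p B_e}$, elimination of $\varphi_e$ from the second equation yields $\bigl(\lambda_i I-\tfrac{1}{\lambda_e}A_{ei}A_{ie}\bigr)\varphi_i=\p H/\p\nu_i+\tfrac{1}{\lambda_e}A_{ei}(\p H/\p\nu_e)$. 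A first application of Lemma \ref{SLDisk} with $D=B_e$ and interior harmonic function $H$ gives $\Scal_{\p B_e}[\p H/\p\nu_e]=-\tfrac12 H+\tfrac12 H(\Bc_e)$ in $B_e$, and differentiating along $\nu_i$ on $\p B_i\subset B_e$ collapses the source to $(1-\tfrac{1}{2\lambda_e})\p H/\p\nu_i$; this already matches the $m=0$ term of the target series, since $\xi_{i,0}=\xi_i$ makes $h(\xi_i,\theta)/h(\xi_{i,0},\theta)=1$. Because $|\lambda_i|,|\lambda_e|>1/2$ and each round-trip $A_{ei}A_{ie}$ will turn out to be contracting, my plan is to invert the operator on the left by the Neumann series $\varphi_i=\frac{1}{\lambda_i}\sum_{m\ge 0}(\tfrac{1}{\lambda_i\lambda_e}A_{ei}A_{ie})^m[\text{source}]$ and identify each term explicitly.

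The iteration is governed by the auxiliary family $F_k:=H\circ(R_{\p B_i}\circ R_{\p B_e})^k$. By Lemma \ref{rbip}, in bipolar coordinates $(R_{\p B_i}\circ R_{\p B_e})^k$ acts as the pure $\xi$-shift $\xi\mapsto \xi+2k(\xi_i-\xi_e)$, so $F_k(\xi,\theta)=H(\xi+2k(\xi_i-\xi_e),\theta)$. The unique Cartesian pole of this Möbius composition has bipolar coordinates $(-2k(\xi_i-\xi_e),\pi)$, a point on the negative real axis and hence outside $\overline{B_e}$, so $F_k$ is harmonic across $\overline{B_e}$. A direct chain-rule computation using \eqnref{nortan} then identifies the summands of the target series with explicit normal traces,
\begin{equation*}
\frac{\p F_k}{\p\nu_i}\bigg|_{\p B_i}=\frac{h(\xi_i,\theta)}{h(\xi_{i,k},\theta)}\frac{\p H}{\p\nu}(\xi_{i,k},\theta),\qquad \frac{\p F_k}{\p\nu_e}\bigg|_{\p B_e}=\frac{h(\xi_e,\theta)}{h(\xi_{e,k},\theta)}\frac{\p H}{\p\nu}(\xi_{e,k},\theta).
\end{equation*}
Applying Lemma \ref{SLDisk} with $D=B_i$ and interior harmonic $F_k$ and then taking $\p/\p\nu_e$ at $\p B_e$ via the bipolar chain rule gives the half-trip identity $A_{ie}[\p F_k/\p\nu_i]=\tfrac12\,\p F_{k+1}/\p\nu_e$; one further application with $D=B_e$ and interior harmonic $F_{k+1}$ gives $A_{ei}[\p F_{k+1}/\p\nu_e]=-\tfrac12\,\p F_{k+1}/\p\nu_i$. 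Composing the two half-trips produces the crucial round-trip recursion $A_{ei}A_{ie}[\p F_k/\p\nu_i]=-\tfrac14\,\p F_{k+1}/\p\nu_i$.

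Plugging this recursion into the Neumann series immediately delivers the stated formula for $\lambda_i\varphi_i$. The expansion of $\lambda_e\varphi_e$ then follows by substituting this series into the second equation of \eqnref{eqn:intdisks} and applying the half-trip identity $A_{ie}[\p F_m/\p\nu_i]=\tfrac12\,\p F_{m+1}/\p\nu_e$ termwise, the leading $\p H/\p\nu(\xi_e,\theta)$ coming directly from the right-hand side. The main technical obstacle is the harmonicity of $F_k$ up to $\overline{B_e}$: one must locate the Cartesian pole of the Möbius map $(R_{\p B_i}\circ R_{\p B_e})^k$ and verify that it lies outside $\overline{B_e}$ for every $k$, which is most transparent in bipolar coordinates. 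Beyond that, the argument is careful bookkeeping — two factors of $\pm 1/2$ per round-trip combining to $-1/4$, and repeated use of the bipolar chain rule to produce the $h$-ratios — and a brief check that $|h(\xi_i,\theta)/h(\xi_{i,m},\theta)|$ and $|\p H/\p\xi(\xi_{i,m},\theta)|$ decay fast enough in $m$ to make the Neumann series absolutely convergent.
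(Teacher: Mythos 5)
Your proposal is correct and follows essentially the same route as the paper: both rest on Lemma \ref{SLDisk} to turn $\p\Scal_{\p B_i}/\p\nu_e$ and $\p\Scal_{\p B_e}/\p\nu_i$ into reflections, Lemma \ref{rbip} to realize $(R_i\circ R_e)^k$ as the $\xi$-shift $\xi\mapsto\xi+2k(\xi_i-\xi_e)$, and \eqnref{nortan} to produce the $h$-ratios, with the same $(\pm\tfrac12)(\pm\tfrac12)=-\tfrac14$ bookkeeping per round trip. The only difference is organizational — you eliminate $\varphi_e$ and invert by a Neumann series where the paper posits the series ansatz \eqnref{phii:reflection} and verifies it satisfies \eqnref{eqn:intdisks} — and your explicit check that the pole of $(R_i\circ R_e)^k$ lies on the negative real axis, hence outside $\overline{B_e}$, is a welcome justification of a step the paper leaves implicit.
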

	\begin{proof}
		First, we formally expand the solution $(\varphi_i,\varphi_e)$ to \eqnref{eqn:intdisks} as follows (uniform and absolute convergence will be proved later):
		\begin{equation}
		\label{phii:reflection}
		\begin{cases}
		\ds\lambda_i\varphi_i =  \left(1-\frac{1}{2\lambda_e}\right)\sum_{m=0} ^{\infty} \left(-\frac{1}{4\lambda_i \lambda_e } \right)^m \frac{\p}{\p \nu_i}\big( (R_e R_i)^m H \big)\Big|_{\p B_i}, \\[3mm] 
		\ds\lambda_e \varphi_e =\frac{\p H}{\p \nu_e} \Big|_{\p B_e} - \frac{1}{2\lambda_i} \left(1-\frac{1}{2\lambda_e}\right)\sum_{m=0} ^{\infty} \left(-\frac{1}{4\lambda_i \lambda_e } \right)^m \frac{\p}{\p \nu_e} \big((R_i R_e)^m R_i H \big)\Big|_{\p B_e}.
		\end{cases}
		\end{equation}
		Using Lemma \ref{SLDisk}, we compute
		\begin{align*}
		\ds\frac{\p \Scal_{\p B_i} [\varphi_i]}{\p \nu_e} 
		& = \frac{1}{\lambda_i}  \left(1-\frac{1}{2\lambda_e}\right)\sum_{m=0} ^{\infty} \left(-\frac{1}{4\lambda_i \lambda_e } \right)^m \frac{\p}{\p \nu_e} \Scal_{\p B_i}\left[ \frac{\p}{\p \nu_i} \big((R_e R_i)^m H \big)\Big|_{\p B_i}\right] \\
		\ds&= -\frac{1}{2 \lambda_i}  \left(1-\frac{1}{2\lambda_e}\right)\sum_{m=0} ^{\infty} \left(-\frac{1}{4\lambda_i \lambda_e } \right)^m \frac{\p}{\p \nu_e} \big(R_i(R_e R_i)^m H\big) \Big|_{\p B_i}\\
		\ds&= \lambda_e \varphi_e - \frac{\p H}{\p \nu_e}\Big|_{\p B_e},
		\end{align*}
		and
		\begin{align*}
		\ds\frac{\p \Scal_{\p B_e} [\varphi_e]}{\p \nu_i} & = \frac{1}{\lambda_e} \frac{\p}{\p \nu_i} \Scal_{\p B_e} \left[\frac{\p H}{\p \nu_e} \Big|_{\p B_e} \right] \\&
		\quad- \frac{1}{2\lambda_i \lambda_e} \left(1-\frac{1}{2\lambda_e}\right)\sum_{m=0} ^{\infty} \left(-\frac{1}{4\lambda_i \lambda_e } \right)^m \frac{\p}{\p \nu_i} \Scal_{\p B_e} \left[ \frac{\p}{\p \nu_e}\big((R_i R_e)^m R_i H\big) \Big|_{\p B_e}\right] \\
		\ds&= -\frac{1}{2\lambda_e} \frac{\p H}{\p \nu_i} \Big|_{\p B_i} - \frac{1}{4\lambda_i \lambda_e} \left(1-\frac{1}{2\lambda_e}\right)\sum_{m=0} ^{\infty} \left(-\frac{1}{4\lambda_i \lambda_e } \right)^m \frac{\p}{\p \nu_i}\big((R_i R_e)^{m+1} H\big) \Big|_{\p B_i}\\
		\ds&= \lambda_i \varphi_i - \frac{\p H}{\p \nu_i}\Big|_{\p B_i}.
		\end{align*}
		Therefore the series solutions in \eqnref{phii:reflection} satisfy \eqnref{eqn:intdisks}.

		For the entire harmonic function $H$, we have from Lemma \ref{rbip} that
		\begin{align*}
		\ds R_e R_i H(\xi,\theta) &= H(R_i \circ R_e(\xi,\theta)) = H(2(\xi_i - \xi_e)+\xi,\theta), \\
		\ds R_i R_e R_i H(\xi,\theta) &= H(R_i \circ R_e\circ R_i(\xi,\theta)) = H(2(\xi_i - \xi_e)+2\xi_i-\xi,\theta).
		\end{align*}
		Repeating the reflections, it follows that
		\begin{align*}
		\ds (R_e R_i)^m H(\xi,\theta)&= H(2m(\xi_i-\xi_e)+\xi,\theta),\\
		\ds (R_i R_e)^m R_i H(\xi,\theta) &= H(2m(\xi_i-\xi_e)+2\xi_i-\xi,\theta).
		\end{align*}
		Recall that $\xi_{i,m}$ and $\xi_{e,m+1}$ are given as \eqnref{def:xik}. By use of \eqnref{nortan} we deduce that
		\begin{align*}
		\frac{\p}{\p \nu_i}\big( (R_e R_i)^m H \big)\Big|_{\p B_i}& = -h(\xi_i,\theta)\frac{\p}{\p \xi}\left(H\left(2m(\xi_i-\xi_e)+\xi,\theta\right)\right)\Big|_{\xi=\xi_i} \\
		&= \frac{h(\xi_i,\theta)}{h(\xi_{i,m},\theta)}\frac{\p H}{\p\nu} (\xi_{i,m},\theta),\\[1mm]
		\frac{\p}{\p \nu_e}\big((R_i R_e)^m R_i H \big)\Big|_{\p B_e} &= -h(\xi_e,\theta)\frac{\p}{\p \xi}\left(H\left(2m(\xi_i-\xi_e)+2\xi_i-\xi,\theta\right)\right)\Big|_{\xi=\xi_e} \\
		&= -\frac{h(\xi_e,\theta)}{h(\xi_{e,m+1},\theta)}\frac{\p H}{\p \nu} (\xi_{e,m+1},\theta).
		\end{align*}
		Therefore, the equation \eqnref{phii:reflection} becomes \eqnref{expan:phii}. 
		
		Now, we show uniform and absolute convergence for the series expansions. 
		Recall that $\xi_{i,m}\geq\xi_i$ and $\xi_{e,m+1}\geq \xi_e$. 
		Since $\Bx(\xi_{i,m},\theta),\Bx(\xi_{e,m+1},\theta)\in B_e$ for all $m$ and 
		$$ \frac{h(\xi_i,\theta)}{h(\xi_{i,m},\theta)}\leq 1,\quad \frac{h(\xi_e,\theta)}{h(\xi_{e,m+1},\theta)}\leq1,$$ 
		we have
		\begin{align*}
		\frac{h(\xi_i,\theta)}{h(\xi_{i,m},\theta)} \left|\frac{\p H}{\p \nu}(\xi_{i,m},\theta)\right| &\le \Vert\nabla H(\bold{x})\Vert_{L^\infty(B_e)},\\[1mm]
		\frac{h(\xi_e,\theta)}{h(\xi_{e,m+1},\theta)} \left|\frac{\p H}{\p \nu}(\xi_{e,m+1},\theta)\right|& \le \Vert\nabla H(\bold{x})\Vert_{L^\infty(B_e)}.
		\end{align*}
		Note that
		$$\left| \frac{1}{4\lambda_i \lambda_e } \right| < 1$$ due to $|\lambda_i|,|\lambda_e|>\frac{1}{2}$. 
		Therefore, each term in \eqnref{phii:reflection} (in other words, each term in \eqnref{expan:phii}) is bounded by $\Vert \nabla H(\bold{x})\Vert_{L^\infty(B_e)}\left| \frac{1}{4\lambda_i \lambda_e } \right|^m$. This implies convergence and so the proof is completed.
		 \end{proof}

	\begin{cor}\label{boundedness1}
		If $k_i$ and $k_e$ are bounded but not small, then $|\nabla(u-H)|$ is uniformly bounded in $\mathbb{R}^2$ independently of $\ep$.
	\end{cor}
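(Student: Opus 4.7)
The plan is to differentiate the representation $u-H=\Scal_{\p B_i}[\varphi_i]+\Scal_{\p B_e}[\varphi_e]$ from \eqnref{eqn:BIexpression} term by term, inserting the reflection series \eqnref{phii:reflection} for $\varphi_i,\varphi_e$ and using Lemma \ref{SLDisk} to reduce each summand to an explicit harmonic reflection of $H$. For the $\Scal_{\p B_i}$ series, $w_m:=(R_eR_i)^m H$ is harmonic in $B_e\supset B_i$, because in bipolar coordinates it equals $H$ at the shifted point $(\xi+2m(\xi_i-\xi_e),\theta)$, whose first coordinate exceeds $\xi_{i,m}>0$ when $\Bx\in B_i$; Lemma \ref{SLDisk} then gives $\Scal_{\p B_i}[\frac{\p w_m}{\p\nu_i}|_{\p B_i}^-](\Bx)$ equal to $-\tfrac12 w_m(\Bx)+\tfrac12 w_m(\Bc_i)$ inside $B_i$ and to $-\tfrac12 R_i[w_m](\Bx)+\tfrac12 w_m(\Bc_i)$ outside. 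The first piece of $\Scal_{\p B_e}[\varphi_e]$ is handled analogously with $v=H$ and $D=B_e$.

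The delicate step is the remaining series in $\Scal_{\p B_e}[\varphi_e]$. In bipolar coordinates $v_m:=(R_iR_e)^m R_iH=H(2m(\xi_i-\xi_e)+2\xi_i-\xi,\theta)$, and a direct check shows that the argument passes through the preimage of $\infty$, namely $(0,\pi)$, at some interior point of $B_e$, so $v_m$ has a pole inside $B_e$ and is \emph{not} harmonic there. However, $v_m$ is harmonic on $\RR^2\setminus\overline{B_e}$ with a finite but generally nonzero limit $C_m$ at infinity. Since subtracting this constant does not affect $\frac{\p v_m}{\p\nu_e}|_{\p B_e}$, one applies the exterior half of Lemma \ref{SLDisk} to $v_m-C_m$ and obtains $\Scal_{\p B_e}[\frac{\p v_m}{\p\nu_e}|_{\p B_e}]=\tfrac12(R_e[v_m]-C_m)$ in $B_e$ and $\tfrac12(v_m-C_m)$ outside; a short bipolar computation identifies $R_e[v_m]$ with $(R_eR_i)^{m+1}H$. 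After differentiation the additive constants vanish, and the gradient series is again composed of terms of the form $\nabla(H\circ T)$ for conformal M\"obius maps $T$ that shift, or reflect-then-shift, $\xi$.

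Each such term is then controlled uniformly. In two dimensions $|\nabla(H\circ T)(\Bx)|=|\nabla H(T(\Bx))|\cdot|T'(\Bx)|$, and in bipolar coordinates the conformal factor is $|T'(\Bx)|=h(\xi,\theta)/h(\xi',\theta)$, where $(\xi',\theta)$ denotes the target coordinate. A case check across $\Bx\in B_i$, $\Bx\in B_e\setminus\overline{B_i}$, and $\Bx\in\RR^2\setminus\overline{B_e}$ verifies $|\xi'|\ge|\xi|$ in every relevant situation, hence $|T'(\Bx)|\le1$, while $T(\Bx)$ always lies inside $B_e$, giving $|\nabla H(T(\Bx))|\le\|\nabla H\|_{L^\infty(B_e)}$. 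Combined with the geometric factor $|1/(4\lambda_i\lambda_e)|^m$, and since the hypothesis on $k_i,k_e$ keeps $\lambda_i,\lambda_e$ bounded with $|\lambda_i|,|\lambda_e|$ bounded away from $\tfrac12$ \emph{independently of $\epsilon$}, both series converge in $C^1$ with a ratio bounded away from $1$. Summation yields $|\nabla(u-H)(\Bx)|\le C(k_i,k_e)\,\|\nabla H\|_{L^\infty(B_e)}$ pointwise on $\RR^2\setminus(\p B_i\cup\p B_e)$, and the bounded-density jump relations for $\Scal_{\p B_i}[\varphi_i],\Scal_{\p B_e}[\varphi_e]$ guarantee the same bound is attained on either side of the interfaces. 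The principal obstacle is the second paragraph: the interior singularity of $v_m$ combined with its nonzero limit at infinity obstructs a direct application of Lemma \ref{SLDisk} and must be repaired by subtracting $C_m$ and then recognizing $R_e[v_m]=(R_eR_i)^{m+1}H$ so that the final formula is again a reflection series of $H$.
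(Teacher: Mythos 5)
Your proof is correct, but it takes a genuinely different route from the paper's. The paper's argument stays at the level of the densities: it reads off the uniform bound $\|\varphi_i\|_{L^\infty},\|\varphi_e\|_{L^\infty}\le C\|\nabla H\|_{L^\infty(B_e)}/(1-|4\lambda_i\lambda_e|^{-1})$ from the proof of Lemma \ref{lemma:density}, converts it into a bound on $\frac{\p u}{\p\nu}\big|^{\pm}$ via the jump formula \eqnref{jump}, gets the tangential derivative by passing to the harmonic conjugate (Lemma \ref{lemma:tangential}), and then propagates the boundary bounds into each subdomain by the maximum principle together with the decay at infinity. You instead differentiate the representation \eqnref{eqn:BIexpression} term by term, rewriting each summand of the reflection series \eqnref{phii:reflection} as $H\circ T$ for a M\"obius map $T$ and bounding $|\nabla(H\circ T)|$ by $\|\nabla H\|_{L^\infty(B_e)}\cdot h(\xi,\theta)/h(\xi',\theta)\le\|\nabla H\|_{L^\infty(B_e)}$ after verifying $|\xi'|\ge|\xi|$ and $T(\Bx)\in\overline{B_e}$ in each case (all of which check out). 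This buys a direct pointwise bound on $\RR^2$ with no appeal to the harmonic conjugate or the maximum principle, at the price of the case analysis; it also makes explicit a point the paper passes over silently in proving Lemma \ref{lemma:density}, namely that $v_m=(R_iR_e)^mR_iH$ is not harmonic inside $B_e$ (its singularity sits at the bipolar point $(\xi_{i,m}+\xi_i,\pi)$, in fact inside $B_i$) and has a nonzero finite limit at infinity, so Lemma \ref{SLDisk} applies only after subtracting that constant --- your fix, and the identification $R_e[v_m]=(R_eR_i)^{m+1}H$, are exactly right. Both arguments ultimately rest on the same two inputs: the convergent reflection series of Lemma \ref{lemma:density}, and the fact that the hypothesis on $k_i,k_e$ keeps $|4\lambda_i\lambda_e|$ bounded away from $1$ uniformly in $\ep$.
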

	\begin{proof}
		From Lemma \ref{lemma:density} and the analysis in its proof, we have
		$$\|\varphi_i\|_{L^\infty(\p B_i)}, \|\varphi_e\|_{L^\infty(\p B_e)} \leq 6\frac{\|\nabla H\|_{L^\infty(B_e)}}{1-\frac{1}{|4\lambda_i\lambda_e|}}.$$
		The jump formula for the single layer potential and \eqnref{eqn:intdisks} implies 
		\beq\label{jump}
		\left\Vert \frac{\p u}{\p \nu} \Big|^{\pm} \right\Vert_{L^\infty (\p B_i)} = \left|\lambda_i \pm \frac{1}{2} \right| \Vert \varphi_i \Vert_{L^\infty (\p B_i)}.
		\eeq
		Therefore we have uniform boundedness for normal derivatives of $u$ on $\p B_i$ and $\p B_e$. 
		We can also prove uniform boundedness for tangential derivatives by using a harmonic conjugate of $u$, in view of Lemma \ref{lemma:tangential}.
		
		Since $(u-H)(\Bx)$ satisfies the decay condition as $|\Bx|\rightarrow\infty$, $|\nabla(u-H)|$ is uniformly bounded in $\mathbb{R}^2$ independently of $\ep$ by the maximum principle.
		 \end{proof}
	
	\subsection{Blow-up contribution term in $H$ and decomposition of the solution}\label{subsec:decomposition}
	
	We prove the following upper bounds of two series sums related with the scale factor function $h$, which is essential for showing that only the linear term of $H$ contributes to the gradient blow-up.
	\begin{lemma} \label{coshsum} 
		Let $N_\ep = \frac{r_*}{\sqrt{\epsilon}}$, then we have  
		\begin{equation*}
		\begin{cases}
		\ds\sum_{m=0}^{N_\ep} \frac{h(\xi_i,\theta)}{h(\xi_{i,m},\theta)}
		\le  \frac{C}{|z(\xi_i,\theta)|} \qquad\mbox{for all }|\theta|>\frac{\pi}{2},\\[4mm]
		\ds\sum_{m=0}^{N_\ep} \frac{h(\xi_e,\theta)}{h(\xi_{e,m+1},\theta)}
		\le  \frac{C}{|z(\xi_e,\theta)|} \qquad\mbox{for all }|\theta|>\frac{\pi}{2},
		\end{cases}
		\end{equation*}
		where $C$ is a constant independent of $\ep,\,\theta$.
	\end{lemma}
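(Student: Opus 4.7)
The plan is to reduce both sums to integral comparisons after a half-angle substitution. Writing $\cosh\xi+\cos\theta = 2(\sinh^2(\xi/2)+\cos^2(\theta/2))$ and setting $b := |\cos(\theta/2)|$ and $g_m := \sinh(\xi_{i,m}/2)$, the general term of the first sum becomes $(g_0^2+b^2)/(g_m^2+b^2)$; for $|\theta|>\pi/2$ one has $0\le b<1/\sqrt{2}$.

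The scales come from \eqnref{bipolar:circle}, which yields $\sinh\xi_i=\alpha/r_i$ and $\sinh\xi_e=\alpha/r_e$. Together with \eqnref{def:rstar} this gives $\xi_i,\xi_e,\eta := \xi_i-\xi_e = O(\sqrt{\ep})$ and, crucially,
\[
N_\ep\,\eta \;=\; \frac{r_*^2(r_e-r_i)}{r_ir_e}+O(\ep) \;=\; 2+O(\ep).
\]
Hence $\xi_{i,m}/2$ stays in a fixed compact interval for $0\le m\le N_\ep$, and writing $u_m := \xi_i/2+m\eta$ we have $g_m \sim u_m$ with constants independent of $\ep$ and $\theta$. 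Since $1/(u_m^2+b^2)$ is decreasing in $m$, a change of variable $v=\xi_i/2+m\eta$ gives
\[
\sum_{m=0}^{N_\ep}\frac{1}{u_m^2+b^2} \;\le\; \frac{1}{u_0^2+b^2} + \frac{1}{\eta}\int_{u_0}^{\infty}\frac{dv}{v^2+b^2} \;\le\; \frac{C}{u_0^2+b^2} + \frac{C}{\eta\sqrt{u_0^2+b^2}},
\]
using $\int_{u_0}^\infty dv/(v^2+b^2) = (1/b)\arctan(b/u_0) \le \min(\pi/(2b),1/u_0) \le C/\sqrt{u_0^2+b^2}$. Multiplying by $g_0^2+b^2 \sim u_0^2+b^2$ yields
\[
\sum_{m=0}^{N_\ep}\frac{h(\xi_i,\theta)}{h(\xi_{i,m},\theta)} \;\le\; C + \frac{C\sqrt{u_0^2+b^2}}{\eta}.
\]

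To identify this with $C/|z(\xi_i,\theta)|$, I use $\sin\theta = 2b\sqrt{1-b^2}$ (with $1-b^2\ge 1/2$ since $b^2\le 1/2$) and $\sinh\xi_i\sim \xi_i\sim u_0$ in \eqnref{bipolar:z}:
\[
|z(\xi_i,\theta)| \;\sim\; \alpha\,\frac{\sqrt{u_0^2+b^2}}{u_0^2+b^2} \;=\; \frac{\alpha}{\sqrt{u_0^2+b^2}}.
\]
Since $\alpha\sim\eta$ and $\sqrt{u_0^2+b^2}\ge u_0\gtrsim\alpha$, both terms of the previous bound are absorbed into $C/|z(\xi_i,\theta)|$, proving the first inequality. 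The second sum is treated identically after substituting $u'_m := \xi_e/2+(m+1)\eta$, $\xi_e$ for $\xi_i$, and $|z(\xi_e,\theta)|$ for $|z(\xi_i,\theta)|$; since $u'_0 = \xi_e/2+\eta \sim \sqrt{\ep}$, the scaling carries through verbatim.

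The only delicate point is maintaining uniform constants in $\ep$ and $\theta$. Everything rests on $N_\ep\,\eta=O(1)$, which is exactly the content of the definition $r_*^2=2r_ir_e/(r_e-r_i)$ combined with $\alpha=r_*\sqrt{\ep}+O(\ep^{3/2})$. Once the comparabilities $g_m\sim u_m$, $\sin\theta\sim b$, $\sinh\xi_i\sim\xi_i$, and $\alpha\sim\eta$ are recorded with $\ep$-independent constants, the integral comparison is routine; no single step is individually deep.
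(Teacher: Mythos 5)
Your proof is correct, and it takes a genuinely different route from the paper's. The paper first bounds each term by $\bigl|z(\xi_{i,m},\theta)/z(\xi_i,\theta)\bigr|^2$ using the identity $|z|^2=\alpha^2(\cosh\xi-\cos\theta)/(\cosh\xi+\cos\theta)$ and the monotonicity of $\cosh\xi-\cos\theta$ in $\xi$; it then exploits the exact M\"obius relation $e^{\xi_{i,m}+{\rm i}\theta}=\gamma^m e^{\xi_i+{\rm i}\theta}$, together with $|z(\xi_i,\theta)|>\alpha$ for $|\theta|>\pi/2$, to obtain the pointwise bound \eqnref{ineq:zxi}, and finally sums $\sum_m (mt+1)^{-2}\le 1+1/t$. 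You instead work entirely in real variables: the half-angle identity reduces each term to $(u_0^2+b^2)/(u_m^2+b^2)$ up to uniform constants (legitimate, since $N_\ep\,\eta=2+O(\ep)$ keeps $\xi_{i,m}$ in a compact set so that $\sinh(u_m)\sim u_m$), the sum is controlled by $\frac1\eta\int_{u_0}^\infty\frac{dv}{v^2+b^2}\le \frac{C}{\eta\sqrt{u_0^2+b^2}}$, and the identification $|z(\xi_i,\theta)|\sim\alpha/\sqrt{u_0^2+b^2}$ together with $\alpha\sim\eta$ and $u_0\gtrsim\alpha$ closes the argument. Your version is more elementary and self-contained, and it makes transparent where the restriction $|\theta|>\pi/2$ enters (only through $b^2\le 1/2$, hence $|\sin\theta|\sim b$ and $\cosh\xi_i-\cos\theta\sim1$). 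What the paper's route buys is the intermediate estimate \eqnref{ineq:zxi}, which is reused verbatim in the proof of Lemma \ref{lemma:boundedness}; if one adopted your proof, that pointwise bound would have to be extracted separately (it does follow from your framework, since $|z(\xi_{i,m},\theta)|\sim\alpha/\sqrt{u_m^2+b^2}$), but as a proof of the stated lemma your argument is complete.
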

	\begin{proof}
		We prove the first inequality only. The second one can be shown in the same way.
		
		Note that 
		\begin{align}
		\frac{h(\xi_i,\theta)}{h(\xi_{i,m},\theta)}&=\frac{\cosh \xi_i + \cos \theta}{\cosh \xi_{i,m} + \cos \theta} = \frac{(\cosh \xi_i + \cos \theta)}{(\cosh \xi_i - \cos \theta)}\frac{(\cosh \xi_{i,m} - \cos \theta)}{(\cosh \xi_{i,m} + \cos \theta)}\frac{(\cosh \xi_i - \cos \theta)}{(\cosh \xi_{i,m} - \cos \theta)} \notag\\\label{eqn:ratio}
		& \le  \frac{(\cosh \xi_i + \cos \theta)}{\alpha^2 (\cosh \xi_i - \cos \theta)}\frac{\alpha^2 (\cosh \xi_{i,m} - \cos \theta)}{(\cosh \xi_{i,m} + \cos \theta)}  = \left|\frac{z(\xi_{i,m},\theta)}{z(\xi_i,\theta)}\right|^2 .
		\end{align}
		
		We use \eqnref{def:bipolar}, to estimate the last term in the above equation, and obtain
		$$\frac{\alpha+z(\xi_{i,m},\theta)}{\alpha-z(\xi_{i,m},\theta)} =e^{\xi_{i,m}+\rm{i}\theta}=e^{2m(\xi_i-\xi_e)}e^{\xi_i+\rm{i}\theta}=\gamma^m\frac{\alpha+z(\xi_i,\theta) }{\alpha-z(\xi_i,\theta) }$$
		with $\gamma =e^{2(\xi_i - \xi_e)}$.
		This implies 
		\begin{align*}
		z(\xi_{i,m},\theta) 
		&=\alpha\frac{z(\xi_i,\theta)(\gamma^m+1)+\alpha(\gamma^m-1)}{z(\xi_i,\theta)(\gamma^m-1)+\alpha(\gamma^m+1)}= \frac{\displaystyle 1+\frac{\alpha}{z(\xi_i,\theta)}\frac{\gamma^m - 1}{\gamma^m + 1}}{\displaystyle \frac{1}{\alpha}\frac{\gamma^m -1}{\gamma^m +1} + \frac{1}{z(\xi_i,\theta)}}.
		\end{align*}
		From \eqnref{def:rstar} and \eqnref{bipolar:circle}, we deduce $\xi_i = \frac{r_*}{r_i}\sqrt{\ep}+O(\ep\sqrt{\ep})$ and $\xi_e = \frac{r_*}{r_e}\sqrt{\ep}+O(\ep\sqrt{\ep})$ so that 
		\beq\label{xidiff}
		\xi_i-\xi_e=\frac{2}{r_*}\sqrt{\ep}+O(\ep\sqrt{\ep})
		\eeq
		and
		$$1+\frac{3}{r_*} \sqrt{\epsilon} < \gamma = e^{2(\xi_i - \xi_e)}< 1+\frac{5}{r_*} \sqrt{\epsilon}.$$
		Hence, by using \eqnref{thetalevel} as well, we have for $1 \le m < \frac{r_*}{\sqrt{\epsilon}}$ and $|\theta|>\frac{\pi}{2}$  that
		$$1+\frac{2m}{r_*}\sqrt{\epsilon}<\gamma^m < \left( 1+\frac{5}{r_*}\sqrt{\epsilon}\right)^{ \frac{r_*}{\sqrt{\epsilon}}}< e^5$$
		and $$|z(\xi_{i,m},\theta)| \leq\frac{1+\frac{\alpha}{|z(\xi_i,\theta)|}\frac{ e^5-1}{2}}{\left|\frac{2m\sqrt{\epsilon}/r_*}{\alpha(e^5+1)} + \frac{1}{z(\xi_i,\theta)} \right|}
		\leq \frac{C_1}{C_2 m+ \frac{1}{|z(\xi_i,\theta)|}}$$
		with some positive constants $C_1$ and $C_2$ independent of $\ep$, $\theta$ and $m$. 
		Hence, we have
		\beq\label{ineq:zxi}
		\left|\frac{z(\xi_{i,m},\theta)}{z(\xi_i,\theta)}\right|\leq \frac{C}{ m|z(\xi_i,\theta)| + 1}\qquad \mbox{for }1 \le m < \frac{r_*}{\sqrt{\epsilon}},\  |\theta|>\frac{\pi}{2}.
		\eeq
		
		Note that for any $t>0$, the following holds: 
		\begin{align*}
		\sum_{m=0}^\infty \frac{1}{ (m t + 1)^2} 
		&\le 1+\int_0^\infty\frac{1}{(xt+1)^2}\,dx=1+\frac{1}{t}.
		\end{align*} 
		This completes the proof because of \eqnref{eqn:ratio}, \eqnref{ineq:zxi} and the boundedness of $|z(\xi_i,\theta)|$.
		 \end{proof}
	
	\smallskip
	
	Now, we show that $|\nabla (u-H)|$ is bounded regardless of $\epsilon$ if $\nabla H(\Bx)=O(|\Bx|)$ near the origin.
	\begin{lemma}\label{lemma:boundedness} If $\nabla H(\Bx)=O(|\Bx|)$ in a bounded region containing $B_e$, then we have
		$$\lVert \nabla (u-H)\rVert_{L^\infty (\mathbb{R}^2)} \le C$$
		for some constant $C$ independent of $k_i,\, k_e$ and $\ep$.
	\end{lemma}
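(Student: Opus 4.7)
The plan is to strengthen the reduction in Corollary \ref{boundedness1} by improving the bound on the densities $\varphi_i, \varphi_e$. Specifically, I will show $\lVert\lambda_i\varphi_i\rVert_{L^\infty(\p B_i)}, \lVert\lambda_e\varphi_e\rVert_{L^\infty(\p B_e)}\le C$ with $C$ independent of $k_i,k_e,\epsilon$. Rewriting $\p_\nu u|^\pm = (1\pm\frac{1}{2\lambda_j})\lambda_j\varphi_j$ and using $|\lambda_j|\ge 1/2$ then yields uniform bounds on the one-sided normal derivatives of $u$ on both interfaces. Uniform bounds on the tangential derivatives follow from Lemma \ref{lemma:tangential} applied to the dual problem with background $\widetilde H$, the harmonic conjugate of $H$, for which the hypothesis $|\nabla\widetilde H(\Bx)|=|\nabla H(\Bx)|\le C|\Bx|$ still holds by Cauchy--Riemann. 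The apparently dangerous factor $1/k_e$ in the formula for $\p_T u$ on $\p B_i$ is tamed by cancellation with the rational prefactors of the dual densities: explicit substitution gives $\p_T u|_{\p B_i} = -\frac{4k_e}{(k_i+k_e)(k_e+1)}\widetilde{\Sigma}$ for a series $\widetilde{\Sigma}$ bounded by the same analysis as below, and the scalar factor is $\le 4$ because $k_e/(k_i+k_e)\le 1$ and $1/(k_e+1)\le 1$. Once $|\nabla u|$ is uniformly controlled on both interfaces from both sides (and $|\nabla H|$ is bounded on a neighborhood of $B_e$), each component of $\nabla(u-H)$ is harmonic in each of $B_i$, $B_e\setminus\overline{B_i}$, and $\RR^2\setminus\overline{B_e}$ with decay at infinity, so the maximum principle closes the argument.

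The substance is to bound the series in Lemma \ref{lemma:density} uniformly. Using the hypothesis, $|\p H/\p\nu(\xi_{i,m},\theta)|\le C|z(\xi_{i,m},\theta)|$; dropping the factor $|1/(4\lambda_i\lambda_e)|^m\le 1$, it suffices to show
\[
T(\theta) := \sum_{m=0}^\infty \frac{h(\xi_i,\theta)}{h(\xi_{i,m},\theta)}\,|z(\xi_{i,m},\theta)| \le C
\]
uniformly. I split at $m=M_\epsilon$, the smallest integer with $\xi_{i,M_\epsilon}\ge 2$; by \eqref{xidiff}, $M_\epsilon\sim 1/\sqrt\epsilon$. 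For $m\ge M_\epsilon$, the elementary bound $\cosh\xi_{i,m}\ge e^{\xi_{i,m}}/4$ forces $h(\xi_i,\theta)/h(\xi_{i,m},\theta)\le C\gamma^{-m}$ with $\gamma=e^{2(\xi_i-\xi_e)}$, while the level curve $\{\xi=\xi_{i,m}\}$ lies within distance $O(\alpha)$ of the pole $\Bp_2$, giving $|z(\xi_{i,m},\theta)|\le\sqrt{2}\,\alpha$. Thus the tail is $\le C\alpha/(1-\gamma^{-1})\asymp r_*\sqrt\epsilon\cdot r_*/\sqrt\epsilon = O(1)$. For $m<M_\epsilon$ with $|\theta|\le\pi/2$, \eqref{thetalevel} gives $|z(\xi_{i,m},\theta)|\le\alpha$ and $h(\xi_i,\theta)/h(\xi_{i,m},\theta)\le 1$, so the head is $\le C\alpha M_\epsilon = O(1)$. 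For $m<M_\epsilon$ with $|\theta|>\pi/2$, combining \eqref{eqn:ratio} with \eqref{ineq:zxi}---already derived in the proof of Lemma \ref{coshsum}---gives the per-term bound $C|z(\xi_i,\theta)|/(m|z(\xi_i,\theta)|+1)^3$, which sums to $O(1)$ by the same integral comparison as there. The series for $\varphi_e$ (and the dual densities) is handled identically, with the extra first term $\p H/\p\nu|_{\p B_e}$ bounded directly by $\|\nabla H\|_{L^\infty(\p B_e)}$.

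The principal obstacle is that the geometric ratio $|1/(4\lambda_i\lambda_e)|^m$ cannot by itself supply uniform convergence, since $|4\lambda_i\lambda_e|$ can approach $1$ as $k_i,k_e$ degenerate to $|\lambda_i|,|\lambda_e|\to 1/2$. Convergence must instead be extracted purely from the geometry together with the vanishing of $\nabla H$ at the origin. The delicate balance is in the tail $m\ge M_\epsilon$: the slow decay $\gamma^{-m}$, with $\gamma-1\asymp\sqrt\epsilon$, inflates the sum by $1/(1-\gamma^{-1})\asymp 1/\sqrt\epsilon$, offset exactly by the $O(\sqrt\epsilon)$ smallness of $|\nabla H|$ near $\Bp_2$ (which lies at distance $\alpha=O(\sqrt\epsilon)$ from the origin). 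A linear term in $H$ would leave $|\nabla H(\Bp_2)|$ of order $1$ and break this cancellation---which is precisely why the linear part of $H$ is the sole driver of gradient blow-up in the conductivity problem.
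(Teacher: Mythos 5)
Your proposal follows essentially the same route as the paper: bound the series of Lemma \ref{lemma:density} by splitting at $m\sim 1/\sqrt{\ep}$, use that $\nabla H=O(|\Bx|)$ makes $|z(\xi_{i,m},\theta)|=O(\alpha)$ compensate the $1/\sqrt{\ep}$ length of the head and the $1/(1-\gamma^{-1})$ inflation of the tail, then pass to normal derivatives via the jump formula, to tangential derivatives via the harmonic conjugate, and finish with the maximum principle. Your explicit verification that the $1/k_e$ factor in Lemma \ref{lemma:tangential} on $\p B_i$ is cancelled by the prefactor $\frac{4k_e^2}{(k_i+k_e)(k_e+1)}$ of the dual density is a correct and welcome elaboration of a step the paper leaves implicit, but it does not change the overall argument.
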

	\begin{proof}
		Let $\nabla H$ denote the gradient with respect to the Cartesian coordinates, and $\xi_{i,m},\xi_{e,m+1}$ are defined as \eqnref{def:xik}. 
		
		We first show that
		\beq\label{IandII}\sum_{m=0} ^{\infty} \frac{h(\xi_i,\theta)}{h(\xi_{i,m},\theta)}  \big| \nabla H(\xi_{i,m},\theta)\big|\leq C.
		\eeq
		Here and after, $C$ denotes a constant independent of $\ep$, $\theta$ and $m$.
		For notational simplicity we decompose the series in \eqnref{IandII} as
		$$\sum_{m=0} ^{\infty} \frac{h(\xi_i,\theta)}{h(\xi_{i,m},\theta)}  \big| \nabla H(\xi_{i,m},\theta)\big| = \sum_{m \le \frac{r_*}{\sqrt{\ep}}} + \sum_{m >\frac{r_*}{\sqrt{\ep}}}:= ~  I + II$$
		and separately prove the uniform boundedness.
		For $|\theta|\leq\frac{\pi}{2}$, we have from \eqnref{thetalevel} and the assumption $\nabla H(\Bx)=O(|\Bx|)$ in $B_e$ that
		$$I\leq C\frac{r_*}{\sqrt{\ep}}\big|z(\xi_{i,m},\theta)\big|\leq C\frac{r_*}{\sqrt{\ep}}\alpha=O(1).$$
		For $|\theta|>\frac{\pi}{2}$, we have from Lemma \ref{coshsum} and \eqnref{ineq:zxi}
		$$I\leq \frac{C}{|z(\xi_i,\theta)|}\max\left\{\big|z(\xi_{i,m},\theta)\big|:m \le \frac{r_*}{\sqrt{\ep}}\right\}=O(1).$$
		We now estimate $II$. 
		For $m>\frac{r_*}{\sqrt{\ep}}$, we have $\xi_{i,m}>1$ due to \eqnref{xidiff} so that $$\big|z(\xi_{i,m},\theta)\big| < C\alpha$$
		and
		$$\frac{h(\xi_i,\theta)}{h(\xi_{i,m},\theta)}=\frac{\cosh\xi_i+\cos\theta}{\cosh \xi_{i,m}+\cos\theta}
		\leq \frac{\cosh\xi_i}{\frac{1}{2}\cosh\xi_{i,m}}\leq 4 e^{\xi_i-\xi_{i,m}}=4e^{-2m(\xi_i-\xi_e)}.$$
		Therefore, it follows that
		$$
		II
		\leq \sum_{m>\frac{r_*}{\sqrt{\ep}}}4e^{-2m(\xi_i-\xi_e)}C\big|z(\xi_{i,m},\theta)\big|\leq \frac{C\alpha}{\xi_i-\xi_e}= O(1).
		$$
		In the same way, one can easily show that
		\beq\label{IandII2}
		\sum_{m=0} ^{\infty} \frac{h(\xi_i,\theta)}{h(\xi_{e,m+1},\theta)}  \left| \nabla H(\xi_{e,m+1},\theta)\right|\leq C.
		\eeq

		From \eqnref{IandII}, \eqnref{IandII2} and Lemma \ref{lemma:density} we obtain
		\begin{align*}
		|\varphi_i| < \frac{C}{\lambda_i} \left( 1-\frac{1}{2\lambda_e} \right), \quad |\varphi_e| < \frac{C}{\lambda_e}
		\end{align*}
		and, thus,
		\begin{align*}
		\left\Vert \frac{\p u}{\p \nu} \bigg|_{\pm} \right\Vert_{L^\infty (\p B_i)} &= \left|\lambda_i \pm \frac{1}{2} \right| \Vert \varphi_i \Vert_{L^\infty (\p B_i)} \le C.
		\end{align*}
		The same result holds on $\p B_e$.

		We can also prove uniform boundedness for tangential derivatives by using a harmonic conjugate for the harmonic function; see Lemma \ref{lemma:tangential}.
		Following the same argument in the proof of cor \ref{boundedness1}, we have uniform boundedness for $|\nabla(u-H)|$ in $\mathbb{R}^2$ independently of $\ep$.
		 \end{proof}

	As a direct consequence of Lemma \ref{lemma:boundedness}, we can decompose the solution into a singular term and a regular term as follows.
	\begin{theorem}\label{main}
		Let $H$ be an arbitrary entire harmonic function. Set $H_1(\Bx) = \nabla H(\mathbf{0})\cdot \Bx$ and $H_2(\Bx)=H(\Bx)-H_1(\Bx)$. Then, the solution $u$ to the transmission problem \eqnref{Eqn} has the decomposition
		\beq
		u(\Bx)= u_1(\Bx)+ u_2(\Bx),\quad\Bx\in\RR^2,
		\eeq
		where $u_1$ and $u_2$ are the solutions to \eqnref{Eqn} with $H_1$ and $H_2$ in the place of $H$, respectively, and $\Vert \nabla (u_2 -H_2)(\Bx)\Vert_{L^\infty(\RR^2)} < C$ for some constant $C$ independent of $\epsilon$.
	\end{theorem}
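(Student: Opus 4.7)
The plan is to combine the linearity of the transmission problem with Lemma~\ref{lemma:boundedness}. The decomposition $u=u_1+u_2$ is obtained immediately: since the PDE $\nabla\cdot\sigma\nabla u=0$ is linear and the far-field condition $u(\Bx)-H(\Bx)=O(|\Bx|^{-1})$ is linear in the pair $(u,H)$, the unique solvability of \eqref{Eqn} (equivalently, of the integral system \eqref{eqn:intdisks} under the assumption $|\lambda_i|,|\lambda_e|>1/2$) yields $u=u_1+u_2$ with $u_j$ the solution corresponding to background $H_j$. So the entire content of the theorem reduces to the uniform bound $\|\nabla(u_2-H_2)\|_{L^\infty(\RR^2)}\le C$.

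To get this bound, I would verify the hypothesis of Lemma~\ref{lemma:boundedness} for $H_2$. By construction $H_2(\Bx)=H(\Bx)-\nabla H(\mathbf{0})\cdot\Bx$, so $H_2(\mathbf{0})$ and $\nabla H_2(\mathbf{0})$ vanish. Because $H$ is entire harmonic, $H_2$ is real-analytic on $\RR^2$, and a Taylor expansion about the origin gives $\nabla H_2(\Bx)=O(|\Bx|)$ as $\Bx\to\mathbf{0}$; since this estimate is valid on any fixed compact set, in particular on any bounded region containing $B_e$, the hypothesis of Lemma~\ref{lemma:boundedness} is satisfied.

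A small but important point to check is that such a fixed bounded region can be chosen independent of $\epsilon$. Using \eqref{disks:center}, as $\epsilon\to 0$ we have $c_i\to r_i$ and $c_e\to r_e$, so $B_i$ and $B_e$ remain inside a fixed disk (say, centered at the origin with radius $2r_e$) for all sufficiently small $\epsilon$. Hence the constant governing $|\nabla H_2(\Bx)|\le C_{H}|\Bx|$ on $B_e$ depends only on $H$ and not on $\epsilon$, so the constant produced by Lemma~\ref{lemma:boundedness} is also $\epsilon$-independent. Applying that lemma to $u_2$ yields $\|\nabla(u_2-H_2)\|_{L^\infty(\RR^2)}\le C$, completing the proof.

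The main obstacle here is conceptual rather than technical: one must recognize that Lemma~\ref{lemma:boundedness} has already done all the work, and that the role of this theorem is simply to split off the linear part of $H$ as the \emph{only} piece that can contribute to the blow-up. The technical content lives in the series estimates of Lemma~\ref{coshsum} and the consequent bound on the densities $\varphi_i,\varphi_e$; the present theorem is a clean packaging of those estimates via the elementary observation that subtracting the linear part of $H$ produces a harmonic function whose gradient vanishes at the origin.
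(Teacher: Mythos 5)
Your proposal is correct and matches the paper's argument: the paper states Theorem \ref{main} as a direct consequence of Lemma \ref{lemma:boundedness}, obtained exactly as you describe by linearity/uniqueness for the splitting $u=u_1+u_2$ and by noting that $\nabla H_2(\mathbf{0})=0$ forces $\nabla H_2(\Bx)=O(|\Bx|)$ near the origin. Your additional check that the region containing $B_e$ (and hence the constant in the $O(|\Bx|)$ bound) can be taken independent of $\epsilon$ is a worthwhile detail the paper leaves implicit.
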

	\begin{cor}
		If $\lambda_i\lambda_e>\frac{1}{4}$ (equivalently, either $k_i>k_e>1$ or $k_i<k_e<1$), then for any entire function $H$ we have
		$$\lVert \nabla (u-H)\rVert_{L^\infty (\mathbb{R}^2)} \le C$$
		with some positive constant $C$ independent of $\ep$.
	\end{cor}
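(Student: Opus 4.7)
The plan is to use Theorem~\ref{main} to reduce the claim to the case of a linear background potential, and then to mimic the density-estimate scheme of Corollary~\ref{boundedness1}. Decomposing $H = H_1 + H_2$ with $H_1(\Bx) = \nabla H(\mathbf{0})\cdot\Bx$, Theorem~\ref{main} already furnishes $\|\nabla(u_2 - H_2)\|_{L^\infty(\RR^2)} \leq C$ independently of $\epsilon$, so it suffices to prove the bound for $u_1$, i.e.\ when $H$ itself is linear.

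For such a linear $H$, the constant $|\nabla H|$ uniformly controls $|\partial H/\partial\nu|$ on every coordinate circle $\{\xi = \xi_{i,m}\}$ and $\{\xi = \xi_{e,m+1}\}$, and the ratios $h(\xi_i,\theta)/h(\xi_{i,m},\theta)$ and $h(\xi_e,\theta)/h(\xi_{e,m+1},\theta)$ are bounded by $1$. Hence each term of the series in Lemma~\ref{lemma:density} is majorized by $|\nabla H|\,(4|\lambda_i\lambda_e|)^{-m}$. The hypothesis $\lambda_i\lambda_e > 1/4$ places $(4\lambda_i\lambda_e)^{-1}$ in $(0,1)$, strictly bounded away from $1$, so both series converge uniformly in $\theta$ and $\epsilon$, giving $\|\varphi_i\|_{L^\infty(\partial B_i)}, \|\varphi_e\|_{L^\infty(\partial B_e)} \leq C$ uniformly in $\epsilon$.

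Given the bounded densities, the jump formula \eqref{jump} supplies a uniform bound for $\partial u/\partial\nu$ on $\partial B_i\cup\partial B_e$. For the tangential derivatives I pass through the harmonic conjugate problem of Lemma~\ref{lemma:tangential}: the harmonic conjugate $\widetilde{H}$ of a linear harmonic function is again linear, and a direct calculation from $\widetilde{k}_i = 1/k_i,\, \widetilde{k}_e = 1/k_e$ gives $\widetilde{\lambda}_i = -\lambda_i$ and $\widetilde{\lambda}_e = -\lambda_e$, whence $\widetilde{\lambda}_i\widetilde{\lambda}_e = \lambda_i\lambda_e > 1/4$. The density estimate of the previous paragraph therefore applies verbatim to the conjugate solution $v$, and Lemma~\ref{lemma:tangential} translates its normal derivatives into a uniform bound for $\partial u/\partial T$ on $\partial B_i\cup\partial B_e$.

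With $|\nabla u|$ uniformly bounded on $\partial B_i\cup\partial B_e$, the maximum principle applied to the subharmonic function $|\nabla u|^2$ in each harmonic subregion $B_i$, $B_e\setminus\overline{B_i}$, $\RR^2\setminus\overline{B_e}$, together with the far-field decay $u-H = O(|\Bx|^{-1})$, yields $\|\nabla(u-H)\|_{L^\infty(\RR^2)} \leq C$. I expect the most delicate point to be the duality identity $\widetilde{\lambda}_i\widetilde{\lambda}_e = \lambda_i\lambda_e$: without the hypothesis $\lambda_i\lambda_e > 1/4$ being preserved under the conjugate transformation, the tangential-derivative control via Lemma~\ref{lemma:tangential} would fail and the whole reduction to boundary data would collapse.
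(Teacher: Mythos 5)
Your reduction to a linear $H$ via Theorem \ref{main}, the conjugate computation $\widetilde{\lambda}_i\widetilde{\lambda}_e=\lambda_i\lambda_e$ for the tangential derivatives, and the final maximum-principle step all match the paper. The gap is in the central density estimate. You assert that the hypothesis $\lambda_i\lambda_e>\frac14$ places $(4\lambda_i\lambda_e)^{-1}$ ``strictly bounded away from $1$.'' It does not: it only gives $(4\lambda_i\lambda_e)^{-1}\in(0,1)$. For instance, taking $k_e=M$ and $k_i=M^2$ with $M\to\infty$ one has $\lambda_i,\lambda_e\to\frac12^{+}$, hence $4\lambda_i\lambda_e\to1^{+}$. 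Your geometric-series majorant therefore yields only
$$\|\varphi_i\|_{L^\infty(\p B_i)}\le \frac{C\,|\nabla H(\mathbf{0})|}{1-(4\lambda_i\lambda_e)^{-1}},$$
a constant which is indeed independent of $\ep$ for each fixed pair $(k_i,k_e)$ but which degenerates as $\lambda_i\lambda_e\to\frac14^{+}$. Read that way, the hypothesis $\lambda_i\lambda_e>\frac14$ does no work at all---$|4\lambda_i\lambda_e|>1$ holds for every admissible pair of conductivities---and the corollary would collapse into the proof of Corollary \ref{boundedness1}, whose constant likewise depends on $k_i,k_e$. The content of this corollary is precisely the uniformity over the whole regime $\lambda_i\lambda_e>\frac14$, including conductivities for which $4\lambda_i\lambda_e$ is arbitrarily close to $1$; this uniformity is what justifies the later assertion that no blow-up occurs in this regime.

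The idea your proposal is missing is that positivity of $\lambda_i\lambda_e$ makes the series alternating. Using \eqnref{nortan} to write
$$\sum_{m=0}^{\infty}\Big(-\tfrac{1}{4\lambda_i\lambda_e}\Big)^{m}\frac{h(\xi_i,\theta)}{h(\xi_{i,m},\theta)}\frac{\p H}{\p\nu}(\xi_{i,m},\theta)
=-\sum_{m=0}^{\infty}\Big(-\tfrac{1}{4\lambda_i\lambda_e}\Big)^{m}h(\xi_i,\theta)\Big(a\pd{x}{\xi}+b\pd{y}{\xi}\Big)\Big|_{\xi=\xi_{i,m}},$$
one notes that the $m$-th coefficient has modulus at most $h(\xi_i,\theta)/h(\xi_{i,m},\theta)\le1$ and that, since $\pd{x}{\xi}$, $\pd{y}{\xi}$ and their $\xi$-derivatives change sign at most finitely many times for fixed $\theta$, the sequence of coefficients is piecewise monotone in $m$. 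The alternating-series test then bounds the sum by a constant multiple of $|\nabla H(\mathbf{0})|$ independently of how close $(4\lambda_i\lambda_e)^{-1}$ is to $1$. This is the step you need and do not supply; without it the bound is not uniform over the stated range of conductivities.
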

	\begin{proof}
		Thanks to Theorem \ref{main}, we may consider only the linear function for $H$, say $H(x,y)=ax+by$. 
		Then, from \eqnref{nortan} we have
		\begin{align*}
		&\sum_{m=0} ^{\infty} \left(-\frac{1}{4\lambda_i \lambda_e } \right)^m \frac{h(\xi_i,\theta)}{h(\xi_{i,m},\theta)}\frac{\p H}{\p\nu} (\xi_{i,m},\theta) \\
		=&-\sum_{m=0} ^{\infty} \left(-\frac{1}{4\lambda_i \lambda_e } \right)^m h(\xi_i,\theta)\left(a\pd{x}{\xi}+b\pd{y}{\xi}\right)\Big|_{\xi=\xi_{i,m}}.
		\end{align*}
		
		Note that from \eqnref{eqn:xdxi}, $\pd{x}{\xi}$, $\pd{y}{\xi}$ and their derivatives in $\xi$ have at most a finite number of changes in sign when $\theta$ is fixed. Moreover, 
		$$h(\xi_i,\theta)\Big|\pd{x}{\xi}(\xi_{i,m},\theta)\Big|, \, h(\xi_i,\theta)\Big|\pd{y}{\xi}(\xi_{i,m},\theta)\Big|\leq \frac{h(\xi_i,\theta)}{h(\xi_{i,m},\theta)}\leq 1.$$ 
		Therefore, from the convergence property of alternating series we derive that
		\beq\notag
		\left| \sum_{m=0} ^{\infty} \left(-\frac{1}{4\lambda_i \lambda_e } \right)^m \frac{h(\xi_i,\theta)}{h(\xi_{i,m},\theta)}\frac{\p H}{\p\nu} (\xi_{i,m},\theta) \right| \le C \left| \nabla H(\bold{0}) \right|
		\eeq
		and, in the same way,
		\beq\notag \left| \sum_{m=0} ^{\infty} \left(-\frac{1}{4\lambda_i \lambda_e } \right)^m \frac{h(\xi_e,\theta)}{h(\xi_{e,m+1},\theta)}\frac{\p H}{\p\nu} (\xi_{e,m+1},\theta) \right| \le C \left| \nabla H(\bold{0}) \right|.
		\eeq
		Hence, $\varphi_i, \varphi_e$ are bounded independent of $\ep$. Following the same argument in cor \ref{boundedness1} we have uniform boundedness for $|\nabla(u-H)|$.
		 \end{proof}

	For the case of separated disks, a boundedness result comparable to Lemma \ref{lemma:boundedness} was derived in \cite{AKLLZ}, based on the series expression in the Cartesian coordinates that is similar to \eqnref{phii:reflection}. We emphasize that  in this paper, we provide a much simpler proof thanks to Lemma \ref{coshsum}.

	\section{Asymptotics of the singular term in bipolar coordinates}\label{sec:bipolarapprox}
	
	From the analysis in the previous section, the gradient blow-up does not occur either when $\nabla H(\Bx) = O(|\Bx|)$ or when $\lambda_i \lambda_e>\frac{1}{4}$ (equivalently, either $k_i>k_e>1$ or $k_i<k_e<1$).
	In this section, we first derive an asymptotic expansion for the solution to \eqnref{Eqn} assuming that $H$ is a linear function and $\lambda_i \lambda_e<-\frac{1}{4}$ (equivalently, either $k_i>k_e,\, 1>k_e$ or $k_i<k_e,\,1<k_e$). We then characterize the gradient blow-up. We follow the derivation in \cite{LYu2D}, where the asymptotic behavior for the electric field was derived in the presence of two separated circular conductors.

	The gradient blow-up feature is essentially related to the following quantities:
	\beq \label{lambdabeta}
	\tau_i=\frac{k_i -k_e}{k_i+k_e},\quad \tau_e = \frac{1-k_e}{1+k_e},\quad \tau = \tau_i\tau_e,\quad \beta = \frac{r_*(-\ln \tau)}{4\sqrt{\epsilon}},
	\eeq
	and 
	\beq\label{eqn:CxCy}
	C_x =  \left( \nabla H(\bold{0}) \cdot \Be_1 \right),\quad  C_y =  \left( \nabla H(\bold{0}) \cdot \Be_2\right),
	\eeq
	where $\Be_1=(1,0)$ and $\Be_2=(0,1)$. 
	Note that $\tau_i = \frac{1}{2\lambda_i},\ \tau_e=-\frac{1}{2\lambda_e}$ and $\beta>0$.

	\subsection{Series solution by the separation of variables}
	In bipolar coordinates, harmonic functions admit the general solution expansion
	$$f(\xi, \theta) = a_0 + b_0 \xi + c_0 \theta + \sum_{n=1} ^{\infty} \left[(a_n e^{n\xi} + b_n e^{-n\xi} )\cos n\theta + (c_n e^{n\xi} + d_n e^{-n\xi}) \sin n\theta\right],$$
	where $a_n, b_n, c_n$ and $d_n$ are some constants. 
	The linear functions $x$ and $y$ can be expanded as
	\begin{align*}
	x &= \mbox{sgn}(\xi) \alpha \left[ 1+ 2 \sum_{n=1} ^{\infty} (-1)^n e^{-n|\xi|}\cos n\theta \right], \\
	y &= -2 \alpha \sum_{n=1} ^{\infty} (-1)^n e^{-n |\xi|} \sin n \theta.
	\end{align*}
	\begin{lemma}\label{lemma:transmissionseries}
		We define
		\begin{equation*}
		U(\bold{x}) := C+ \begin{cases}
		\ds \sum_{n=1} ^{\infty} \left(A_n e^{n(\xi - \rm{i}\theta -2\xi_i )}  + B_n e^{n(\xi - \rm{i}\theta-2\xi_e )} \right)\quad& \mbox{for }\xi<\xi_e,\\[1mm]
		\ds \sum_{n=1} ^{\infty} \left(A_n e^{n(\xi - \rm{i}\theta -2\xi_i )} + B_n e^{n(-\xi - \rm{i}\theta)} \right) \quad& \mbox{for }\xi_e<\xi<\xi_i,\\[1mm]
		\ds \sum_{n=1} ^{\infty} \left(A_n e^{n(- \xi - \rm{i}\theta)} + B_n e^{n(-\xi - \rm{i}\theta)}\right) \quad& \mbox{for }\xi>\xi_i
		\end{cases}
		\end{equation*}
		with
		\begin{align*}
		A_n& =\frac{2\alpha (-1)^{n}}{\tau^{-1} e^{2n(\xi_i - \xi_e)}-1}\left( - 1 -\tau_e^{-1} \right) e^{2n(\xi_i-\xi_e)}, \\
		B_n& =  \frac{2\alpha (-1)^{n}}{\tau^{-1} e^{2n(\xi_i - \xi_e)}-1}\left(1 + \tau_i^{-1}{ e^{2n(\xi_i - \xi_e)}}\right),
		\end{align*}
		and 
		$ C=-\sum_{n=1} ^\infty A_n e^{n(-2\xi_i-{\rm i}\pi)} +  B_n e^{-{\rm i}n\pi}$. 
		Then, the functions $x+\Re\{U(\bold{x})\}$ and $y+\Im\{U(\bold{x})\}$ are the solutions to \eqnref{Eqn} for $H(\bold{x})=x$ and $H(\bold{x})=y$, respectively.
	\end{lemma}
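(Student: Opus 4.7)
My plan is to verify directly that $u = x + \Re\{U\}$ (and $u = y + \Im\{U\}$) satisfy all the conditions equivalent to being the solution of \eqnref{Eqn} with $H(\Bx) = x$ (and $H(\Bx) = y$): (i) harmonicity on each of the three bipolar regions separated by $\{\xi = \xi_e\}$ and $\{\xi = \xi_i\}$, (ii) continuity of $u$ across the two interfaces, (iii) the flux conditions $k_e \pd{u}{\nu}|^+ = k_i \pd{u}{\nu}|^-$ on $\p B_i$ and $\pd{u}{\nu}|^+ = k_e \pd{u}{\nu}|^-$ on $\p B_e$, and (iv) the decay $u - H = O(|\Bx|^{-1})$ at infinity.

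Conditions (i) and (ii) are essentially structural. Each summand $e^{n(\pm\xi - {\rm i}\theta)}$ equals $\bigl(e^{\pm\xi - {\rm i}\theta}\bigr)^n$, and since $\xi + {\rm i}\theta$ is a holomorphic function of $z = x + {\rm i}y$ by \eqnref{def:bipolar}, each summand is (anti-)holomorphic in $z$, so its real and imaginary parts are harmonic in each region. Continuity at $\xi = \xi_i$ follows because both the interior formula and the shell formula collapse there to $\sum_n (A_n + B_n) e^{-n\xi_i - {\rm i}n\theta}$; an identical collapse at $\xi = \xi_e$ gives continuity across $\p B_e$. Since $x$ and $y$ are globally harmonic and continuous, $x + \Re\{U\}$ and $y + \Im\{U\}$ inherit both properties.

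The bulk of the work lies in the flux conditions. By \eqnref{nortan}, these reduce to matching weighted $\pd{u}{\xi}$ at each interface. I would substitute the bipolar expansions $x = \mathrm{sgn}(\xi)\alpha\bigl[1 + 2\sum_n (-1)^n e^{-n|\xi|}\cos n\theta\bigr]$ and $y = -2\alpha\sum_n(-1)^n e^{-n|\xi|}\sin n\theta$ recorded just before the lemma, differentiate the series defining $U$ term by term, and separate Fourier modes using the orthogonality of $\{e^{\pm {\rm i}n\theta}\}$. Each mode $n$ then contributes a $2\times 2$ linear system in $(A_n, B_n)$, one equation from $\p B_i$ and one from $\p B_e$; its determinant is a nonzero multiple of $\tau^{-1}e^{2n(\xi_i - \xi_e)} - 1$ (nonvanishing because $|\tau| < 1$), and Cramer's rule yields the stated formulas once one identifies $\tau_i = 1/(2\lambda_i)$ and $\tau_e = -1/(2\lambda_e)$. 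Because each term of $U$ is anti-holomorphic in $z$ in its region, the same coefficients $A_n, B_n$ solve the systems for the real and imaginary parts simultaneously, which is why a single $U$ handles both $H = x$ and $H = y$.

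The point at infinity corresponds to $(\xi, \theta) \to (0, \pm \pi)$ by \eqnref{def:bipolar}. Evaluating the exterior piece of $U$ at this limit gives an absolutely convergent series (geometric decay in $n$ from the $e^{-2n\xi_e}$, $e^{-2n\xi_i}$ factors), and $C$ is chosen so that $U + C$ vanishes there; expanding each exponential to first order in $\xi - {\rm i}(\theta \mp \pi) = O(1/z)$ produces the required $O(|\Bx|^{-1})$ rate. Uniform convergence and term-by-term differentiability of the series defining $U$ throughout each region follow because every exponent in the $n$-th summand has the form $n\cdot(\text{negative constant})$, combined with $|A_n| + |B_n| = O(1)$. \textbf{The main obstacle} I expect is the sign and normalization bookkeeping in the $2\times 2$ system: correctly handling $\mathrm{sgn}(\xi)$ in the expansion of $x$ when differentiating in $\xi$ at $\xi_i, \xi_e > 0$, the conventions for $\pm$ traces of $\pd{u}{\nu}$ imposed by \eqnref{nortan}, and the algebraic inversion performed cleanly enough to recognize the specific numerators $-1 - \tau_e^{-1}$ and $1 + \tau_i^{-1}e^{2n(\xi_i - \xi_e)}$ asserted for $A_n$ and $B_n$.
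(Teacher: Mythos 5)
Your proposal is correct and follows essentially the same route as the paper, which itself only asserts that one "can easily show" the transmission conditions by direct verification (separation of variables in bipolar coordinates, mode-by-mode matching) and delegates the decay at infinity to the analogous lemma in the two-disk reference; your plan fills in exactly those steps, correctly identifying the continuity collapse at $\xi=\xi_i,\xi_e$, the per-mode $2\times 2$ system with nonvanishing determinant $\tau^{-1}e^{2n(\xi_i-\xi_e)}-1$, and the limit point $(\xi,\theta)\to(0,\pm\pi)$ for the behavior at infinity.
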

	\begin{proof}
		One can easily show that $x+\Re\{U(\bold{x})\}$ and $y+\Im\{U(\bold{x})\}$ satisfy the transmission condition in the equation \eqnref{Eqn} and demonstrate uniform convergence. We can prove the decay property in the same way as in Lemma 3.1 of \cite{LYu2D}. 
		 \end{proof}

	\begin{definition}
		The Lerch transcendent function is defined as
		\beq\label{eqn:Lerch}
		L(z;\beta)= -\int_0 ^\infty \frac{ze^{-(\beta+1)t}}{1+z e^{-t}} dt \qquad \mbox{for } z\in \mathbb{C}, |z| <1.
		\eeq
		We also define
		$$
		P(z;\beta)= (-z) \frac{\partial}{\partial z} L(z;\beta) = \int_0 ^\infty \frac{ze^{-(\beta+1)t}}{(1+z e^{-t})^2} dt.$$
	\end{definition}
	
	Let us derive some properties on $P$ for later use. 
	By the integration by parts it follows that for all $s>0$,
	\begin{align}
	P(e^{-s-\rm{i}\theta};\beta)
	&=\int_0^\infty e^{-\beta t}\frac{e^{-s-t-\rm{i}\theta}}{(1+e^{-s-t-\rm{i}\theta})^2} \,dt\notag\\\label{P:integral}	&=\int_0^\infty e^{-\beta t}\frac{1+\cosh (s+t)  \cos \theta - \rm{i} \sinh (s+t) \sin \theta}{2(\cosh (s+t) + \cos \theta)^2}\,dt.
	\end{align}
	One can easily show that 
	\begin{align} \label{Pproperty1}
	\left|P(e^{-s+\rm{i}\theta }) \right|&  \le \int_0^\infty e^{-\beta t} \frac{1}{2(\cosh (s+t) + \cos \theta)} \,dt\leq \frac{1}{2\beta(\cosh s + \cos \theta)}.
	\end{align}
	For any $s_2>s_1>0,\, \beta>0$ and $|\theta|\leq\pi$, the following holds:
	\beq\label{Pproperty2}
	\left|P(e^{-s_2+\rm{i}\theta};\beta)-P(e^{-s_1+\rm{i}\theta};\beta)\right|\leq \frac{s_2-s_2}{2(\cosh s_1+\cos\theta)}.
	\eeq
	
	The following integral approximation for an infinite series sum is crucial to obtaining the gradient blow-up term of $u$.
	\begin{lemma}[\cite{LYu2D}]\label{lemma:LYu2D:series} Let $0<a<a_0$ and $0<\tau<1$. For arbitrary $\theta \in (-\pi, \pi]$, we have
		$$\left| a_0 \sum_{m=1} ^{\infty}\left( \tau^{m-1} \frac{e^{-ma_0 + a + \rm{i}\theta}}{(1+e^{-ma_0 + a + \rm{i}\theta})^2}\right) - P\bigr(e^{-(a_0 - a)+\rm{i}\theta};\, \frac{-\ln{\tau}}{a_0}\big) \right| \le \frac{4a_0}{\cosh(a_0-a) + \cos \theta}.$$
	\end{lemma}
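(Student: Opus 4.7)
The plan is to view the series on the left as a left-endpoint Riemann sum of mesh $a_0$ for the integral defining $P$, and then to bound the resulting quadrature error on $[a_0-a,\infty)$.

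First, set $\beta := -\ln\tau/a_0>0$ and substitute $s = t + (a_0-a)$ in the integral representation of $P(e^{-(a_0-a)+\rm{i}\theta};\beta)$ to obtain
\[
P(e^{-(a_0-a)+\rm{i}\theta};\beta) \;=\; e^{\beta(a_0-a)+\rm{i}\theta}\int_{a_0-a}^{\infty} F(s)\,ds,\qquad F(s):=\frac{e^{-(\beta+1)s}}{(1+e^{-s+\rm{i}\theta})^{2}}.
\]
Setting $s_m := ma_0-a$ and using the identity $\tau^{m-1}=e^{-\beta a_0(m-1)}=e^{-\beta s_m+\beta(a_0-a)}$, a short calculation shows that the $m$-th term of the series on the left of the lemma equals $e^{\beta(a_0-a)+\rm{i}\theta}F(s_m)$. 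Hence the quantity to be estimated equals
\[
e^{\beta(a_0-a)}\left|\,a_0\sum_{m=1}^{\infty}F(s_m)\;-\;\int_{a_0-a}^{\infty}F(s)\,ds\right|,
\]
that is, $e^{\beta(a_0-a)}$ times the left-endpoint Riemann sum error of $F$ on $[a_0-a,\infty)$.

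Next, bounding the error term-by-term by $\int_{s_m}^{s_{m+1}}|F(s)-F(s_m)|\,ds \leq a_0\int_{s_m}^{s_{m+1}}|F'(s)|\,ds$ reduces the task to estimating $a_0\,e^{\beta(a_0-a)}\int_{a_0-a}^{\infty}|F'(s)|\,ds$. Direct differentiation gives
\[
F'(s) \;=\; -(\beta+1)F(s)\;+\;\frac{2\,e^{-(\beta+2)s}}{(1+e^{-s+\rm{i}\theta})^{3}}.
\]
Using the key identity $|1+e^{-s+\rm{i}\theta}|^{2}=2e^{-s}(\cosh s+\cos\theta)$ already employed in the paper, both summands are bounded pointwise by expressions of the form $C\,e^{-\beta s}(\cosh s+\cos\theta)^{-\gamma}$ with $\gamma\in\{1,3/2\}$. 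Combined with the monotonicity $\cosh s+\cos\theta\geq\cosh(a_0-a)+\cos\theta$ on $[a_0-a,\infty)$, this produces the desired denominator $\cosh(a_0-a)+\cos\theta$ in the final bound.

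The main obstacle is the $\beta$-uniform control. A naive estimate $|F'(s)|\lesssim(\beta+1)|F(s)|$ yields a factor $(\beta+1)/\beta$ upon integrating $e^{-\beta s}$ against $(\cosh(a_0-a)+\cos\theta)^{-1}$, and this factor blows up as $\tau\to 1^{-}$ (equivalently, $\beta\to 0^{+}$). The remedy is to use the sharper bound $1/(\cosh s+\cos\theta)\leq 2e^{-s}$ valid once $e^{s}\geq 2|\cos\theta|$ (with a small-$s$ regime handled by direct computation when $a_0-a$ is small), which converts the $(\beta+1)$-factor into an exponentially decaying weight $e^{-s}$ whose integration contributes $1/(\beta+1)$ instead of $1/\beta$. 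Combined with the elementary inequality $e^{-(a_0-a)}(\cosh(a_0-a)+\cos\theta)\leq 2$ used to absorb the residual factor $e^{\beta(a_0-a)}\cdot e^{-(\beta+1)(a_0-a)}=e^{-(a_0-a)}$, these cancellations produce the stated bound with constant $4$.
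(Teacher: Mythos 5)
The paper itself gives no proof of this lemma---it is imported verbatim from \cite{LYu2D}---so I can only assess your argument on its own terms. Your reduction is correct and cleanly done: with $b=a_0-a$, $s_m=ma_0-a$ and $F(s)=e^{-(\beta+1)s}(1+e^{-s+{\rm i}\theta})^{-2}$, both the series and $P$ carry the common prefactor $e^{\beta b+{\rm i}\theta}$, the sum is the left-endpoint Riemann sum of $F$ on $[b,\infty)$ with mesh $a_0$, and the error is at most $a_0e^{\beta b}\int_b^\infty|F'(s)|\,ds$. Everything therefore hinges on showing $e^{\beta b}\int_b^\infty|F'|\le 4/(\cosh b+\cos\theta)$, and that is where the proposal has genuine gaps. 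First, the auxiliary inequality $1/(\cosh s+\cos\theta)\le 2e^{-s}$ ``once $e^s\ge 2|\cos\theta|$'' is false: at $\theta=\pi$, $s=\ln 2$ the left side is $4$ and the right side is $1$. The regime you defer to ``direct computation''---$a_0-a$ small with $\theta$ near $\pi$, where $D:=\cosh b+\cos\theta\to 0$---is exactly the critical one; it can be repaired (the closed form gives $\int_b^\infty(\cosh s+\cos\theta)^{-1}ds\le 2/D$ for all $b>0$, which together with a case split $\beta\le 1$ versus $\beta\ge 1$ controls the $(\beta+1)|F|$ term), but that repair is not in your argument. Second, the cube term is not actually controlled by what you wrote: its modulus is $e^{-(\beta+1/2)s}/(\sqrt2\,(\cosh s+\cos\theta)^{3/2})$, and the monotonicity step $(\cosh s+\cos\theta)^{-3/2}\le D^{-1/2}(\cosh s+\cos\theta)^{-1}$ followed by integration yields only $O(D^{-3/2})$, which is \emph{not} $O(D^{-1})$ as $D\to 0$; one has to exploit the genuine cancellation in $\int_b^\infty(s^2+\delta^2)^{-3/2}ds$ (with $\delta^2=2(1+\cos\theta)$) to recover $O(D^{-1})$. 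Third, even after these repairs the method delivers some universal constant, not visibly the stated $4$.

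A cleaner route---and the one consistent with how this paper argues elsewhere (see the alternating-series step in the corollary to Theorem \ref{main}) and, I believe, with \cite{LYu2D} itself---is to avoid integrating $|F'|$ altogether: split $\Re F$ and $\Im F$ into finitely many monotone pieces in $s$ and use that a left-endpoint Riemann sum of a monotone function differs from its integral by at most $(\mbox{mesh})\times(\mbox{sup})$. Since $e^{\beta b}|F(s)|=e^{\beta b}e^{-\beta s}/(2(\cosh s+\cos\theta))$ is decreasing on $[b,\infty)$ with supremum $1/(2D)$, each monotone piece contributes at most $a_0/(2D)$, and counting pieces produces the constant $4$ while sidestepping both the $\beta\to 0$ and the $D\to 0$ difficulties that your estimate of $\int|F'|$ must fight.
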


	\subsection{Integral expression of the singular term in bipolar coordinates}
	We verify the asymptotic behavior of $u$ in bipolar coordinates as follows.
	We will later express the gradient blow-up term of $u$ in terms of the Cartesian coordinates in section \ref{sec:cartesian}.
	\begin{theorem} \label{blowupterm}
		Let $u$ be the solution to \eqnref{Eqn}. Then we have the following:
		\begin{itemize}
			\item[\rm(a)]
			The solution $u$ admits that
			\beq\label{u:singular}
			u(\Bx) = H(\Bx) + C_x r_*^2\Re\big\{q\big(\Bx;\beta,\tau_i, \tau_e\big)\big\} + C_y r_*^2\Im\big\{q\big(\Bx;\beta,\tau_i, \tau_e\big)\big\} + r(\Bx),\quad \Bx\in\RR^2,
			\eeq
			where {\it the singular function} $q$ is
			\begin{align*} \label{eqn:asymptotic}
			&q(\bold{x};\beta,\tau_i, \tau_e) \\
			&:= \frac{1}{2}
			\begin{cases}
			\ds-\big(\tau+ \tau_i\big)L(e^{\xi-\rm{i}\theta-2\xi_i} ;\beta)+ \big(\tau+\tau_e\big)L(e^{\xi-\rm{i}\theta-2\xi_e};\beta)   \quad& \textrm{in } \mathbb{R}^2 \setminus B_e, \\[.5mm]
			\ds- \big(\tau+ \tau_i\big)L(e^{\xi-\rm{i}\theta-2\xi_i} ;\beta)+ \big(\tau+\tau_e\big)L(e^{-\xi-\rm{i}\theta};\beta)  \quad& \textrm{in } B_e \setminus B_i, \\[.5mm]
			\ds- \big(\tau+ \tau_i\big)L(e^{-\xi-\rm{i}\theta} ;\beta) + \big(\tau+\tau_e\big)L(e^{-\xi-\rm{i}\theta};\beta)  \quad& \textrm{in } B_i,
			\end{cases}
			\end{align*}
			and $\Vert \nabla r(\Bx) \Vert_{L^\infty(\RR^2)} < C$ for some constant $C$ independent of $\epsilon$.\\
			
			\item[\rm(b)]
			For any $k_i,k_e$, we have that $\|\nabla (u-H)\|_{L^\infty(\RR^2\setminus\overline{(B_e\setminus B_i)})}\leq C$ for some constant $C$ independent of $\ep$.
			The gradient blow-up occurs only when $\tau_i,\tau_e$ are similar to $1$ (equivalently, $0<k_e\ll 1,k_i$). For such a case, we have that in $B_e\setminus B_i$,
			\begin{align*}
			&\nabla (u-H)(\bold{x})\\
			&= 
			\ds \frac{C_x r_*(\tau_i+\tau_e+2\tau)}{2 \sqrt{\epsilon}}\big(\cosh \xi + \cos\theta\big) \Re\left\{P(e^{-\xi - \rm{i}\theta};\beta) \right\}\hat{\bold{e}}_\xi \\ \ds \quad &+\frac{C_y r_*( \tau_i+\tau_e+2\tau )}{2\sqrt{\epsilon}}\big(\cosh \xi + \cos\theta\big) \Im\left\{ P(e^{-\xi - \rm{i}\theta};\beta) \right\}\hat{\bold{e}}_\xi + O(1),
			\end{align*}
			where $O(1)$ is uniformly bounded independently of $\epsilon$ and $\Bx$.
		\end{itemize}
	\end{theorem}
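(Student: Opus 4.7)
The plan is to reduce to a linear background field and then convert the resulting double series into a Lerch-function representation. By Theorem~\ref{main} it suffices to treat $H(\Bx)=C_x\,x+C_y\,y$, so that Lemma~\ref{lemma:transmissionseries} gives $u-H=C_x\,\Re\{U\}+C_y\,\Im\{U\}$ where $U$ is the explicit series displayed there. The central observation is that the rational factor $(\tau^{-1}e^{2n(\xi_i-\xi_e)}-1)^{-1}$ hidden in the coefficients $A_n,B_n$ is itself a geometric series in $\tau\,e^{-2n(\xi_i-\xi_e)}$; after inserting this expansion, swapping the order of summation (justified by absolute convergence since $|\tau|<1$), and evaluating the inner geometric series in $n$ in closed form, each block of $U$ collapses to an $m$-sum of the shape
\[
\sum_{m\ge 1}\tau^m\,\frac{-e^{-X_m}}{1+e^{-X_m}},\qquad X_m=m a_0+(\text{phase}),\ a_0=2(\xi_i-\xi_e).
\]

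For part (a) I will view this $m$-sum as a Riemann sum and make the substitution $t=ma_0$ (so that $\tau^m=e^{-\beta t}$), producing an approximation by $\frac{1}{a_0}L(\cdot\,;\beta)$ with pointwise remainder $O(a_0)$; this is the $L$-analog of Lemma~\ref{lemma:LYu2D:series} and follows the same crude-bounding-of-the-integrand strategy. Using $2\alpha/a_0=r_*^2/2+O(\ep)$ together with the identities $\tau=\tau_i\tau_e$ and $\tau/\tau_e=\tau_i$, the leading coefficients $-\frac{1}{2}(\tau+\tau_i)$ and $+\frac{1}{2}(\tau+\tau_e)$ of the two Lerch terms in $q$ fall out of the geometric expansions of $A_n$ and $B_n$, with the three-region piecewise structure inherited directly from $U$. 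The main technical obstacle is upgrading the resulting pointwise estimate $|r|\le C$ to the uniform gradient bound $\|\nabla r\|_{L^\infty(\RR^2)}\le C$: my plan is to observe that $r$ itself solves a transmission problem of the form \eqnref{Eqn} whose densities admit series like \eqnref{expan:phii}, bound these densities uniformly via Lemma~\ref{coshsum}, and then run the boundedness machinery of Lemma~\ref{lemma:boundedness} (jump formulas for single-layer potentials, Lemma~\ref{lemma:tangential} for tangential derivatives, and the maximum principle on each of $B_i$, $B_e\setminus B_i$, $\RR^2\setminus B_e$) to reach the gradient estimate. Quantifying the effective ``smoothness'' of the source after subtracting the Lerch terms is the delicate point.

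For part (b) I differentiate the formula of (a) using $\p_\xi L(e^{\pm\xi-\mathrm{i}\theta+c};\beta)=\pm P(e^{\pm\xi-\mathrm{i}\theta+c};\beta)$ and the bipolar gradient formula \eqnref{eqn:nabla}. In $B_i$ the two $L$-arguments coincide, $q$ reduces to $\frac{1}{2}(\tau_e-\tau_i)L(e^{-\xi-\mathrm{i}\theta};\beta)$, and the bound \eqnref{Pproperty1} gives $r_*^2\,h(\xi,\theta)\,|P(e^{-\xi-\mathrm{i}\theta};\beta)|\le r_*^2/(2\alpha\beta)=2/(-\ln\tau)+O(\ep)$; the elementary inequality $-\ln\tau\ge|\tau_e-\tau_i|$ (which follows from $-\ln s\ge 1-s$ for $s\in(0,1)$ applied to $|\tau_i|$ and $|\tau_e|$, using $\tau_i\tau_e>0$) then yields the uniform $O(1)$ bound for $\nabla(u-H)$, and an identical argument handles $\RR^2\setminus B_e$. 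Inside the shell the two $P$-arguments $e^{\xi-\mathrm{i}\theta-2\xi_i}$ and $e^{-\xi-\mathrm{i}\theta}$ differ in their real exponents by $2|\xi_i-\xi|\le 2(\xi_i-\xi_e)=O(\sqrt\ep)$, so \eqnref{Pproperty2} allows replacing both by $P(e^{-\xi-\mathrm{i}\theta};\beta)$ with a uniformly bounded error; the two coefficients sum to $(\tau+\tau_i)+(\tau+\tau_e)=\tau_i+\tau_e+2\tau$, and $r_*^2\,h(\xi,\theta)/2=r_*(\cosh\xi+\cos\theta)/(2\sqrt\ep)+O(1)$ (using $\alpha=r_*\sqrt\ep+O(\ep\sqrt\ep)$) produces the claimed leading-order asymptotic.
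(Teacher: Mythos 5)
Your part (b) is essentially the paper's argument: differentiate $q$ to get $P$, replace $P(e^{\xi-\mathrm{i}\theta-2\xi_i};\beta)$ by $P(e^{-\xi-\mathrm{i}\theta};\beta)$ in the shell via \eqnref{Pproperty2}, sum the coefficients to $\tau_i+\tau_e+2\tau$, and kill the bounded regions and the $\hat{\mathbf{e}}_\theta$-component by combining \eqnref{Pproperty1} with $|\tau_i-\tau_e|\leq|\ln\tau|$. That part is fine. The algebraic skeleton of part (a) is also right (reduction to linear $H$ via Theorem \ref{main}, geometric expansion of $(\tau^{-1}e^{2n(\xi_i-\xi_e)}-1)^{-1}$, swap of summation, recognition of the Lerch structure).

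The gap is in how you obtain $\|\nabla r\|_{L^\infty(\RR^2)}\leq C$ in part (a). You approximate $U$ itself by $r_*^2 q$ with a \emph{pointwise} remainder $O(a_0)$ and then propose to upgrade $|r|\leq C$ to $|\nabla r|\leq C$ by treating $r$ as a solution of a transmission problem of the form \eqnref{Eqn}. This does not work: $q$ is harmonic in each region but does not satisfy the flux transmission conditions, so $r=(u-H)-r_*^2(C_x\Re q+C_y\Im q)$ solves $\nabla\cdot\sigma\nabla r$ equal to distributional sources supported on $\p B_i\cup\p B_e$, not \eqnref{Eqn}, and its ``densities'' have no reason to admit expansions like \eqnref{expan:phii}. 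Moreover a sup-norm bound on a piecewise harmonic function in a shell of thickness $\ep$ gives no uniform gradient bound (interior estimates degenerate like $1/\mathrm{dist}$), so the claimed upgrade is not a technicality but the whole difficulty. The correct route --- and the one the paper takes --- is to differentiate the series for $U$ from Lemma \ref{lemma:transmissionseries} \emph{first}; after the geometric expansion the inner sums are exactly of the form treated by Lemma \ref{lemma:LYu2D:series} (the $P$-version you already have, with $a_0=2(\xi_i-\xi_e)$), whose error term $\frac{4a_0}{\cosh(a_0-a)+\cos\theta}$, multiplied by the scale factor $h(\xi,\theta)=(\cosh\xi+\cos\theta)/\alpha$ and the prefactor $\sim 2\alpha$ from $A_n,B_n$, is $O(a_0)=O(\sqrt\ep)$. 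This yields $\nabla U=r_*^2\nabla q+O(1)$ directly, and the gradient bound on $r$ follows by definition, with no upgrading step needed and no new ``$L$-analog'' lemma to prove.
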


	\begin{proof}
		In view of Theorem \ref{main}, we may consider only the linear function for $H$. In other words, $H(\Bx)=(\nabla H(\bold{0}) \cdot (1,0)) x +(\nabla H(\bold{0}) \cdot (0,1)) y$. Moreover, we can assume $\tau=\tau_i\tau_e>0$ since the blow-up happens only in such a case.

		One can derive a series expansion for $\nabla U(\Bx)$ by differentiating the formula in Lemma \ref{lemma:transmissionseries}.
		From the definition of $P$, we have
		\begin{align*}
		\frac{\partial q}{\partial \xi}& = \frac{1}{2} \begin{cases}
		(\tau+ \tau_i )P(e^{\xi-\rm{i}\theta-2\xi_i} ;\beta) - (\tau+\tau_e)P(e^{\xi-\rm{i}\theta-2\xi_e};\beta)\quad & \textrm{in } \mathbb{R}^2 \setminus \overline{B_e}, \\[.5mm]
		(\tau+ \tau_i)P(e^{\xi-\rm{i}\theta-2\xi_i} ;\beta) + (\tau+\tau_e)P(e^{-\xi-\rm{i}\theta};\beta) \quad& \textrm{in } B_e \setminus\overline{ B_i}, \\[.5mm]
		-(\tau+ \tau_i)P(e^{-\xi-\rm{i}\theta} ;\beta) + (\tau+\tau_e)P(e^{-\xi-\rm{i}\theta};\beta) \quad& \textrm{in } B_i,
		\end{cases}\\[2mm]
		\frac{\partial q}{\partial \theta}& = \frac{\rm{i}}{2} \begin{cases}
		-(\tau+ \tau_i)P(e^{\xi-\rm{i}\theta-2\xi_i} ;\beta) + (\tau+\tau_e)P(e^{\xi-\rm{i}\theta-2\xi_e};\beta)\quad & \textrm{in } \mathbb{R}^2 \setminus \overline{B_e}, \\[.5mm]
		-(\tau+ \tau_i)P(e^{\xi-\rm{i}\theta-2\xi_i} ;\beta) + (\tau+\tau_e)P(e^{-\xi-\rm{i}\theta};\beta)\quad & \textrm{in } B_e \setminus \overline{B_i}, \\[.5mm]
		-(\tau+ \tau_i)P(e^{-\xi-\rm{i}\theta} ;\beta) + (\tau+\tau_e)P(e^{-\xi-\rm{i}\theta};\beta)\quad & \textrm{in } B_i.
		\end{cases}
		\end{align*}
		Thanks to Lemma \ref{lemma:LYu2D:series}, one can show that
		\beq\label{estimate:nablaU}
		\nabla U(\Bx) = r_* ^2 \nabla q(\Bx;\beta,\tau_i, \tau_e) + O(1), \quad \Bx \in \RR^2,
		\eeq
		in a similar way as in the proof of \cite[Lemma 5.1]{LYu2D}.
		Since the functions $x+\Re\{U(\bold{x})\}$ and $y+\Im\{U(\bold{x})\}$ are, respectively, the solutions to \eqnref{Eqn} with $H(\Bx)=x$ and $H(\Bx)=y$, we complete the proof of (a).
		
		Now we prove (b).
		From \eqnref{estimate:nablaU} it follows that
		\begin{align*}
		\nabla (u-H) (\bold{x})=
		& C_x r_*^2 (\Re\{\nabla q(\bold{x};\beta,\tau_i , \tau_e) \} \cdot \hat{\bold {e}}_\xi)\hat{\bold {e}}_\xi + C_x r_*^2 (\Re\{\nabla q(\bold{x};\beta,\tau_i , \tau_e) \} \cdot \hat{\bold {e}}_\theta)\hat{\bold {e}}_\theta \\
		+ &C_y r_*^2 (\Im\{\nabla q(\bold{x};\beta,\tau_i ,\tau_e) \} \cdot \hat{\bold {e}}_\xi)\hat{\bold {e}}_\xi + C_y r_*^2 (\Im\{\nabla q(\bold{x};\beta, \tau_i ,\tau_e) \} \cdot \hat{\bold {e}}_\theta)\hat{\bold {e}}_\theta+O(1).
		\end{align*}
		Using \eqnref{eqn:nabla} and \eqnref{Pproperty2}, we can show in a similar way as in \cite[section 4.1]{LYu2D} that 
		\begin{align}\label{eqn:nablaq_core_shell}
		\nabla q \cdot \hat{\bold {e}}_\xi & = {O}(1) + \frac{1}{2}h(\xi,\theta) \begin{cases}
		(\tau_i-\tau_e)P(e^{-\xi_e-\rm{i}\theta};\beta) & \textrm{in } \mathbb{R}^2 \setminus \overline{B_e}, \\[.5mm]
		(\tau_i+\tau_e+2\tau)P(e^{-\xi-\rm{i}\theta};\beta) & \textrm{in } B_e \setminus \overline{B_i}, \\[.5mm]
		(\tau_e-\tau_i)P(e^{-\xi-\rm{i}\theta};\beta) & \textrm{in } B_i,
		\end{cases}\\	\notag
		\nabla q \cdot \hat{\bold {e}}_\theta & = O(1) + \frac{\rm{i}}{2} h(\xi,\theta)(\tau_e-\tau_i) P(e^{-\xi-\rm{i}\theta};\beta)  \hspace{1.8cm} \textrm{in } \mathbb{R}^2.
		\end{align}

		By use of \eqnref{Pproperty1}, we obtain that
		$$\left|h(\xi,\theta) P(e^{-\xi-\rm{i}\theta};\beta)\right| \le \frac{1}{\alpha\beta}\le \frac{C}{|\ln \tau|}.$$
		Note that we have
		\begin{align*}
		|\tau_i|-|\tau_e | & \le \big| |\tau_i| - 1 \big|+ \big|1-|\tau_e|\big| \le -\ln	|\tau_i|- \ln|\tau_e| \le |\ln \tau|
		\end{align*}
		owing to $\tau> 0$ and $\left| \tau_i \right|, \left| \tau_e \right| < 1$.
		For $\tau_i, \tau_e<0$, it follows that
		$$ \left| \nabla q \right| \le C \quad \textrm{in } \mathbb{R}^2,$$
		so that the blow-up does not happen. For $\tau_i,\tau_e>0$, we have that
		\begin{align*}
		\left| \nabla q \cdot \hat{\bold {e}}_\xi\right|&\le C \textrm{ in }(\mathbb{R}^2 \setminus \overline{B_e})\cup B_i, \\
		\left| \nabla q \cdot \hat{\bold {e}}_\theta\right| &\le C \textrm{ in } \mathbb{R}^2.
		\end{align*}
		From \eqnref{eqn:nablaq_core_shell} and the definition of $h$, $\alpha$, we can derive the formula in (b) and complete the proof.
		 \end{proof}
	
	Figure \ref{fig:result} shows the graphs of the normal and tangential components of $\nabla u$ on $\p B_e$. As stated in Theorem \ref{blowupterm}(b), only the normal component blows up as $\ep$ tends to zero. 
	\begin{figure}[!h]
		\centering
		\begin{subfigure}{.32\textwidth}
			\centering
			\adjincludegraphics[height=4cm, width=5cm,trim={2.5cm 2.3cm 2cm 1.5cm}, clip]{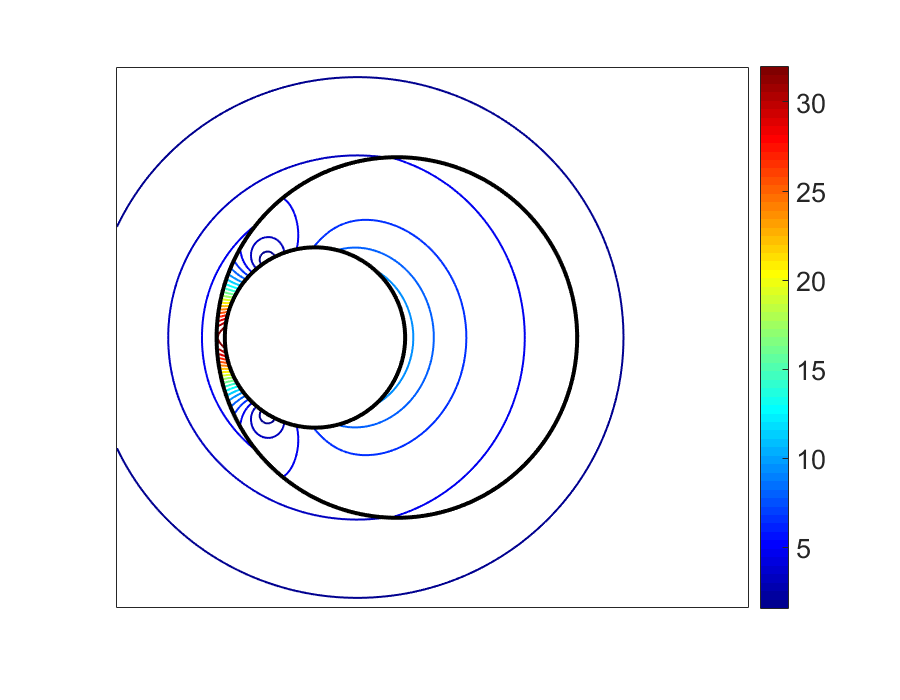}
			\caption{$\ep=0.1$, $H(\Bx)=x$}
		\end{subfigure}
		\begin{subfigure}{.32\textwidth}
			\centering
			\adjincludegraphics[height=4cm, width=5cm,trim={1.0cm 0.5cm 0.7cm 0.6cm}, clip]{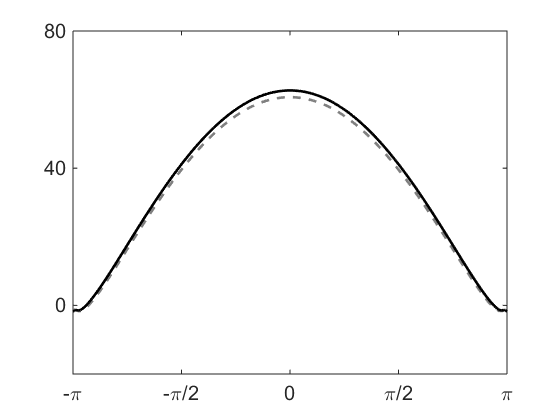}
			\caption{$\nabla (u-H) \cdot \hat{\bold {e}}_\xi$, $\ep=0.001$}
		\end{subfigure}
		\begin{subfigure}{.32\textwidth}
			\centering
			\adjincludegraphics[height=4cm, width=5cm,trim={1.0cm 0.5cm 0.7cm 0.6cm}, clip]{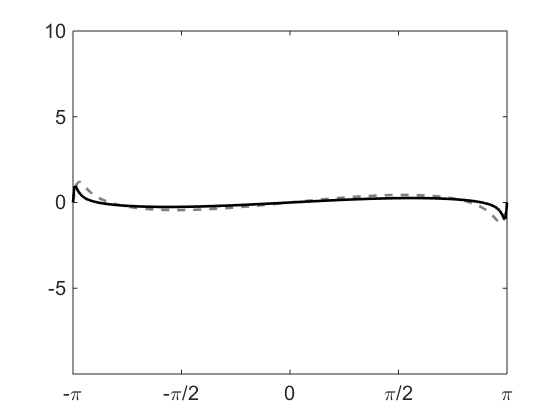}
			\caption{$\nabla (u-H) \cdot \hat{\bold {e}}_\theta$, $\ep=0.001$}
		\end{subfigure}
		\begin{subfigure}{.32\textwidth}
			\centering
			\adjincludegraphics[height=4cm, width=5cm,trim={2.5cm 2.3cm 2cm 1.5cm}, clip]{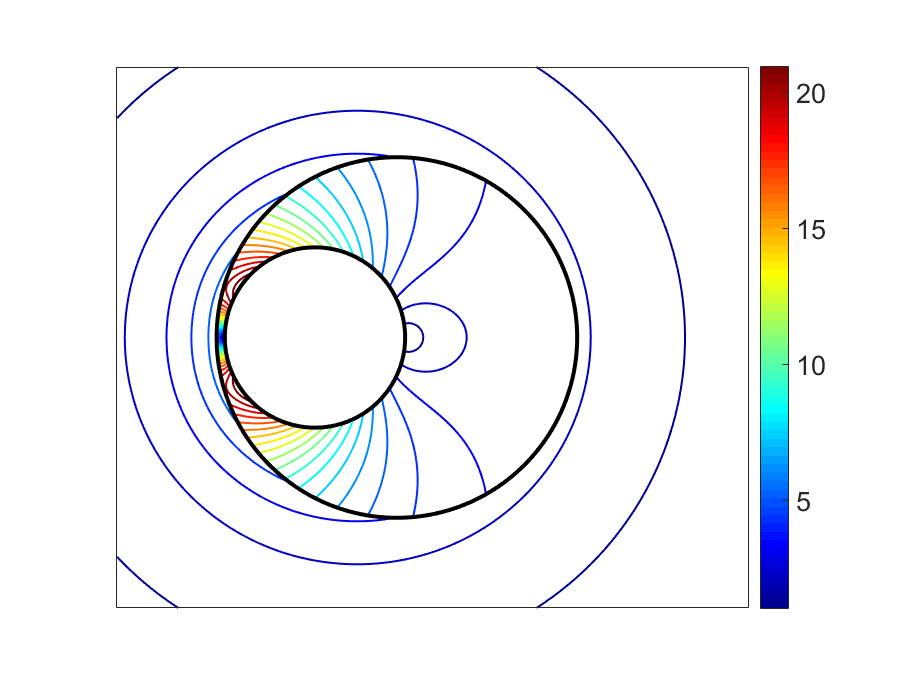}
			\caption{$\ep=0.1$, $H(\Bx)=y$}
		\end{subfigure}
		\begin{subfigure}{.32\textwidth}
			\centering
			\adjincludegraphics[height=4cm, width=5cm,trim={1.0cm 0.5cm 0.7cm 0.6cm}, clip]{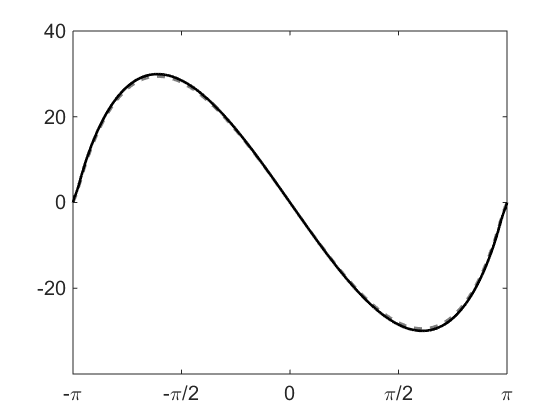}    \caption{$\nabla (u-H) \cdot \hat{\bold {e}}_\xi$, $\ep=0.001$}
		\end{subfigure} 
		\begin{subfigure}{.32\textwidth}
			\centering
			\adjincludegraphics[height=4cm, width=5cm,trim={1.0cm 0.5cm 0.7cm 0.6cm}, clip]{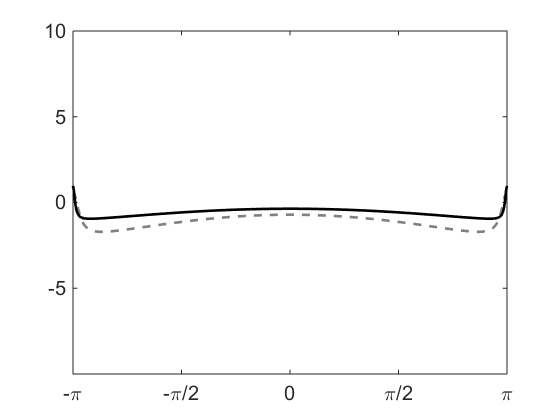}
			\caption{$\nabla (u-H) \cdot \hat{\bold {e}}_\theta$, $\ep=0.001$}
		\end{subfigure} 
			\caption{Electric field for the core-shell structure with $r_i=1$, $r_e=2$, $k_i=1$, and $k_e=0.01$,  where $\ep=0.1$ or $0.001$.
			The figures are contour plots of $|\nabla (u-H)|$ (the first column) and the graphs of $\nabla (u-H) \cdot \hat{\bold {e}}_\xi$ and $\nabla (u-H) \cdot \hat{\bold {e}}_\theta$ on $\p B_e$ (the second and third column, solid curve). Dashed curves are computed by using the singular term in Theorem \ref{blowupterm}(a). The background potential is $H(\Bx)=x$ (the first row) or $H(\Bx)=y$ (the second row).	}\label{fig:result}
	\end{figure}

	\subsection{Optimal bounds}
	Assume that $k_i= 1$ and $k_e \ll 1$. Then, by using \eqnref{P:integral}, one can easily deduce the upper and lower bounds for $\big| (\cosh\xi_i + \cos\theta) P(e^{-\xi_i - \rm{i}\theta};\beta) \big|$. 
	Specifically, we have that
	\begin{align}
	\bigg|\big(\cosh \xi_i + \cos\theta\big) \Re\left\{P(e^{-\xi_i - \rm{i}\theta};\beta) \right\} \bigg|
	&\geq \frac{C_1}{\beta+1}\quad \mbox{ for }\theta=0,\\
	\bigg|\big(\cosh \xi_i + \cos\theta\big) \Im\left\{P(e^{-\xi_i - \rm{i}\theta};\beta) \right\} \bigg|
	&\geq \frac{C_1}{(\beta+1)(\beta+3)}\quad \mbox{for }\theta=\frac{\pi}{2}
	\end{align}
	with some positive constant $C_1$ independent of $\ep,\beta$.
	Indeed, for $\theta = \frac{\pi}{2}$ we have
	\begin{align*}
	\bigg|\Im\left\{P(e^{-\xi_i - \frac{\pi}{2}{\rm i}};\beta) \right\}\bigg|
	&= \int_0 ^\infty e^{-\beta t} \frac{\sinh (\xi_i+t)}{2\cosh^2(\xi_i+t)} dt
	= e^{\beta \xi_i}\int_{\xi_i} ^\infty e^{-\beta t} \frac{e^t-e^{-t}}{(e^t + e^{-t})^2} dt\\
	&\ge e^{\beta \xi_i}\int_{\xi_i} ^\infty e^{-\beta t} \frac{e^t-e^{-t}}{(2e^{t})^2} dt\\
	&\ge \frac{1}{4} \left(\frac{e^{-\xi_i}}{\beta + 1} - \frac{e^{-3\xi_i}}{\beta + 3} \right)\\
	&\ge \frac{C_1}{(\beta+1)(\beta+3)}.
	\end{align*}
	
	Also, we can show that
	\begin{align}
	\Big| (\cosh\xi_i + \cos\theta) P(e^{-\xi_i - \rm{i}\theta};\beta) \Big| \le \frac{C_2}{\beta + 1}
	\end{align}
	for some positive constant $C_2$ independent of $\ep,\beta,\theta$. 
	Note that we have
	$$\frac{1}{\sqrt \ep (\beta + 1)} \approx \frac{1}{r_* k_e + \sqrt \ep} $$
	owing to
	$$\beta = \frac{r_*}{4\sqrt \ep}\big(1-\tau + O((1-\tau)^2)\big) \approx \frac{r_*}{\sqrt \ep} k_e.$$

	In total, assuming $k_i= 1$ and $k_e \ll 1$, the following holds with some positive constant $C_1,C_2$ independent of $k_e,k_i,\ep$:
	\begin{itemize}
		\item[$\bullet$] If $C_x \ne 0$ and $C_y =0$, then we have
		$$
		C_1 \frac{C_x}{k_e + \frac{\sqrt \ep}{r_*}} \le \Vert \nabla (u-H) \Vert_{L^\infty(B_e \setminus B_i)} \le C_2 \frac{C_x}{k_e + \frac{\sqrt \ep}{r_*}}.
		$$
		\item[$\bullet$] If $C_x = 0$, $C_y \ne 0$ and $\frac{r_* k_e}{\sqrt \ep} < C$ for some constant $C$ independent of $\ep$ ({\it i.e.}, $\beta$ is bounded), then we have
		$$
		C_1 \frac{C_y}{k_e + \frac{\sqrt \ep}{r_*}} \le \Vert \nabla (u-H) \Vert_{L^\infty(B_e \setminus B_i)} \le C_2 \frac{C_y}{k_e + \frac{\sqrt \ep}{r_*}}.
		$$
	\end{itemize}

	\section{Image line charge formula}\label{sec:cartesian}
	We obtained an asymptotic formula for the electric field in bipolar coordinates in the previous section, where the singular term is a linear combination of the Lerch transcendent functions. 
	In this section, we rewrite the formula in terms of the Cartesian coordinates. It turns out that the singular term is actually the single and double layer potentials with line charges on the $x$-axis. Also, we derive an asymptotic formula for the case of two separated disks. As well as the core-shell geometry, the singular term for two separated disks can be expressed as the single and double layer potentials with line charges on the $x$-axis.

	\subsection{Line charge formula of the Lerch transcendent function}\label{sec:lerch}
	
	We first express each component of the singular function $q$, that is, a linear combination of the Lerch transcendent function, in the Cartesian coordinates. 
	Before stating the formulas, let us define some notations. Here and after, we denote by $[ \Bx_1, \Bx_2 ]$ the line segment connecting the two points $\Bx_1, \Bx_2$. We also write $f(s,0)=f(s)$ for a function $f$ defined on $\RR$. 

	\begin{lemma} \label{Lerch:xy}
		Assume $\xi_0\geq0$. We denote by $\bold{c_0}=(c_0,0)$ and $r_0$ the center and radius of the level curve $\{\xi=\xi_0\}$ for $\xi_0\neq0$. We denote by $\bold{c_0}=(+\infty,0)$ and $r_0=\infty$ for $\xi_0=0$. We set the charge distribution functions as
		\begin{align*}
		\ds\varphi_+(s)&= 2\alpha \beta e^{2\beta\xi_0}\frac{(s-\alpha)^{\beta-1}}{(s+\alpha)^{\beta+1}}
		\ds,\quad \psi_+(s)=e^{2\beta\xi_0} \left(\frac{s-\alpha}{s+\alpha}\right)^\beta
		\quad \mbox { for } \alpha<s<c_0,\\
		\ds\varphi_-(s)& = 2\alpha \beta e^{2\beta\xi_0} \frac{(s+\alpha)^{\beta-1}}{(s-\alpha)^{\beta+1}},
		\ds\quad\psi_-(s) = -e^{2\beta\xi_0}\left(\frac{s+\alpha}{s-\alpha}\right)^\beta
		\quad \mbox { for } -c_0<s<-\alpha.
		\end{align*}
		Then, the following holds:
		\begin{itemize}
			\item[\rm(a)] For $\xi<2\xi_0$, we have
			$$L(e^{\xi +\rm{i}\theta - 2\xi_0};\beta)= \int_{[\bold{p}_2,\,\bold{c_0}]} \ln|\Bx-\Bs| \varphi_+(\Bs)\,d\sigma(\Bs) + r_+(\Bx)
			-\rm{i}\int_{[\bold{p}_2,\,\bold{c_0}]} \frac{ \langle \Bx-\Bs, \,\Be_2 \rangle}{|\Bx-\Bs|^2} \psi_+(\Bs)\,d\sigma(\Bs),
			$$
			where the remainder term satisfies that $|\nabla r_+(\Bx) |\leq\frac{1}{r_0}$ for all $\Bx$. 
			\item[\rm(b)] For $\xi>-2\xi_0$, we have
			$$L(e^{-\xi -\rm{i}\theta - 2\xi_0};\beta)=\int_{[-\bold{c_0}, \,\bold{p_1}]} \ln|\Bx-\Bs| \varphi_-(\Bs)\,d\sigma(\Bs) + r_-(\Bx)-\rm{i}\int_{[-\bold{c_0},\, \bold{p_1}]} \frac{ \langle \Bx-\Bs,\, \Be_2\rangle}{|\Bx-\Bs|^2} \psi_-(\Bs)\,d\sigma(\Bs),$$
			where the remainder term satisfies 
			that $|\nabla r_-(\Bx) |\leq\frac{1}{r_0}$ for all $\Bx$. 
			
		\end{itemize}
		
	\end{lemma}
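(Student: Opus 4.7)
The approach is to recognize $L(e^{\xi+\rm{i}\theta-2\xi_0};\beta)$ as a disguised Cauchy-type integral over the segment $[\Bp_2,\Bc_0]$, and then re-express it as single- and double-layer potentials by two successive integrations by parts.

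First, I would rewrite the defining integral \eqnref{eqn:Lerch} via $\zeta=e^{-t}$ as $L(z';\beta)=-\int_0^1\frac{z'\zeta^\beta}{1+z'\zeta}\,d\zeta$, valid since $\xi<2\xi_0$ gives $|z'|<1$ for $z'=e^{\xi+\rm{i}\theta-2\xi_0}$. Next I change variables to $s$ via $\zeta=e^{2\xi_0}\frac{s-\alpha}{s+\alpha}$; by \eqnref{bipolar:circle} this is a smooth bijection from $s\in[\alpha,c_0]$ to $\zeta\in[0,1]$, placing the integral on the physical segment $[\Bp_2,\Bc_0]$. The bipolar identity $e^{\xi+\rm{i}\theta}=(\alpha+z)/(\alpha-z)$ simplifies the denominator to $1+z'\zeta=\frac{2\alpha(s-z)}{(\alpha-z)(s+\alpha)}$, and the partial fraction $\frac{\alpha+z}{(s-z)(s+\alpha)}=\frac{1}{s-z}-\frac{1}{s+\alpha}$ rewrites the Lerch function as
\begin{equation*}
L(e^{\xi+\rm{i}\theta-2\xi_0};\beta)=\int_\alpha^{c_0}\frac{\psi_+(s)}{z-s}\,ds+C_0,\qquad C_0:=\int_\alpha^{c_0}\frac{\psi_+(s)}{s+\alpha}\,ds,
\end{equation*}
where $C_0$ is a real constant independent of $\Bx$.

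The second step is to integrate the Cauchy-type integral by parts. A direct differentiation verifies $\psi_+'=\varphi_+$, and $e^{-2\xi_0}=\frac{c_0-\alpha}{c_0+\alpha}$ gives the endpoint values $\psi_+(\alpha)=0$ and $\psi_+(c_0)=1$; hence the antiderivative $-\ln(z-s)$ of $\frac{1}{z-s}$ yields
\begin{equation*}
\int_\alpha^{c_0}\frac{\psi_+(s)}{z-s}\,ds=\int_\alpha^{c_0}\ln(z-s)\,\varphi_+(s)\,ds-\ln(z-c_0).
\end{equation*}
The real part of the first integral on the right is already the single-layer expression in the claim. For the imaginary part, a second integration by parts based on $\frac{d}{ds}\arg(z-s)=\frac{y}{(x-s)^2+y^2}$ gives $\int_\alpha^{c_0}\arg(z-s)\varphi_+(s)\,ds=\arg(z-c_0)-\int_\alpha^{c_0}\frac{y\,\psi_+(s)}{|z-s|^2}\,ds$; the two $\arg(z-c_0)$ contributions cancel against the imaginary part of $-\ln(z-c_0)$, leaving exactly the double-layer term $-\rm{i}\int_{[\Bp_2,\Bc_0]}\frac{\langle\Bx-\Bs,\Be_2\rangle}{|\Bx-\Bs|^2}\psi_+(\Bs)\,d\sigma(\Bs)$. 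Collecting identifies the remainder as $r_+(\Bx)=C_0-\ln|\Bx-\Bc_0|$, whose gradient has magnitude $1/|\Bx-\Bc_0|$, bounded by $1/r_0$ whenever $\Bx$ lies in the exterior of the level curve $\{\xi=\xi_0\}$ (and trivially when $\xi_0=0$ by the convention $r_0=\infty$). Part (b) is obtained verbatim by the mirror substitution over $[-\Bc_0,\Bp_1]$, with the minus sign built into $\psi_-$ absorbing the orientation change.

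The principal obstacle is the gradient estimate: the formula $r_+(\Bx)=C_0-\ln|\Bx-\Bc_0|$ is genuinely singular at $\Bc_0$, and the hypothesis $\xi<2\xi_0$ a priori permits $\Bx$ arbitrarily close to $\Bc_0$ (take $\theta$ near $\pi$). Thus the bound $|\nabla r_+|\le 1/r_0$ is tight only in the subregion $\{\xi\le\xi_0\}$, which is precisely the region where this lemma will be invoked in Section~\ref{sec:cartesian} to transfer the bipolar asymptotic formula into the image-line-charge representation.
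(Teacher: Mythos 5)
Your proof is correct and is, at bottom, the same computation as the paper's: the key step in both is the substitution $e^{-t}=e^{2\xi_0}\frac{s-\alpha}{s+\alpha}$ (equivalently $s(t)=\alpha\frac{1+e^{-(2\xi_0+t)}}{1-e^{-(2\xi_0+t)}}$, the center of $\{\xi=\xi_0+\tfrac{t}{2}\}$) together with the factorization $1+e^{\xi+{\rm i}\theta-2\xi_0}e^{-t}=\frac{1-e^{-(2\xi_0+t)}}{\alpha-z}\,(s(t)-z)$ with $z=z(\xi,\theta)$, which produces exactly your densities $\varphi_+=\psi_+'$ and $\psi_+$. The organization differs only slightly: the paper first integrates by parts in $t$ to get $L(z;\beta)=\beta\int_0^\infty\ln\big(\frac{1+ze^{-t}}{1+z}\big)e^{-\beta t}\,dt$ and then handles the real and imaginary parts by two separate changes of variables, whereas you change variables first, arrive at a single complex Cauchy integral $\int_\alpha^{c_0}\frac{\psi_+(s)}{z-s}\,ds+C_0$, and integrate by parts twice in $s$; your version is a bit more unified (and your $C_0$ is indeed the paper's constant $L(-e^{-2\xi_0};\beta)$), but it buys nothing essentially new. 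Your closing observation is correct and worth recording: since $\Bc_0=\Bx(2\xi_0,\pi)$ lies on the level curve $\{\xi=2\xi_0\}$, points with $\xi<2\xi_0$ can approach $\Bc_0$, so $|\nabla r_+|=1/|\Bx-\Bc_0|$ is bounded by $1/r_0$ only on $\{\xi\le\xi_0\}$ (equivalently $|\Bx-\Bc_0|\ge r_0$), not on all of $\{\xi<2\xi_0\}$ as the statement asserts; the paper's proof makes the same unqualified claim, and the restriction is harmless because the lemma is only invoked where $\xi\le\xi_0$.
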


	\begin{proof}
		By integration by parts we have
		\begin{align*}
		L(z;\beta)&  = -\int_0 ^\infty \frac{ze^{-t}e^{-\beta t} }{1+z e^{-t}}\,dt \\
		& = -\ln (1+z) + \beta \int_0 ^{\infty} \ln(1+ze^{-t}) e^{-\beta t}\, dt \\
		&= \beta \int_0 ^{\infty} \ln \left(\frac{1+ze^{-t}}{1+z}\right) e^{-\beta t} \,dt.
		\end{align*}
		Here, we used the fact that $\beta>0$ to evaluate the integrand at $t=\infty$.
		Owing to $\overline{L(z;\beta)} = L(\overline{z};\beta)$, it follows that
		\begin{align*}
		\Re \left\{L(z;\beta)\right\} &= \frac{1}{2} \left(L(z;\beta)+L(\overline{z};\beta) \right) = \beta \int_0 ^{\infty} \ln \left|\frac{1+ze^{-t}}{1+z}\right| e^{-\beta t} \,dt.
		\end{align*}	
		The imaginary part of $L(z;\beta)$ becomes
		\begin{align}\notag
		\ds\Im \big\{L(z;\beta)\big\} &= \frac{1}{2\rm{i}} \left(L(z;\beta)-L(\overline{z};\beta) \right)\notag \\\notag
		&= -\frac{1}{2\rm{i}} \int_0 ^\infty \frac{ze^{-(\beta+1)t}}{1+z e^{-t}} - \frac{\overline{z} e^{-(\beta+1)t}}{1+\overline{z} e^{-t}} \,dt\\
		\ds&= -\frac{1}{2\rm{i}} \int_0 ^\infty \frac{(z-\overline{z}) e^{-(\beta+1)t}}{|1+z e^{-t}|^2}\,dt
		= - \int_0 ^\infty \frac{\Im\{z\} e^{-(\beta+1)t}}{|1+z e^{-t}|^2}\,dt.\label{linecharge:imaginary}
		\end{align}

		To prove (a), we now assume $\xi<2\xi_0$ and set $z=e^{\xi +\rm{i}\theta - 2\xi_0}$.
		We first consider $\Re \left\{L(z;\beta)\right\}$. 
		Note that $|z| \le 1$. We compute
		\begin{align} 
		1+ze^{-t} & = 1+e^{\xi+\rm{i}\theta} e^{-(2\xi_0 +t)}\notag \\
		&= (1-e^{-(2\xi_0+t)})(1+e^{\xi+\rm{i}\theta})\left( \frac{1}{1+e^{\xi+\rm{i}\theta}} + \frac{e^{-(2\xi_0+t)}}{1-e^{-(2\xi_0+t)}} \right) \notag \\ 
		& = (1-e^{-(2\xi_0+t)})\left(\frac{2\alpha}{\alpha-z(\xi,\theta)}\right) \left(\frac{\alpha - z(\xi,\theta)}{2\alpha} + \frac{e^{-(2\xi_0+t)}}{1-e^{-(2\xi_0+t)}} \right) \notag \\ \label{coordchange}
		& = \frac{1-e^{-(2\xi_0+t)}}{\alpha - z(\xi,\theta)}\left(-z(\xi,\theta) + \alpha \frac{1+e^{-(2\xi_0+t)}}{1-e^{-(2\xi_0+t)}} \right).
		\end{align}
		For ease of estimation, we set
		$$
		s(t)=\alpha \frac{1+e^{-(2\xi_0+t)}}{1-e^{-(2\xi_0+t)}}.$$
		In other words, $$e^{-t} = e^{2\xi_0}\frac{s(t)-\alpha}{s(t)+\alpha}.
		$$
		From \eqnref{coordchange} it follows that 
		$$1+ze^{-t}=\frac{1-e^{-(2\xi_0+t)}}{\alpha - z(\xi,\theta)}(-z(\xi,\theta)+s(t)).$$
		It is immediately clear that $s(t)$ is the center of circle $\{\xi=\xi_0+\frac{t}{2}\}$ owing to \eqnref{bipolar:circle}. In particular, $s(0)=c_0$. Hence we have for $\xi_0 > 0$
		\begin{align*}
		&\Re \left\{L(e^{\xi +\rm{i}\theta - 2\xi_0};\beta)\right\} \\
		&=\beta \int_0 ^{\infty} \ln \left|\frac{1+ze^{-t}}{1+ze^{-0}}\right| e^{-\beta t}\, dt\\
		&= \beta \int_0 ^{\infty} \ln \left| \frac{(1-e^{-(2\xi_0+t)})\left(z(\xi,\theta) - s(t) \right)}{\left(1-e^{-2\xi_0} \right) \left( z(\xi,\theta)-s(0)\right)}\right| e^{-\beta t}\,dt\\
		&=\beta \int_0 ^{\infty} \ln\big|z(\xi,\theta)-s(t)\big| e^{-\beta t} \,dt
		-\beta \int_0 ^{\infty} \ln\big|z(\xi,\theta)-s(0)\big|e^{-\beta t} \,dt
		+ L(-e^{-2\xi_0};\beta).
		\end{align*}
		Due to the fact that
		$$
		ds =\frac{-2\alpha e^{-(2\xi_0+t)}}{\big(1-e^{-(2\xi_0+t)}\big)^2}dt = -\frac{1}{2\alpha}(s+\alpha)(s-\alpha)dt,
		$$
		we have
		\begin{align*}
		\Re \left\{L(e^{\xi +\rm{i}\theta - 2\xi_0};\beta)\right\} 
		&= \beta \int_0 ^{\infty} \ln\big|z(\xi,\theta)-s(t)\big| e^{-\beta t}\, dt - \ln|z(\xi,\theta)-c_0|+L(-e^{-2\xi_0};\beta)\\
		&= 2\alpha \beta e^{2\beta \xi_0} \int_{\alpha} ^{c_0} \ln\big|z(\xi,\theta)-s\big| \frac{(s-\alpha)^{\beta-1}}{(s+\alpha)^{\beta+1}} \,ds + r_+(z(\xi,\theta))
		\end{align*}
		with $$r_+(z(\xi,\theta)) = - \ln\big|z(\xi,\theta)-c_0\big|+L(-e^{-2\xi_0};\beta).$$ One can easily see that 
		$$\big|\nabla r(z(\xi,\theta))\big|\leq\frac{1}{r_0}\quad\mbox{for all }\xi<2\xi_0.$$
		Similar estimates hold for $\xi_0 = 0$.
		
		We now consider $\Im \left\{L(z;\beta)\right\}$. 
		Note that
		\begin{align*}
		\Im \big\{e^{\xi+\rm{i}\theta}\big\} &= \Im \left\{ \frac{\alpha+z(\xi,\theta)}{\alpha - z(\xi,\theta)}\right\}\\
		& = \Im \left\{ \frac{(\alpha+z(\xi,\theta))(\alpha-\overline{z(\xi,\theta)})}{\big|\alpha-z(\xi,\theta)\big|^2}\right\}\\
		&=\frac{2\alpha}{\big|\alpha-z(\xi,\theta)\big|^2} \Im \big\{z(\xi,\theta)\big\}.
		\end{align*}
		Using \eqnref{linecharge:imaginary} and \eqnref{coordchange}, we have
		\begin{align*}
		\Im \left\{L(e^{\xi +\rm{i}\theta - 2\xi_0};\beta)\right\} &=- \int_0 ^\infty \frac{\Im\{e^{\xi +\rm{i}\theta - 2\xi_0}\} e^{-(\beta+1)t}}{\big|1+e^{\xi +\rm{i}\theta - 2\xi_0-t}\big|^2}\,dt\\[2mm]
		&= \int_0 ^\infty \frac{\Im\big\{z(\xi,\theta)\big\}e^{-\beta t} }{\big|z(\xi,\theta) - s(t) \big|^2}\frac{-2\alpha e^{-(2\xi_0+t)}}{\big(1-e^{-(2\xi_0+t)}\big)^2} \,dt\\[2mm]
		&= -e^{2\beta \xi_0} \int_{\alpha} ^{c_0} \frac{\Im \big\{z(\xi,\theta)\big\}}{\big|z(\xi,\theta)-s\big|^2}\left(\frac{s-\alpha}{s+\alpha}\right)^\beta\, ds.
		\end{align*}
		Note that $\Im \big\{ z(\xi,\theta) \big\} = \langle \Bx-\Bs, \Be_2\rangle$.
		Hence we complete the proof of (a).
		
		We can prove (b) in a similar way, where the remainder term is $r_-(\Bx) = - \ln\big|z(\xi,\theta)+c_0\big|+L(-e^{-2\xi_0};\beta)$. 
		 \end{proof}

	\subsection{Asymptotic formula for the core-shell geometry}

	We now express the singular part of $u$ in terms of the single and double layer potential with image line charges on the $x$-axis. 
	
	\begin{theorem} \label{asymptotic}
		Let $u$ be the solution to \eqnref{Eqn} corresponding to an entire harmonic function $H$. 
		For any $0<k_i,k_e\neq 1<\infty$, we have that $\|\nabla (u-H)\|_{L^\infty(\RR^2\setminus\overline{(B_e\setminus B_i)})}\leq C$ for some constant $C$ independent of $\ep$.
		The gradient blow-up for $u$ may occur only when $\tau_i,\tau_e$ are similar to $1$ (that is, equivalently, $0<k_e\ll 1,k_i$). 
		
		For $k_i, k_e$ such that $0<k_e<1$ and $k_e<k_i$, we have that in $B_e\setminus B_i$,
		\begin{align*}
		u(\Bx)&=  H(\Bx)+ \frac{C_x r_*^2}{2} \left[ \int_{((-\infty,0),\, \bold{p_1}]} \ln|\Bx-\Bs|\varphi_1(\Bs) \,d\sigma(\Bs) -\int_{[\bold{p}_2,\,\bold{c_i}]} \ln|\Bx-\Bs| \varphi_2(\Bs)\, d\sigma(\Bs) \right] \\ 
		& + \frac{C_yr_*^2}{2}\left[ \int_{((-\infty,0), \,\bold{p_1}]} \frac{ \langle \Bx-\Bs, \Be_2\rangle}{|\Bx-\Bs|^2} \psi_1(\Bs)\, d\sigma(\Bs)  -\int_{[\bold{p}_2,\,\bold{c_i}]} \frac{ \langle \Bx-\Bs, \Be_2 \rangle}{|\Bx-\Bs|^2} \psi_2(\Bs)\, d\sigma(\Bs)\right]  + r(\Bx),
		\end{align*}
		where the charge distributions are
		\begin{align*}
		\varphi_1(s) &= \left(\tau +\tau_e \right)2\alpha\beta \, \frac{(s+\alpha)^{\beta-1}}{(s-\alpha)^{\beta+1}}\,\chi_{(-\infty,-\alpha]}(s),\\
		\psi_1(s) &= \left(\tau+\tau_e\right) \left(\frac{s+\alpha}{s-\alpha}\right)^\beta\chi_{(-\infty,-\alpha]}(s),\\
		\varphi_2(s) &=   \left(\tau + \tau_i\right)2\alpha \beta e^{2\beta \xi_i} \, \frac{(s-\alpha)^{\beta-1}}{(s+\alpha)^{\beta+1}}\,\chi_{[\alpha,\,c_i]}(s),\\
		\psi_2(s) &= \left(\tau+\tau_i\right) e^{2\beta \xi_i}\left(\frac{s-\alpha}{s+\alpha}\right)^\beta\chi_{[\alpha,\,c_i]}(s),
		\end{align*}
		and $|\nabla r(\Bx)|$ is uniformly bounded independently of $\ep$ and $\Bx$.
	\end{theorem}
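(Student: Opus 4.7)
My plan is to reduce the problem to the bipolar-coordinate asymptotics of Theorem~\ref{blowupterm}, then convert each Lerch transcendent in the singular function $q$ to a Cartesian line integral via Lemma~\ref{Lerch:xy}. By Theorem~\ref{main}, only the linear part $H(\Bx) = C_x x + C_y y$ of the applied field matters for the blow-up, the rest contributing to a uniformly bounded remainder. The claim $\|\nabla(u-H)\|_{L^\infty(\RR^2 \setminus \overline{B_e \setminus B_i})} \leq C$ and the restriction that gradient blow-up requires $\tau_i,\tau_e \approx 1$ are direct consequences of Theorem~\ref{blowupterm}(b). I am thus reduced to analysing, for $\Bx \in B_e \setminus B_i$, the formula from Theorem~\ref{blowupterm}(a),
\begin{align*}
u(\Bx) - H(\Bx) &= C_x r_*^2 \Re\{q(\Bx)\} + C_y r_*^2 \Im\{q(\Bx)\} + \tilde r(\Bx), \\
q(\Bx) &= \tfrac{1}{2}\bigl[-(\tau + \tau_i)\, L(e^{\xi - {\rm i}\theta - 2\xi_i};\beta) + (\tau + \tau_e)\, L(e^{-\xi - {\rm i}\theta};\beta)\bigr],
\end{align*}
with $|\nabla \tilde r|$ uniformly bounded in $\ep$.

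I next apply Lemma~\ref{Lerch:xy} to both Lerch pieces. Since the defining integral~(\ref{eqn:Lerch}) yields $L(\bar z;\beta) = \overline{L(z;\beta)}$, the function $L(e^{\xi - {\rm i}\theta - 2\xi_i};\beta)$ is the complex conjugate of $L(e^{\xi + {\rm i}\theta - 2\xi_i};\beta)$. Hence Lemma~\ref{Lerch:xy}(a) with $\xi_0 = \xi_i$ (so $\Bc_0 = \Bc_i$ and $r_0 = \alpha/\sinh\xi_i \sim r_*$ by~(\ref{def:rstar})) expresses it as a logarithmic single-layer integral over $[\Bp_2, \Bc_i]$ with density $\varphi_+$, plus $+{\rm i}$ (after conjugation) times a Cauchy-type integral with density $\psi_+$, plus a remainder whose gradient is bounded by $1/r_0$. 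Similarly, Lemma~\ref{Lerch:xy}(b) with $\xi_0 = 0$ converts $L(e^{-\xi - {\rm i}\theta};\beta)$ into integrals over the half-line $((-\infty, 0), \Bp_1]$ with densities $\varphi_-, \psi_-$ and a zero-gradient remainder (since $r_0 = \infty$).

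Substituting and separating real and imaginary parts gives the target formula. The real part produces
\begin{align*}
C_x r_*^2 \Re\{q\} = \tfrac{C_x r_*^2}{2}\Bigl[-(\tau + \tau_i) \int_{[\Bp_2, \Bc_i]} \ln|\Bx - \Bs|\,\varphi_+(\Bs)\, d\sigma(\Bs) + (\tau + \tau_e) \int_{((-\infty, 0), \Bp_1]} \ln|\Bx - \Bs|\,\varphi_-(\Bs)\, d\sigma(\Bs)\Bigr] + O(1),
\end{align*}
so that the identification $\varphi_1 = (\tau + \tau_e)\varphi_-$, $\varphi_2 = (\tau + \tau_i)\varphi_+$ matches the statement. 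For the imaginary part, the complex conjugation in the first Lerch function flips the sign of the $\psi_+$ integral, and the intrinsic minus sign in $\psi_-(s) = -((s+\alpha)/(s-\alpha))^\beta$ combines with the $-{\rm i}$ in Lemma~\ref{Lerch:xy}(b) to produce the positive $(\tau + \tau_e)$ coefficient in front of the half-line integral; this yields $\psi_1, \psi_2$ as claimed. All bounded-gradient terms ($\tilde r$, the two $r_\pm$, and the nonlinear contribution of $H$) are collected into the single remainder $r(\Bx)$.

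The argument is essentially substitution plus careful bookkeeping. The main obstacle is the sign and complex-conjugation tracking when invoking Lemma~\ref{Lerch:xy} (stated for $e^{+{\rm i}\theta}$) for the first Lerch function, which carries $e^{-{\rm i}\theta}$, together with the verification that the shell-side scale $r_0 = \alpha/\sinh\xi_i$ is bounded below by a positive multiple of $r_*$ independent of $\ep$, so that $|\nabla r_+|$ remains uniformly bounded as $\ep \to 0$.
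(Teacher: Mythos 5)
Your proposal is correct and follows exactly the paper's (one-line) proof: substitute Lemma \ref{Lerch:xy} into the shell-region formula of Theorem \ref{blowupterm}(a), using $L(\bar z;\beta)=\overline{L(z;\beta)}$ to handle the $e^{-{\rm i}\theta}$ arguments, and your sign bookkeeping for $\varphi_1,\varphi_2,\psi_1,\psi_2$ checks out. The only (inconsequential) imprecision is that for $\xi_0=\xi_i$ the scale $r_0=\alpha/\sinh\xi_i$ equals exactly $r_i$ by \eqnref{bipolar:circle}, not $r_*$, but either way $|\nabla r_+|$ is bounded independently of $\ep$.
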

	\begin{proof}
		Applying Lemma \ref{Lerch:xy} to Theorem \ref{blowupterm} (a) we prove the theorem.
		 \end{proof}

	\begin{figure}[!]
		\centering
		\includegraphics[width=7cm,height=4.2cm,trim={2cm 3cm 2cm 2.5cm}, clip]{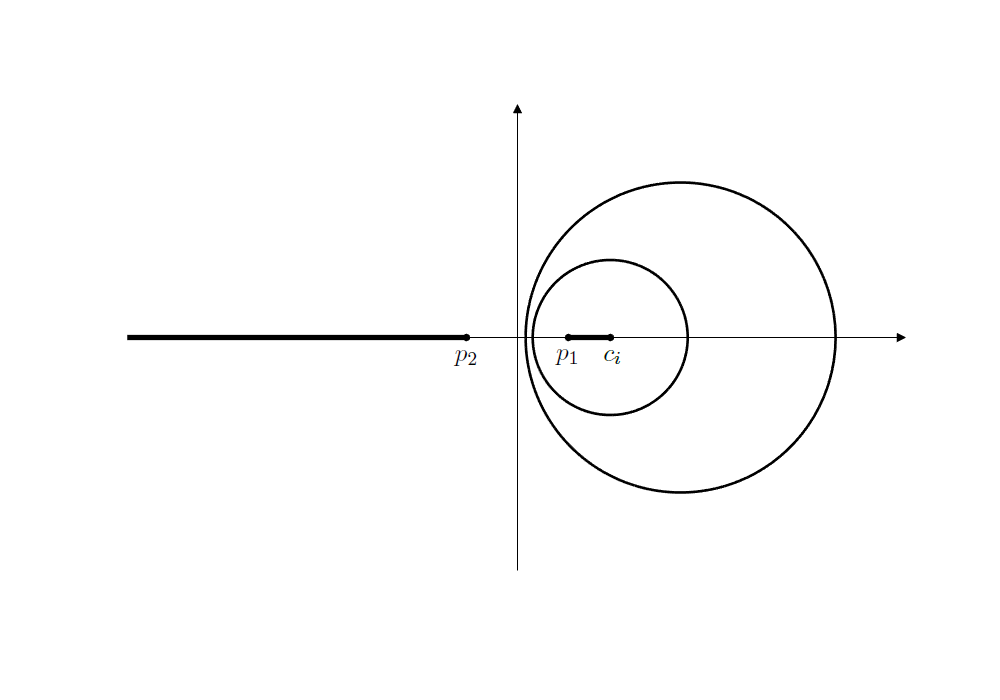}
		\caption{Bold intervals indicate the support of line charges for the core-shell structure.}
		\label{fig:support:core_shell}
		\vskip .2cm
		\begin{subfigure}{1\textwidth}
			\centering
			\adjincludegraphics[width=6cm, height=4.5cm, trim={.8cm .5cm .5cm .4cm}, clip]{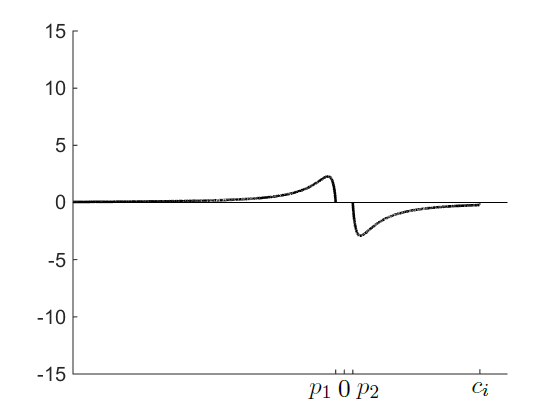}
			\hskip 1cm
			\adjincludegraphics[width=6cm, height=4.5cm, trim={.8cm .5cm .5cm .4cm}, clip]{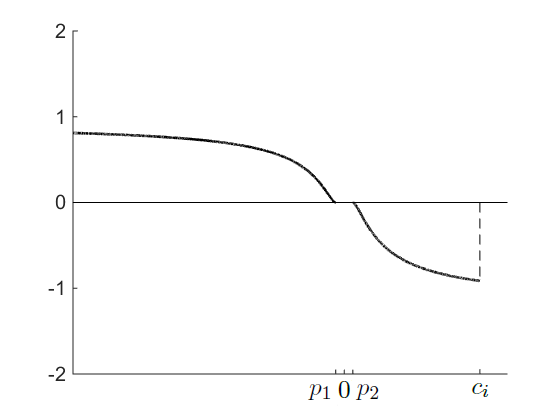}
			\caption{ $k_i = 1,\, k_e=0.03,\, \ep = 0.001$ ($\beta = 1.8979, \tau = 0.8869$)}
		\end{subfigure}	
		\begin{subfigure}{1\textwidth}
			\centering
			\adjincludegraphics[width=6cm,height=4.5cm,trim={.8cm .5cm .5cm .4cm}, clip]{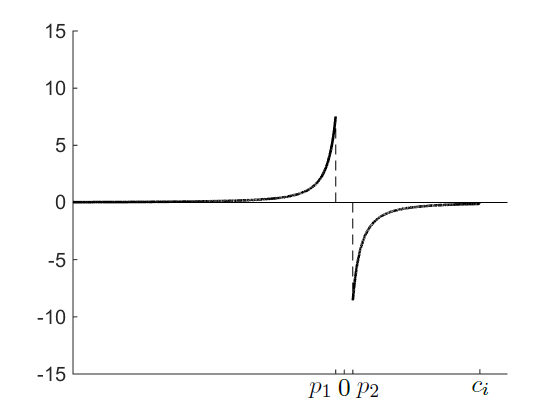}
			\hskip 1cm
			\adjincludegraphics[width=6cm,height=4.5cm,trim={.8cm .5cm .5cm .4cm}, clip]{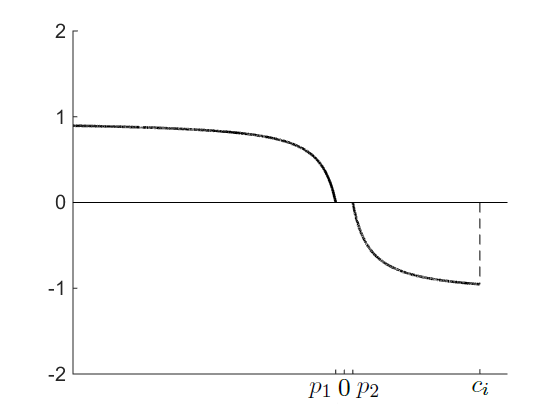}
			\caption{$k_i = 1,\, k_e= 0.0158,\, \ep = 0.001$ ($\beta=1, \tau=0.9387$)}
		\end{subfigure}	
		\begin{subfigure}{1\textwidth}
			\centering
			\adjincludegraphics[width=6cm,height=4.5cm,trim={.8cm .5cm .5cm .4cm}, clip]{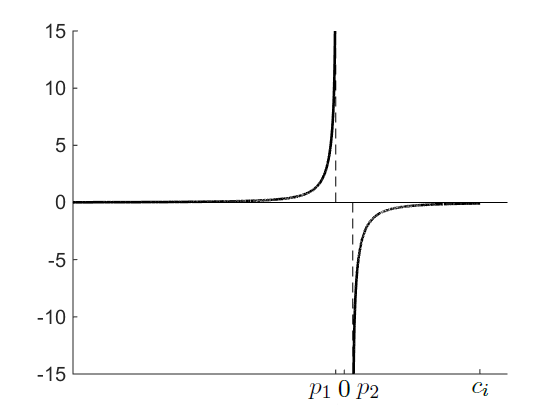}
			\hskip 1cm
			\adjincludegraphics[width=6cm,height=4.5cm,trim={.8cm .3cm .5cm .4cm}, clip]{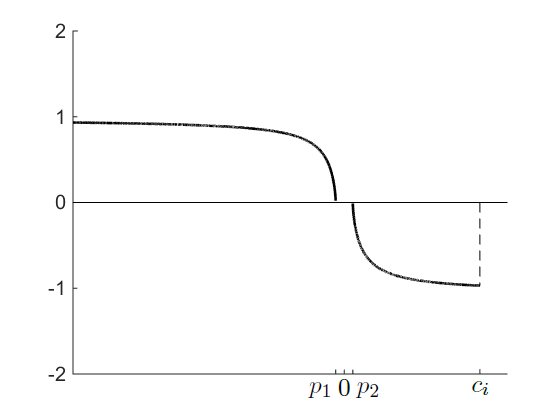}
			\caption{$k_i = 1,\, k_e=0.01,\, \ep = 0.001$ ($\beta = 0.6325, \tau = 0.9608$)}
		\end{subfigure}	
\caption{Image line charges for the core-shell structure with $r_i=1,\, r_e=2,\, k_i=1,\, \ep=0.001$ and various $k_e$-values. The left column shows $\frac{1}{2}(\varphi_1-\varphi_2)$, and the right column shows $\frac{1}{2}(\psi_1-\psi_2)$ in Theorem \ref{asymptotic}. 
	As $k_e$ decreases, $\beta$ approaches to $0$. The density $\frac{1}{2}(\varphi_1-\varphi_2)$ is bounded if $\beta\leq 1$, but diverges to $\infty$ near $p_1$ and $p_2$ if $\beta<1$.}\label{fig:density:core_shell}
	\end{figure}

\begin{figure}[!]
	\begin{center}
		\begin{subfigure}{1\textwidth}
			\centering
			\adjincludegraphics[width=6cm, height=4.5cm, trim={.8cm .3cm .5cm .4cm}, clip]{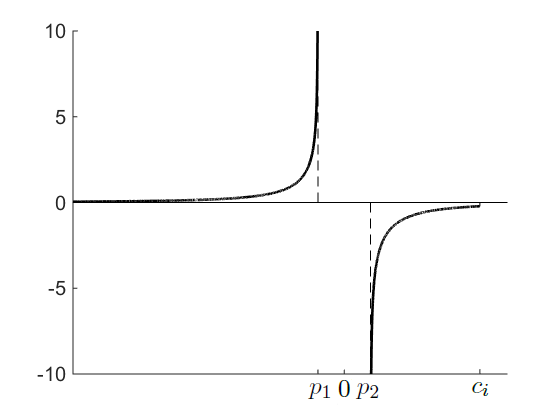}
			\hskip 1cm
			\adjincludegraphics[width=6cm, height=4.5cm,trim={.8cm .3cm .5cm .4cm}, clip]{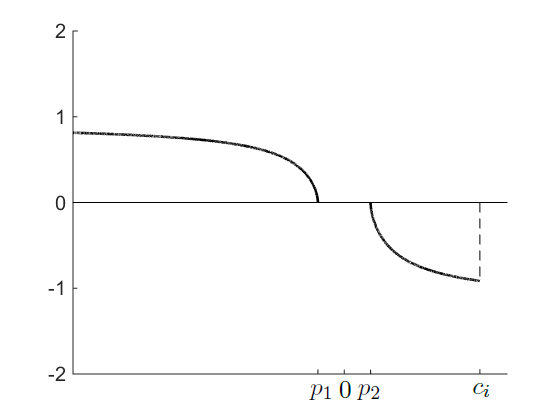}
			\caption{$k_i = 1,\, k_e = 0.03,\, \ep=0.01$ ($\beta = 0.6002, \tau = 0.8869$)}
		\end{subfigure} 
		\begin{subfigure}{1\textwidth}
			\centering
			\adjincludegraphics[width=6cm,height=4.5cm,trim={.8cm .5cm .5cm .4cm}, clip]{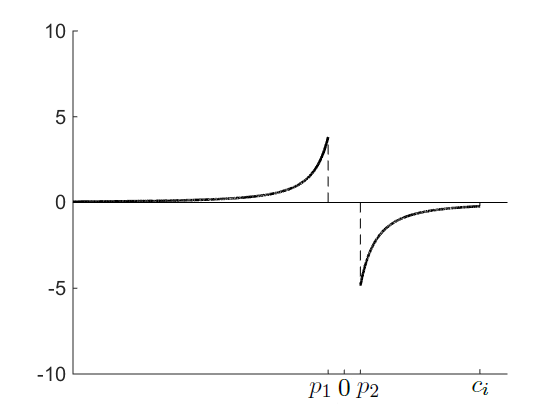}
			\hskip 1cm
			\adjincludegraphics[width=6cm,height=4.5cm,trim={.8cm .5cm .5cm .4cm}, clip]{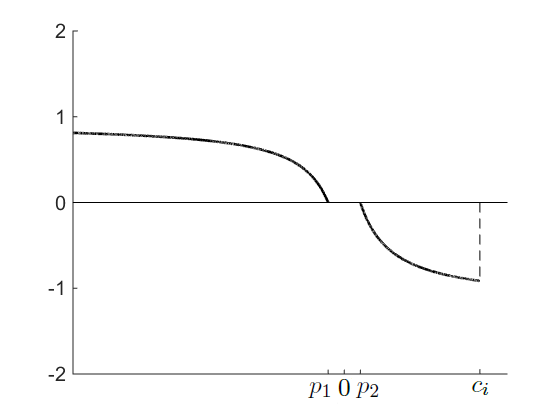}
			\caption{$k_i = 1,\, k_e = 0.03,\,\ep=0.0036$ ($\beta=1, \tau = 0.8869$)}
		\end{subfigure} 
		\begin{subfigure}{1\textwidth}
			\centering
			\adjincludegraphics[width=6cm,height=4.5cm, trim={.8cm .5cm .5cm .4cm}, clip]{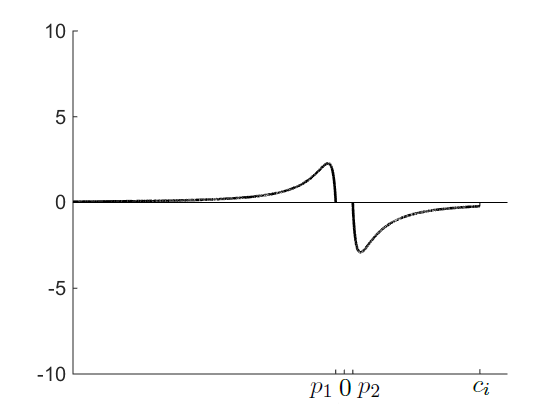}
			\hskip 1cm
			\adjincludegraphics[width=6cm,height=4.5cm, trim={.8cm .5cm .5cm .4cm}, clip]{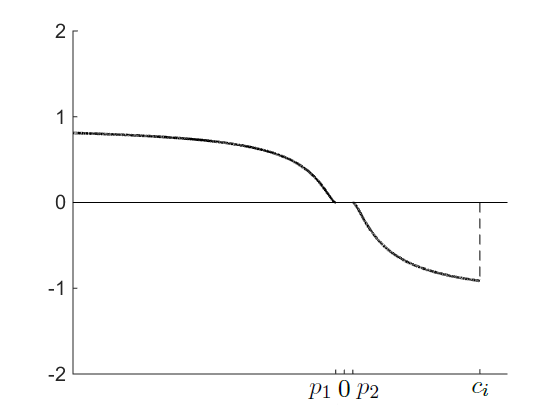}
			\caption{$k_i = 1,\, k_e = 0.03,\,\ep=0.001$ ($\beta = 1.8979, \tau = 0.8869$)}
		\end{subfigure} 
		
		\caption{Image line charges for the core-shell structure with $r_i=1,\, r_e=2,\, k_i=1,\,k_e=0.03$ and various $\ep$-values. The left column shows $\frac{1}{2}(\varphi_1-\varphi_2)$, and the right column shows $\frac{1}{2}(\psi_1-\psi_2)$ in Theorem \ref{asymptotic}. The support of line charges for the core-shell structure is drawn in Figure \ref{fig:support:core_shell}. As $\ep$ decreases, $\beta$ increases so that $\frac{1}{2}(\varphi_1-\varphi_2)$ becomes bounded.}\label{fig:density:core_shell_2}
	\end{center}
\end{figure}
	
	Figure \ref{fig:support:core_shell} shows the support of the density functions in Theorem \ref{asymptotic}. 
	One can easily see that  $\frac{\p \psi_j (s)}{\p s} = \varphi_j (s)$ for $j=1,2$ and from \eqnref{bipolar:circle},
	\begin{gather*}
	\int \varphi_1(s)\,ds=\tau+\tau_e,\quad \int\varphi_2(s)\,ds=\tau+\tau_i.
	\end{gather*}
	The density functions show a different feature depending on $\beta$. The functions $\varphi_1$ and $\varphi_2$ are bounded if $\beta\leq 1$, but they diverge to $\pm\infty$ near $p_1$ or $p_2$ if $\beta<1$. See Figure \ref{fig:density:core_shell} and Figure \ref{fig:density:core_shell_2}.

	

	
	\begin{remark}
		When $\beta=0$ (equivalently $\tau_i=\tau_e=1$), which is the limiting case of $|\ln \tau|\ll \sqrt{\ep}$, we have that \begin{align*}
		L(e^{\xi+\rm{i}\theta - 2\xi_0}; 0) &= \ln|\Bx-\Bp_2| + \widetilde{r}_+ (\Bx) - {\rm i}\int_{[\bold{p}_2,\,\bold{c_0}]} \frac{ \langle \Bx-\Bs, \Be_2 \rangle}{|\Bx-\Bs|^2}\, d\Bs,\quad \xi<2\xi_0,\\
		L(e^{-\xi-\rm{i}\theta - 2\xi_0}; 0) &= \ln|\Bx-\Bp_1| + \widetilde{r}_- (\Bx) + {\rm i}\int_{[-\bold{c_0}, \,\bold{p_1}]} \frac{ \langle \Bx-\Bs, \Be_2\rangle}{|\Bx-\Bs|^2}\, d\Bs, \quad \xi>-2\xi_0,
		\end{align*}
		where $|\nabla \widetilde{r}_+(\Bx)|$ and $|\nabla \widetilde{r}_-(\Bx)|$ are bounded independently of $\ep,\Bx$.
		Hence, the singular term of $u$ in \eqnref{u:singular} satisfies
		\begin{align}
		&C_x r_*^2\Re\big\{q\big(\Bx;0,1, 1\big)\big\} + C_y r_*^2\Im\big\{q\big(\Bx;0,1, 1\big)\big\}\notag \\[1mm]\label{beta_zero}
		=\ &  C_x r_*^2  \ln\left|\frac{\Bx-\Bp_1}{\Bx-\Bp_2}\right| + C_y r_*^2 \int_{[\bold{p}_2,\,\bold{c_0}] \cup [-\bold{c_0}, \,\bold{p_1}]} \frac{ \langle \Bx-\Bs, \, \Be_2 \rangle}{|\Bx-\Bs|^2} \,d\Bs +\widetilde{r}(\Bx),
		\quad \Bx\in B_e \setminus B_i,
		\end{align}
		where $|\nabla \widetilde{r}(\Bx)|$ are bounded independently of $\ep,\Bx$.
		\smallskip
		
		Recall that we denote by $\Bx_i=(x_i,0)$ and $\Bx_e=(x_e,0)$ the closest points on $\p  B_i$ and $\p B_e$ between $\p B_i$ and $\p B_e$, respectively. The gradient of the function in \eqnref{beta_zero} blows up at $\Bx_e$ as $\epsilon$ tends zero, since the distance between $\Bp_1$ and $\Bp_2$ is $O(\sqrt{\epsilon})$. 
		However, the solution to the following problem does not blow up:
		\begin{equation} \label{Eqn_insul}
		\left\{ \begin{array}{ll}
		\ds \Delta u_0 = 0 \qquad& \textrm{in } \mathbb{R}^2\setminus \overline{B_e}, \\
		\ds \pd{u_0}{\nu}=0\quad&\mbox{on }\p B_e,\\
		\ds u_0(\bold{x}) - H(\bold{x}) = O(|\bold{x}|^{-1}) \qquad& \textrm{as } |\bold{x}| \to \infty.
		\end{array} \right.
		\end{equation}
		These facts imply that $\nabla u(\Bx_e)$ for the solution $u$ to \eqnref{Eqn} does not converge to $\nabla u_0(\Bx_e)$ when $k_e\rightarrow\infty$ and $\ep\rightarrow 0$, satisfying that $k_i$ is fixed and $|\ln \tau|\ll \sqrt{\ep}$. 
		See \cite[section 5.3]{LYu2D} for a non-convergence result for $\nabla u$ in the case of separated disks. 
	\end{remark}
	
	
	\subsection{Asymptotic formula for the case of two separated disks}
	We now consider the gradient blow-up feature for the solution $u$ to \eqnref{Eqn} in the presence of two separated disks, say $B_1$ and $B_2$. In other words, the conductivity profile is given by
	\beq\label{sigma:separated}
	\sigma = k_1 \chi(B_1) + k_2 \chi(B_2) + \chi \big(\mathbb{R}^2 \setminus (B_1 \cup B_2)\big).
	\eeq
	As before we assume that $0<k_i,k_e\neq 1<\infty$.
	{For two separated disks with arbitrary conductivities, the asymptotic formula for the potential function in bipolar coordinates was derived in \cite{LYu2D}.
		In this section, we express the singular term by the single and double layer potential with line charges.}

	We set $B_j$ to be the disk centered at $(c_j,0)$ with radius $r_j$, $j=1,2$. We denote by $\ep$ the distance between $B_1$ and $B_2$; see Figure \ref{fig:support:separated} for the geometry of the two separated disks. We set $R_j$ to be the reflection with respect to $\p B_j$, $j=1,2$. We can assume that, by appropriate shifting and rotation, the fixed points of the combined reflection $R_1 \circ R_2$ are $\Bp_1=(-\alpha,0),~ \Bp_2 = (\alpha,0)$ with
	$$
	\alpha = \frac{\sqrt{\epsilon(2r_1+\epsilon)(2r_2+\epsilon)(2r_1+2r_2+\epsilon)}}{2(r_1+r_2+\epsilon)}.
	$$
	As for the core-shell geometry,  we denote by $(\xi,\theta)$ the bipolar coordinates defined with respect to the poles $\Bp_1$ and $\Bp_2$.
	
	We now set $$r_* = \sqrt{\frac{2r_1r_2}{r_1+r_2}}$$  and 
	$$\tau_1=\frac{k_1-1}{k_1+1}, \quad \tau_2=\frac{k_2-1}{k_2+1}, \quad \tau=\tau_1\tau_2, \quad \beta = \frac{r_*(-\ln \tau)}{4\sqrt{\epsilon}}.$$

	\begin{lemma}[\cite{LYu2D}]
		Let $u$ be the solution to \eqnref{Eqn} with $\sigma$ given by \eqnref{sigma:separated}.
		Then $u$ satisfies that
		\beq\label{u:singular_disjoint}
		u(\Bx) = H(\Bx) + C_x r_*^2\Re\big\{\widetilde{q}\big(\Bx;\beta,\tau_i, \tau_e\big)\big\} + C_y r_*^2\Im\big\{\widetilde{q}\big(\Bx;\beta,-\tau_i, -\tau_e\big)\big\} + r(\Bx),\quad \Bx\in\RR^2,
		\eeq
		where 
		\begin{align*}
		&\widetilde{q}(\Bx;\beta,\tau_1,\tau_2) \\
		&:= \frac{1}{2}
		\begin{cases}
		\ds(\tau + \tau_1) L(e^{-(\xi+\rm{i}\theta)-2\xi_1};\beta) - (\tau + \tau_2) L(e^{(\xi+\rm{i}\theta)-2\xi_2};\beta) & \mbox{in } \RR^2 \setminus \overline{(B_1 \cup B_2)}, \\[.5mm]
		\ds(\tau + \tau_1) L(e^{\xi-\rm{i}\theta};\beta) - (\tau + \tau_2) L(e^{(\xi+\rm{i}\theta)-2\xi_2};\beta) & \mbox{in } B_1 ,\\[.5mm]
		\ds(\tau + \tau_1) L(e^{-(\xi+\rm{i}\theta)-2\xi_1};\beta) - (\tau + \tau_2) L(e^{-\xi+\rm{i}\theta};\beta) & \mbox{in } B_2,
		\end{cases}
		\end{align*}
		and $\Vert \nabla r(\Bx) \Vert_{L^\infty(\RR^2)} < C$ for some constant $C$ independent of $\epsilon$. 
	\end{lemma}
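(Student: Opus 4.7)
The plan is to mirror the derivation of Theorem \ref{blowupterm} step by step, adapting each ingredient to the geometry of two disjoint disks. Since the decomposition argument in Section \ref{subsec:decomposition} depends only on the bipolar coordinate identities in Section \ref{sec:bipolar} and on the layer potential machinery of Section \ref{sec:layer}, which apply verbatim once we take $B_1 = \{\xi<\xi_1\}$ and $B_2 = \{\xi>\xi_2\}$ with $\xi_1<0<\xi_2$, I would first establish the analog of Theorem \ref{main} in the separated-disks setting. This reduces the problem to $H(\Bx)=C_x x + C_y y$, because the tail contribution from $H(\Bx)-\nabla H(\bold{0})\cdot\Bx$ is bounded independently of $\ep$ by the same $L^\infty$ bounds on the density functions that were deduced from Lemma \ref{coshsum} (with the only change that the reflection identities $(R_1 R_2)^m H(\xi,\theta) = H(2m(\xi_2-\xi_1)+\xi,\theta)$ replace the core-shell identities).

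Next, I would obtain an explicit series solution analogous to Lemma \ref{lemma:transmissionseries}. Writing $x$ and $y$ in the bipolar Fourier basis $\{e^{\pm n\xi}\cos n\theta,\, e^{\pm n\xi}\sin n\theta\}_{n\geq 1}$ and imposing continuity of $u$ together with the flux conditions on both $\{\xi=\xi_1\}$ and $\{\xi=\xi_2\}$, one arrives for each Fourier mode at a $2\times 2$ linear system whose determinant is $\tau^{-1}e^{2n(\xi_2-\xi_1)}-1$; the solution is a geometric series in $\tau\, e^{-2n(\xi_2-\xi_1)}$ with the prefactors $(1+\tau_1^{-1})$ and $(1+\tau_2^{-1})$. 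Summing first in $m$ (the geometric index) and then in $n$, the $\cos n\theta$ part yields $\Re\{\widetilde q(\Bx;\beta,\tau_1,\tau_2)\}$ and the $\sin n\theta$ part yields $\Im\{\widetilde q(\Bx;\beta,-\tau_1,-\tau_2)\}$, with the sign flip arising because $y = -2\alpha\sum (-1)^n e^{-n|\xi|}\sin n\theta$ carries the opposite sign pattern from the $x$ expansion.

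The passage from these double sums to the Lerch transcendents $L(e^{\pm\xi\mp\rm{i}\theta - 2\xi_j};\beta)$ is then carried out by the key approximation Lemma \ref{lemma:LYu2D:series} applied termwise in $n$, with parameter $a_0 = 2(\xi_2-\xi_1) = \frac{4}{r_*}\sqrt{\ep} + O(\ep\sqrt{\ep})$ so that $\frac{-\ln\tau}{a_0} = \beta$ to leading order. The error produced by Lemma \ref{lemma:LYu2D:series}, when multiplied by the scale factor $h(\xi,\theta)$ in \eqref{eqn:nabla}, gives exactly the bounded remainder $r(\Bx)$, precisely as in the proof of \eqref{estimate:nablaU}. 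Piecewise matching of the three regions (exterior, $B_1$, $B_2$) produces the three cases in the definition of $\widetilde q$.

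The main obstacle will be Step 3, namely carefully tracking sign conventions and the asymmetry between the roles of $\xi_1<0$ and $\xi_2>0$: one must verify that the $L$-arguments $e^{\pm(\xi+\rm i\theta)-2\xi_j}$ assemble correctly in each region, and that the substitution $(\tau_1,\tau_2)\mapsto(-\tau_1,-\tau_2)$ genuinely accounts for the difference between the real and imaginary parts without spurious boundary terms. Once this bookkeeping is done, the uniform boundedness of $|\nabla r|$ follows by repeating the estimates in \eqref{Pproperty1}--\eqref{Pproperty2} together with the bound on alternating partial sums already used in the corollary to Theorem \ref{main}, and the lemma is established.
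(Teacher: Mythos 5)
The paper offers no proof of this lemma---it is imported verbatim from \cite{LYu2D}---but your plan correctly reconstructs the argument of that reference, which is the same strategy this paper executes for the core-shell case in Theorem \ref{blowupterm}: reduce to linear $H$ via the boundedness of the remainder (known for separated disks from \cite{AKLLZ}), solve by separation of variables in bipolar coordinates with the mode-wise resolvent denominator $\tau^{-1}e^{2n(\xi_2-\xi_1)}-1$, expand it as a geometric series, and pass to the Lerch transcendent via Lemma \ref{lemma:LYu2D:series}. One minor correction to the bookkeeping: Lemma \ref{lemma:LYu2D:series} is applied to the sum over the geometric (reflection) index $m$ after the Fourier index $n$ has been summed in closed form---i.e., one sums in $n$ first and then approximates the remaining $m$-sum by the integral, the reverse of the order you describe---but this does not affect the viability of the plan.
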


	Using the line charge formula of the Lerch transcendent function in section \ref{sec:lerch}, one can easily show the following.
	\begin{theorem}\label{thm:imagecharge:separate} Let $u$ be the solution to \eqnref{Eqn} with $\sigma$ as given by \eqnref{sigma:separated}.
		The gradient blow-up for $u$ may occur only when $\tau$ is similar to $1$ (equivalently, $k_1,k_2\gg 1$ or $0<k_1,k_1\ll 1$). 
		Moreover, $u$ satisfies the following integral formulas in $\RR^2 \setminus (B_1 \cup B_2)$:
		\begin{itemize}
			\item[\rm(a)] For $1<k_1, k_2 < \infty$, we have 
			\begin{align*}
			u(\Bx) = H(\Bx) + \frac{C_x r_*^2}{2} \left[ \int_{[\Bc_1, \Bp_1]} \ln|\Bx-\Bs| \varphi_1(\Bs)\, d\sigma(\Bs) -\int_{[\bold{p}_2,\Bc_2]} \ln|\Bx-\Bs| \varphi_2(\Bs)\, d\sigma(\Bs) \right] + r(\Bx),
			\end{align*}
			where the charge distributions are
			\begin{align*}
			\varphi_1 (s) & = (\tau + \tau_1)  2\alpha \beta e^{2\beta\xi_1} \, \frac{(s+\alpha)^{\beta-1}}{(s-\alpha)^{\beta+1}}\,\chi_{[c_1,-\alpha]}(s),\\
			\varphi_2 (s) & = (\tau + \tau_2) 2\alpha \beta e^{2\beta\xi_2} \,  \frac{(s-\alpha)^{\beta-1}}{(s+\alpha)^{\beta+1}}\,\chi_{[\alpha,\,c_2]}(s),
			\end{align*}
			and	$\Vert \nabla r(\Bx) \Vert_{L^\infty(\RR^2)} < C$ for some constant $C$ independent of $\epsilon$.

			\item[\rm(b)] For $0<k_1, k_2 < 1$,	we have	
			\begin{align*}
			u(\Bx) = H(\Bx) + \frac{C_y r_*^2}{2} \left[ \int_{[\Bc_1, \Bp_1]} \frac{ \langle \Bx-\Bs, \hat{\bold{e}}_y \rangle}{|\Bx-\Bs|^2} \psi_1 (\Bs)\, d\sigma(\Bs) + \int_{[\bold{p}_2,\Bc_2]} \frac{ \langle \Bx-\Bs, \hat{\bold{e}}_y \rangle}{|\Bx-\Bs|^2} \psi_2 (\Bs)\, d\sigma(\Bs) \right]+ r(\Bx) , 
			\end{align*}
			where the charge distributions are
			\begin{align*}
			\psi_1 (s) & = (\tau + \tau_1) e^{2\beta\xi_1}  \left(\frac{s+\alpha}{s-\alpha}\right)^\beta \chi_{[c_1,-\alpha]}(s),\\
			\psi_2 (s) & = (\tau + \tau_2) e^{2\beta\xi_2}  
			\left(\frac{s-\alpha}{s+\alpha}\right)^\beta \chi_{[\alpha,\,c_2]}(s),
			\end{align*}
			and
			$\Vert \nabla r(\Bx) \Vert_{L^\infty(\RR^2)} < C$ for some constant $C$ independent of $\epsilon$.
		\end{itemize}
	\end{theorem}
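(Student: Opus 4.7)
The plan is to apply Lemma \ref{Lerch:xy} to each Lerch transcendent function appearing in the preceding lemma's formula \eqnref{u:singular_disjoint} for $\widetilde{q}$ in the exterior region, and to match the resulting single and double layer potentials with those claimed in the theorem.

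In $\RR^2 \setminus \overline{(B_1 \cup B_2)}$ the singular function reads
$$\widetilde{q}(\Bx;\beta,\tau_1,\tau_2) = \tfrac{1}{2}\bigl[(\tau+\tau_1)L(e^{-(\xi+\rm{i}\theta)-2\xi_1};\beta) - (\tau+\tau_2)L(e^{(\xi+\rm{i}\theta)-2\xi_2};\beta)\bigr],$$
with $\xi_1<0<\xi_2$. I would apply Lemma \ref{Lerch:xy}(a) with $\xi_0=\xi_2$ to the second Lerch function and the reflection-symmetric analogue of Lemma \ref{Lerch:xy}(b) (taking $\xi_0=|\xi_1|$ with the disk $B_1$ on the left) to the first. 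Each Lerch transcendent then decomposes into a single layer integral on $[\Bc_1,\Bp_1]$ or $[\Bp_2,\Bc_2]$ (the real part), minus $\rm{i}$ times a double layer integral on the same segment (the imaginary part), plus a remainder of order $1/r_j$ in gradient. Because of the splitting in \eqnref{u:singular_disjoint}, the term $C_x r_*^2 \Re\{\widetilde{q}(\cdot;\beta,\tau_1,\tau_2)\}$ contributes only single layer potentials, with densities proportional to $(\tau+\tau_j)\cdot 2\alpha\beta e^{2\beta|\xi_j|}(s\mp\alpha)^{\beta-1}/(s\pm\alpha)^{\beta+1}$ (matching $\varphi_1,\varphi_2$), while $C_y r_*^2 \Im\{\widetilde{q}(\cdot;\beta,-\tau_1,-\tau_2)\}$ contributes only double layer potentials, with densities proportional to $(\tau-\tau_j)\cdot e^{2\beta|\xi_j|}\bigl((s\mp\alpha)/(s\pm\alpha)\bigr)^\beta$ (matching $\psi_1,\psi_2$).

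For case (a) with $\tau_1,\tau_2\in(0,1)$ close to $1$, the coefficient $(\tau+\tau_j)\to 2$ gives the claimed dominant single layer term, while the $C_y$ double layer has coefficient $\tau-\tau_j=\tau_j(\tau_{3-j}-1)$ satisfying $|\tau-\tau_j| \le -\ln\tau$. Combined with the $O(1/\beta)$ gradient bound on the corresponding double layer (analogous to \eqnref{Pproperty1}) and the scaling $\beta\sim -\ln\tau/\sqrt\epsilon$, the $C_y$ piece is uniformly bounded and absorbed into $r$. A symmetric argument with $\tau_1,\tau_2\in(-1,0)$ close to $-1$ handles case (b), where the dominant term is now the $C_y$ double layer (coefficient $\tau-\tau_j\to 2$) and the $C_x$ single layer is the small one. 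The blow-up dichotomy follows from the same estimates: if $\tau$ stays bounded away from $1$ then $\beta\gtrsim 1/\sqrt\epsilon$, so every $P$-type gradient is $O(1)$ by the bounds used in the proof of Theorem \ref{blowupterm}.

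The hard part will be the boundedness argument for the non-dominant layer potential. The density factor $\bigl((s\pm\alpha)/(s\mp\alpha)\bigr)^\beta$ itself may diverge near $\pm\alpha$ when $\beta<1$, so one cannot conclude by bounding the density pointwise and then integrating against the kernel separately. Instead one must stay at the level of the Lerch transcendent (or equivalently the $P$-function), and exploit the smallness of $(\tau-\tau_j)$ together with the $O(1/\beta)$ bound for $\nabla P$ simultaneously, essentially re-running the argument of Theorem \ref{blowupterm} before converting back to line charges.
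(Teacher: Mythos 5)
Your proposal is correct and follows essentially the same route as the paper, whose proof consists of exactly this: substituting the line-charge representation of Lemma \ref{Lerch:xy} into the bipolar formula \eqnref{u:singular_disjoint} and absorbing the non-dominant layer potential into $r(\Bx)$ via the estimate $|\tau-\tau_j|\le|\ln\tau|$ combined with the $O(1/(\alpha\beta))$ bound of type \eqnref{Pproperty1}, carried out at the level of the $P$-function exactly as you describe. I note only that your derivation gives the coefficient $\tau-\tau_j$ for the double-layer densities in case (b) (since the $C_y$ term uses $\widetilde{q}(\cdot;\beta,-\tau_1,-\tau_2)$), which is the correct value; the $(\tau+\tau_j)$ printed in the theorem appears to be a sign typo, so this discrepancy is not a flaw in your argument.
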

	
	One can derive a similar line charge formula for $u$ in $B_1\cup B_2$ by use of the line charge formula of the Lerch transcendent function in section \ref{sec:lerch}.

	
	\begin{figure}[!]
		\centering
		\includegraphics[width=7cm,height=4cm,trim={2cm 3cm 2cm 2.5cm}, clip]{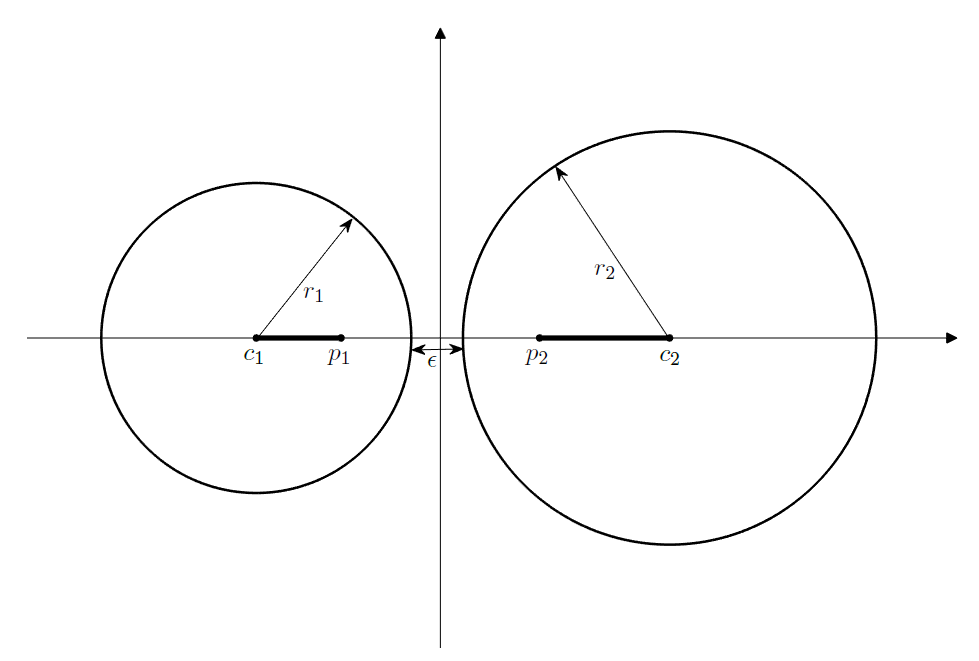}
		\caption{Geometry of two separated disks. Bold intervals indicate the support of line charges.}\label{fig:support:separated}
		\begin{subfigure}{1\textwidth}
			\centering
			\adjincludegraphics[width=5cm, height=3.75cm, trim={.8cm .5cm .5cm .4cm}, clip]{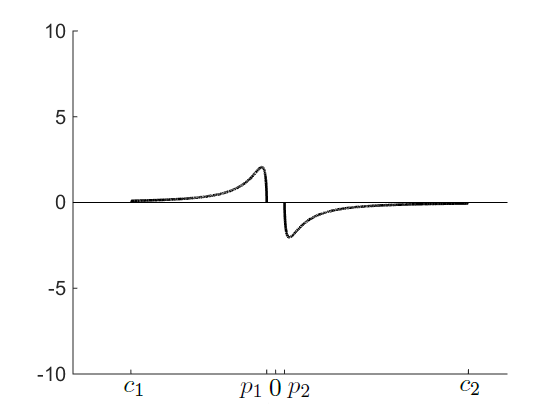}
			\hskip .5cm
			\adjincludegraphics[width=5cm, height=3.75cm, trim={.8cm .5cm .5cm .4cm}, clip]{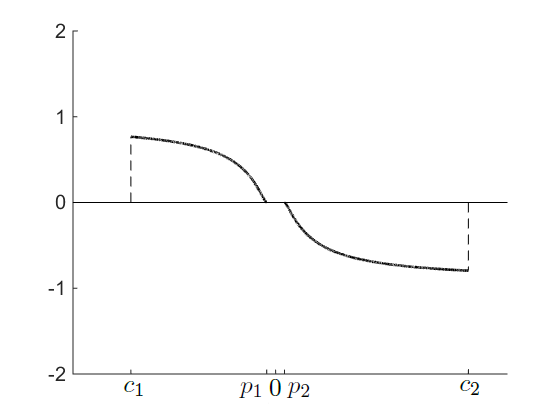}
			\caption{$k_1 = 10,\, k_2 = 15$ (left) and $k_1=1/10,\, k_2 = 1/15$ (right). $ \ep = 0.005$ ($\beta = 1.5471, \tau = 0.7159$)}
		\end{subfigure}	
		\begin{subfigure}{1\textwidth}
			\centering
			\adjincludegraphics[width=5cm, height=3.75cm, trim={.8cm .5cm .5cm .4cm}, clip]{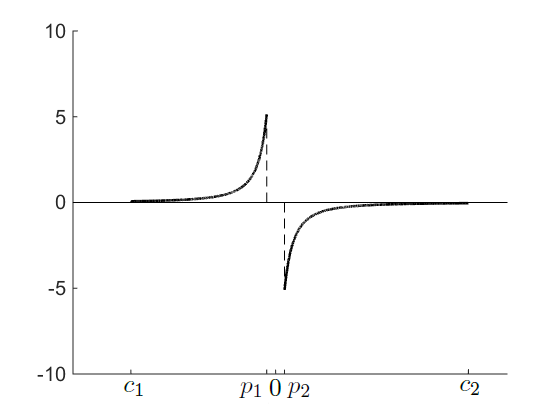}
			\hskip .5cm
			\adjincludegraphics[width=5cm, height=3.75cm, trim={.8cm .5cm .5cm .4cm}, clip]{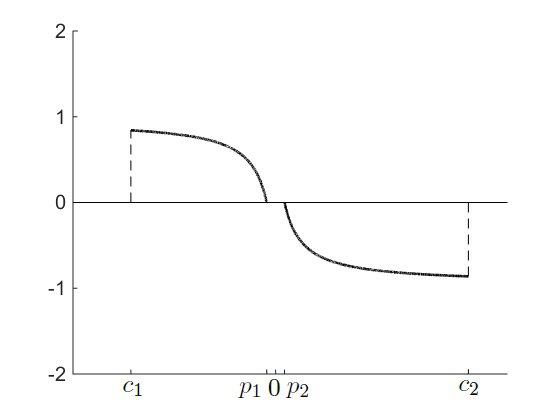}
			\caption{$k_1= 15.4471,\, k_2 = 23.1707$ (left) and $k_1=1/ 15.4471,\, k_2 =1/ 23.1707$ (right). $\ep = 0.005$ ($\beta=1, \tau = 0.8057$)}
		\end{subfigure}	
		\begin{subfigure}{1\textwidth}
			\centering
			\adjincludegraphics[width=5cm, height=3.75cm, trim={.8cm .5cm .5cm .4cm}, clip]{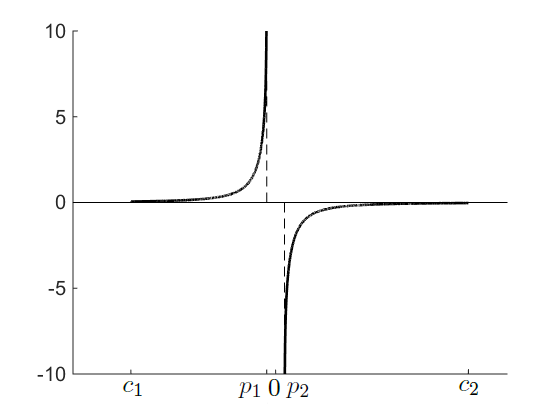}
			\hskip .5cm
			\adjincludegraphics[width=5cm, height=3.75cm, trim={.8cm .5cm .5cm .4cm}, clip]{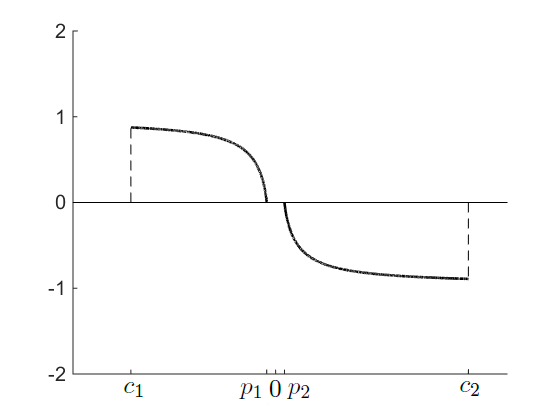}
			\caption{$k_1=20,\, k_2=30$ (left) and $k_1=1/20,\, k_2=1/30$ (right). $\ep = 0.005$ ($\beta = 0.7720, \tau = 0.8464 $)}
		\end{subfigure}	
		\caption{Image line charges for the two separated disks with $r_1=1.5,\, r_2 =2,\, \ep=0.005$ and various $k_1,k_2$-values. 
			The left column shows $\frac{1}{2}(\varphi_1-\varphi_2)$, and the right column shows $\frac{1}{2}(\psi_1-\psi_2)$ in Theorem \ref{thm:imagecharge:separate}. 
		}\label{fig:separated}
	\end{figure}

	\begin{figure}[!]
		\begin{center}
			\begin{subfigure}{1\textwidth}
				\centering
				\adjincludegraphics[width=5cm, height=3.75cm, trim={.8cm .5cm .5cm .4cm}, clip]{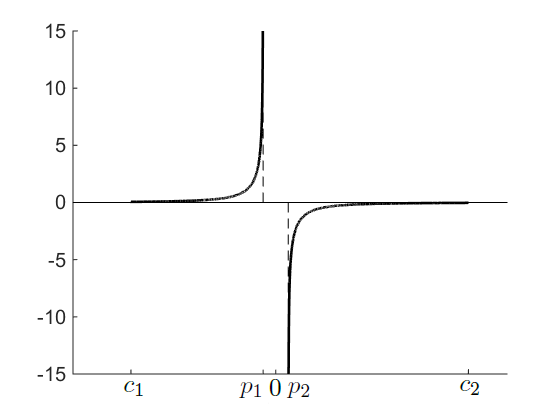}
				\hskip .5cm
				\adjincludegraphics[width=5cm, height=3.75cm, trim={.8cm .5cm .5cm .4cm}, clip]{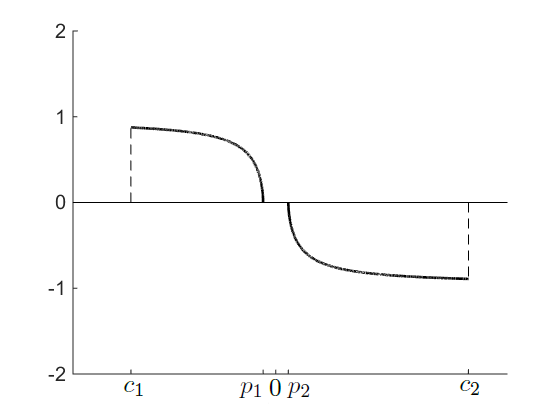}
				\caption{$\ep=0.01$ ($\beta = 0.5459, \tau = 0.8464$)}
			\end{subfigure} 
			\begin{subfigure}{1\textwidth}
				\centering
				\adjincludegraphics[width=5cm, height=3.75cm, trim={.8cm .5cm .5cm .4cm}, clip]{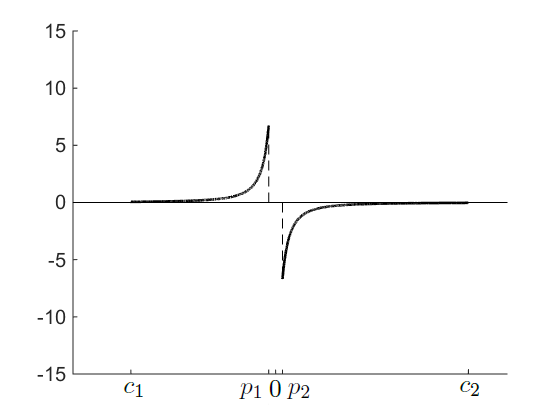}
				\hskip .5cm
				\adjincludegraphics[width=5cm, height=3.75cm, trim={.8cm .5cm .5cm .4cm}, clip]{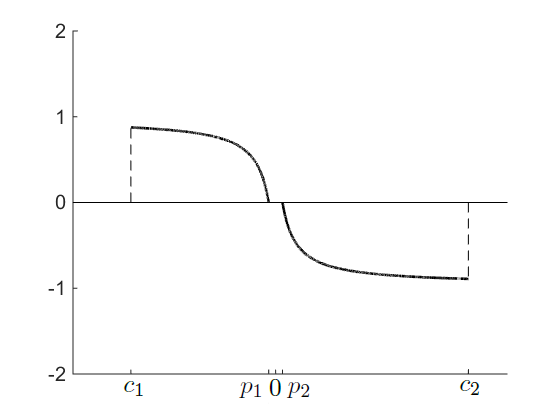}
				\caption{$\ep= 0.0030$ ($\beta=1, \tau = 0.8464$)}
			\end{subfigure} 

			\begin{subfigure}{1\textwidth}
				\centering
				\adjincludegraphics[width=5cm, height=3.75cm, trim={.8cm .5cm .5cm .4cm}, clip]{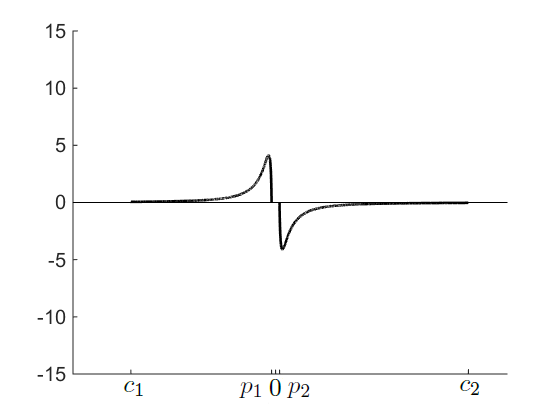}
				\hskip .5cm
				\adjincludegraphics[width=5cm, height=3.75cm, trim={.8cm .5cm .5cm .4cm}, clip]{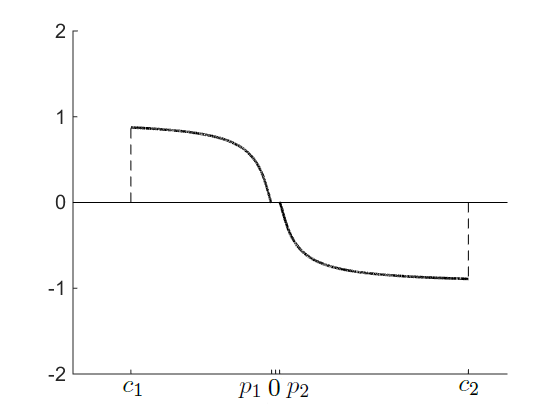}
				\caption{$ \ep=0.001$ ($\beta = 1.7263, \tau = 0.8464$)}
			\end{subfigure} 

			\caption{Image line charges for the two separated disks with $r_1=1.5,\, r_2 =2$ and various $\ep$-values, where $k_1 = 20,\, k_2 = 30$ (left column) and $k_1 = 1/20,\, k_2 = 1/30$ (right column). The left column shows $\frac{1}{2}(\varphi_1-\varphi_2)$, and the right column shows $\frac{1}{2}(\psi_1-\psi_2)$ in Theorem \ref{thm:imagecharge:separate}. As $\ep$ decreases, $\beta$ increases so that $\frac{1}{2}(\varphi_1-\varphi_2)$ becomes bounded.}
		\end{center}
	\end{figure}

	
	\section{Conclusion}
	We have analyzed the gradient blow-up of the solution to the conductivity problem in the presence of an inclusion with core-shell geometry. We showed that the gradient blow-up happens only when the conductivities of the core and shell satisfy the conditions $k_e<1$ and $k_e<k_i$. We then derived an asymptotic formula for the potential function in terms of the bipolar coordinates. We finally expressed the asymptotic formula in terms of the single and double layer potentials with line charges. It will be of interest to generalize the result to core-shell geometry with arbitrary shapes, or to the elasticity problem.


\end{document}